\providecommand\@dotsep{5}
\def\listtodoname{List of Todos}
\def\listoftodos{\@starttoc{tdo}\listtodoname}
\newcommand{\LA}{\ensuremath{\mathcal{LA}}} 
\newcommand{\hh}{\mathfrak{h}}              
\renewcommand{\gg}{\mathfrak{g}}            
\newcommand{\ggl}{\mathfrak{gl}}            
\newcommand{\Lie}{\mathcal{L}}              
\DeclareMathOperator{\ad}{ad}               
\newcommand{\A}{\mathcal{A}}                
\newcommand{\Rr}{\mathbb R}					
\providecommand{\abs}[1]{\lvert#1\rvert}
\newtheorem{theorem}{Theorem}[section]
\newtheorem{lemma}[theorem]{Lemma}
\newtheorem{prop}[theorem]{Proposition}
\newtheorem{remark}[theorem]{Remark}
\theoremstyle{definition} \newtheorem{ex:}[theorem]{Example}
\theoremstyle{definition} \newtheorem{Def}[theorem]{Definition}
\newtheorem{Ack}[theorem]{Acknowledgements}
\newcommand{\grid}[1]{
\foreach \i [count=\row from 0, remember=\row as \lastrow (initially 0)] in {0,...,#1}{
    \foreach \j [count=\column from 0, remember=\column as \lastcolumn (initially 0)] in {0,...,\i}{
        \ifnum\row=0
            \node[tri](0-0){0-0};
        \else
            \ifnum\column=0
                \node[tri, anchor=north](\row-0) at (\lastrow-0.corner 2) {\row-0};
            \else
                \node[tri, anchor=north](\row-\column) at (\lastrow-\lastcolumn.corner 3) {\row-\column};
            \fi
        \fi}}
}
\title{A cohomology theory for Lie $2$-algebras}
\author{Camilo Angulo}
\date{\today}
\begin{document}
\maketitle

\begin{abstract}
In this article, we introduce a new cohomology theory associated to a Lie 2-algebras. This cohomology theory is shown to extend
the classical cohomology theory of Lie algebras; in particular, we show that the second cohomology group classify an appropriate type of extensions. 
\end{abstract}

\section{Introduction}
\label{sec:introduction}

Lie algebra-like structures have been paid much attention in the literature. In particular, categorifications of these have been considered in \cite{Lie2Gps,Lie2Alg,ZhuInt2Alg}. The objects we are going to study are known as strict Lie $2$-algebras in some parts of the literature. Examples of these are ideals of Lie algebras, classic representations of Lie algebras and, in the infinite dimensional case, the spaces of multiplicative vector fields of a Lie groupoid. In this sense, Lie $2$-algebras play a part in studying algebraic structures on quotients. 

In this article, we introduce a cohomology theory for Lie $2$-algebras. After going through some preliminaries, we define the notion of a representation of a Lie $2$-algebra with values on a $2$-vector space. We define a representation proceeding along the lines of the general philosophy: ``A representation of an object in a category $\mathcal{C}$ is a morphism to the space of endomorphisms of a flat abelian object in $\mathcal{C}$''.
We justify this approach by means of the following examples
\begin{itemize}
\item Vector spaces are flat abelian Lie groups, and representations of Lie groups are indeed maps to $GL(V)$.
\item Vector spaces are also abelian Lie algebras, and representations of Lie algebras are maps to $\ggl (V)$.
\item Vector bundles are abelian Lie groupoids whose $s$-fibres are flat, and a representation of a Lie groupoid is a map to $\xymatrix{GL(E) \ar@<0.5ex>[r] \ar@<-0.5ex>[r] & M}$.
\end{itemize}
Such spaces of endomorphisms might as well not be objects in the given category; therefore, to make sense of a representation when adopting this philosophy, one needs to ensure that the spaces of endomorphisms are indeed objects of $\mathcal{C}$.  We show that indeed this is the case for our representations and that these are the type of representations induced by an extension of a Lie $2$-algebra by a $2$-vector space. We define the complex pf Lie $2$-algebra cochains with values on these representations and prove that it is so.\\
In the last chapter we prove that the second cohomology group of the complex of Lie $2$-algebra cochains classifies extensions. We start out with an abstract extension and a splitting, and we write down the conditions for the naturally defined maps to build an extension back up. These ``cocycle equations'' are immediately given by the complex associated to the Lie $2$-algebras with values in a representation. Finally, we give an interpretation for the $0$th and first cohomology groups.
\begin{Ack}
The author would like to acknowledge the financial support of CAPES and CNPq, with which these research was carried out.
\end{Ack}

\section{Preliminaries}


We start by recalling the definition of a Lie $2$-algebra.
\begin{Def}
A \textit{Lie $2$-algebra} is a groupoid object internal to the category of Lie algebras. 
\end{Def}
We will write a generic Lie $2$-algebra as
\begin{eqnarray}\label{ALie2Alg}
\xymatrix{
\gg_1\times_\hh \gg_1 \ar[r]^{\quad \hat{m}} & \gg_1 \ar@<0.5ex>[r]^{\hat{s}} \ar@<-0.5ex>[r]_{\hat{t}} \ar@(l,d)[]_{\hat{\iota}} & \hh \ar[r]^{\hat{u}} & \gg_1 .
}
\end{eqnarray}

It has been noted in \cite{Lie2Gps,Lie2Alg,ZhuInt2Alg} and elsewhere that the category of Lie $2$-algebras is equivalent to the category of crossed modules of Lie algebras.
\begin{Def}
A \textit{crossed module of Lie algebras} is a Lie algebra morphism $\xymatrix{\gg \ar[r]^\mu & \hh}$ together with a Lie algebra action by derivations $\xymatrix{\Lie :\hh \ar[r] & \ggl (\gg )}$ satisfying
\begin{eqnarray*}
\mu(\Lie _y x) & = & [y, \mu (x)] ,\\
\Lie _{\mu(x_0)}x_1 & = & [x_0 ,x_1 ].
\end{eqnarray*} 
We will refer to these equations as equivariance and infinitesimal Peiffer respectively.
\end{Def} 
The equivalence between the category of Lie $2$-algebras and crossed modules of Lie algebras is given, at the level of objects, by the following. Associated to a crossed module $\xymatrix{\gg \ar[r]^\mu & \hh}$, the space of arrows of the Lie $2$-algebra is defined to be the semi-direct sum $\gg\oplus_\Lie\hh$, where the bracket is given by the usual formula
\begin{eqnarray*}
[(x_0 ,y_0 ),(x_1 ,y_1 )]_\Lie:=([x_0 ,x_1 ]+\Lie _{y_0}x_1 -\Lie _{y_1}x_0 ,[y_0 ,y_1 ]).
\end{eqnarray*}
The structural maps are given by 
\begin{align*}
\hat{s}(x,y)=y & \qquad\hat{t}(x,y)=y+\mu (x) & \hat{\iota}(x,y)=(-x,y+\mu (x)) & \qquad\hat{u}(y)=(0,y)            
\end{align*}
\begin{eqnarray*}
(x',y+\mu (x))\Join (x,y):=\hat{m}(x',y+\mu (x); x,y):=(x+x' ,y).
\end{eqnarray*}
Conversely, given a Lie $2$-algebra $\xymatrix{\gg_1 \ar@<0.5ex>[r] \ar@<-0.5ex>[r] & \hh}$, the associated crossed module is given by $\xymatrix{\ker\hat{s} \ar[r]^{\quad\hat{t}\vert_{\ker\hat{s}}} & \hh}$. The action for $\xi\in\ker \hat{s}$ and $y\in\hh$ is given by
\begin{eqnarray*}
\Lie _y\xi :=[\hat{u}(y),\xi]_1,
\end{eqnarray*} 
where $[,]_1$ is the bracket of $\gg_1$.\\
From here on out, we make no distinction between a Lie $2$-algebra and its associated crossed module.\\
\begin{ex:}
Lie algebras are Lie $2$-algebras whose associated crossed modules are \begin{eqnarray*}
\xymatrix{
(0) \ar[r] & \gg .
}
\end{eqnarray*}
\end{ex:}
\begin{ex:}
Ideals $\hh\trianglelefteq\gg$ are examples whose associated crossed modules are given by the inclusions 
\begin{eqnarray*}
\xymatrix{
\hh \ar[r] & \gg
}
\end{eqnarray*}
together with the adjoint action.
\end{ex:}
\begin{ex:}
classical representations of Lie algebras on a vector space $V$; the associated crossed modules being
\begin{eqnarray*}
\xymatrix{
V \ar[r]^0 & \gg ,
}
\end{eqnarray*}
where the actions are the ones given by the representations themselves.
\end{ex:}
We will see shortly that indeed these are morally all the examples, as any other will be a suitable combination of these. Before we move to that, a crucial infinite dimensional example.
\begin{ex:}
The Lie $2$-algebra of multiplicative vector fields on a Lie groupoid \cite{EugLer}
\end{ex:}
This latter example was noticed to be part of a larger phenomenon \cite{CristJames}; namely, that the category of functor-sections of an \LA -groupoid and their natural transformations always comes endowed with the structure of a Lie $2$-algebra. \\
   
Out of the crossed modules, one is able to read much of the groupoid theoretical data of a Lie $2$-algebra. Indeed, let $\gg_1$ be a Lie $2$-algebra with associated crossed module $\xymatrix{\gg \ar[r]^\mu & \hh}$ with action $\Lie$, then we have got the following collection of trivial observations.
\begin{lemma}
The orbit through $0$ is $\mu(\gg)$ and an ideal in $\hh$.
\end{lemma}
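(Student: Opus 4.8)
The plan is to split the statement into its two assertions and dispatch each by a direct computation from the crossed-module data, since both are essentially bookkeeping with the structural maps and the two crossed-module axioms.

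First, for the identification of the orbit, I would recall that in the groupoid $\gg\oplus_\Lie\hh \rightrightarrows \hh$ the orbit through a base point is the set of all targets of arrows emanating from that point. An arrow $(x,y)$ has $\hat{s}(x,y)=y$, so the arrows out of $0$ are exactly those of the form $(x,0)$ with $x\in\gg$; their targets are $\hat{t}(x,0)=0+\mu(x)=\mu(x)$. Hence the points reachable from $0$ by a single arrow form the set $\mu(\gg)$. To conclude that this is the full orbit I would note that $\mu(\gg)$ is a linear subspace (as $\mu$ is linear), so the equivalence relation generated by arrows cannot leave it; equivalently, composing arrows out of $0$ only lands again in $\mu(\gg)$. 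This gives that the orbit through $0$ equals $\mu(\gg)$.

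Next, to see that $\mu(\gg)$ is an ideal I would check the subalgebra and absorption properties separately. It is a subalgebra because $\mu$ is a morphism of Lie algebras, so $[\mu(x_0),\mu(x_1)]=\mu([x_0,x_1])\in\mu(\gg)$. For absorption, take $y\in\hh$ and $\mu(x)\in\mu(\gg)$; the equivariance axiom $\mu(\Lie_y x)=[y,\mu(x)]$ shows that $[y,\mu(x)]=\mu(\Lie_y x)\in\mu(\gg)$, since $\Lie_y x\in\gg$. Together these give $[\hh,\mu(\gg)]\subseteq\mu(\gg)$, so $\mu(\gg)$ is an ideal.

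I do not expect a genuine obstacle here; the only point requiring a moment's care is the passage from ``reachable by a single arrow'' to ``orbit'', where one must invoke linearity of $\mu$ (or closure of $\mu(\gg)$ under composition of arrows) to be sure that no further points appear. Everything else is an immediate application of the linearity of $\mu$, its morphism property, and the equivariance axiom.
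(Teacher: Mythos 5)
Your proof is correct and follows essentially the same route as the paper's: compute $\hat{t}(\hat{s}^{-1}(0))=\mu(\gg)$ using the explicit structural maps, and deduce the ideal property from the equivariance axiom $\mu(\Lie_y x)=[y,\mu(x)]$. The only differences are cosmetic — the paper takes the orbit to be $\hat{t}(\hat{s}^{-1}(0))$ by definition (so no closure argument is needed), and it does not bother to spell out the subalgebra property, which as you note is immediate from $\mu$ being a Lie algebra morphism.
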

\begin{proof}
By definition, the orbit through $0$ is $\hat{t}(\hat{s}^{-1}(0))$. Now, $\hat{s}(x,y)=y$; thus, $\hat{s}^{-1}(y)=\gg\times\lbrace y\rbrace$. On the other hand, $\hat{t}(x,y)=y+\mu(x)$; therefore, $\hat{t}(\hat{s}^{-1}(0))=\hat{t}(\gg\times (0))=\mu(\gg)$ as claimed. As for the second part of the statement, the equivariance of $\mu$ yields, $[y,\mu(x)]=\mu(\Lie_yx)\in\mu(\gg)$.

\end{proof}
\begin{lemma}
The orbit through any point $y\in\hh$ is the coset of $\mu(\gg)$ in $\hh$ with respect to $y$; thus, every Lie $2$-algebra is regular and its leaf space is the Lie algebra $\hh/\mu(\gg)$.
\end{lemma}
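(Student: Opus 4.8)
The plan is to reuse the source-fibre computation from the preceding lemma and then reinterpret the resulting family of orbits geometrically. First I would recall that the orbit through $y\in\hh$ is by definition $\hat{t}(\hat{s}^{-1}(y))$. Since $\hat{s}(x,y)=y$, exactly as in the previous proof one has $\hat{s}^{-1}(y)=\gg\times\lbrace y\rbrace$. Applying the target map $\hat{t}(x,y)=y+\mu(x)$ then gives
\[
\hat{t}(\gg\times\lbrace y\rbrace)=\lbrace y+\mu(x):x\in\gg\rbrace=y+\mu(\gg),
\]
which is precisely the coset of $\mu(\gg)$ in $\hh$ determined by $y$. This establishes the first assertion with essentially no extra work beyond the earlier lemma.

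For the regularity claim I would observe that every orbit is an affine translate of the \emph{fixed} linear subspace $\mu(\gg)\subseteq\hh$. Hence all orbits have the same dimension $\dim\mu(\gg)$ and partition $\hh$ into parallel affine subspaces; this is exactly a regular foliation, i.e.\ the statement that the Lie $2$-algebra is regular. Finally, for the leaf space, the cosets $y+\mu(\gg)$ are by construction the elements of the quotient $\hh/\mu(\gg)$, so the orbit space is $\hh/\mu(\gg)$ as a set. To promote it to a Lie algebra I would invoke the previous lemma, which shows $\mu(\gg)$ is an ideal in $\hh$; the quotient of a Lie algebra by an ideal inherits a well-defined bracket, so $\hh/\mu(\gg)$ is a Lie algebra.

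I do not expect any genuine obstacle: the content reduces to a one-line computation of $\hat{t}\circ\hat{s}^{-1}$ together with the already-established fact that $\mu(\gg)\trianglelefteq\hh$. The only point deserving a word of care is making precise what \emph{regular} means in this setting—namely that the orbits are equidimensional and assemble into a smooth foliation—which here is automatic because they are affine translates of a single fixed subspace, so no transversality or constancy-of-rank argument is actually required.
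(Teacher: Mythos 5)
Your proof is correct and follows exactly the computation the paper intends: the paper leaves this lemma unproved (it is listed among the ``trivial observations''), and your argument is precisely the evident extension of the preceding lemma's proof, where $\hat{s}^{-1}(y)=\gg\times\lbrace y\rbrace$ and $\hat{t}(x,y)=y+\mu(x)$ already appear, combined with the fact that $\mu(\gg)$ is an ideal. Nothing further is needed.
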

\begin{remark}
One could be interested in Lie $2$-algebras precisely because they serve as models to study quotients. This is better seen in the case of their global counter-parts, Lie $2$-groups, when the topology comes into play. Simple examples as the inclusion of an irrational slope subgroup in the torus, whose quotient is simultaneously a group and a badly behaved topological space, can be studied using tools of differential geometry when looking at its Lie $2$-group instead. 
\end{remark} 
\begin{lemma}
The isotropy group of $\gg_1$ at $0$ is the central Lie subalgebra $\ker\mu$. All other isotropy groups for $y\in\hh$ are isomorphic.
\end{lemma}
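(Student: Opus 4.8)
The plan is to compute the isotropy directly from the semidirect-sum description of the arrows and then read off both assertions from the two crossed-module identities. Recall that an element of the isotropy group at $y$ is an arrow $g\in\gg_1$ with $\hat{s}(g)=\hat{t}(g)=y$. Writing $g=(x,y')$, the formula $\hat{s}(x,y')=y'$ forces $y'=y$, while $\hat{t}(x,y)=y+\mu(x)$ together with $\hat{t}(g)=y$ forces $\mu(x)=0$. Hence the isotropy at $y$ is exactly $\lbrace (x,y):x\in\ker\mu\rbrace$; in particular the isotropy at $0$ is $\lbrace (x,0):x\in\ker\mu\rbrace$, which I would identify with $\ker\mu$ via $(x,0)\mapsto x$.

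For the centrality claim I would invoke the infinitesimal Peiffer identity $\Lie_{\mu(x)}x_1=[x,x_1]$: taking $x\in\ker\mu$ makes the left-hand side $\Lie_0 x_1=0$, so $[x,x_1]=0$ for every $x_1\in\gg$. Thus $\ker\mu$ sits inside the centre of $\gg$, which is the first assertion; equivalently, the bracket of $\gg_1$ restricted to the subspace $\lbrace(x,0):x\in\ker\mu\rbrace$ vanishes, so it is an abelian (indeed central) Lie subalgebra.

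For the second assertion I would use the groupoid composition. Given two isotropy elements $(x,y)$ and $(x',y)$ over the same $y$, both satisfy $\mu(x)=\mu(x')=0$, so they are composable and $\hat{m}$ reads $(x',y)\Join(x,y)=(x+x',y)$. Therefore each isotropy group, with its groupoid multiplication, is isomorphic to the additive group $(\ker\mu,+)$ independently of $y$; in particular all of them are mutually isomorphic and isomorphic to the isotropy at $0$.

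I do not expect a genuine obstacle: the statement is essentially bookkeeping once the structure maps are written out, consistent with its billing as a trivial observation. The one point deserving care is being explicit about which structure the isotropy carries. It is a group under groupoid composition, abelian and equal to $(\ker\mu,+)$, whereas the centrality statement concerns the Lie bracket of $\gg$; I would keep these two distinct so as not to conflate the arrow bracket of $\gg_1$ with the composition product, and I would note that the reason \emph{all} isotropy groups coincide — not merely those over a common orbit, as general groupoid theory would give — is the specific linear form of the structure maps.
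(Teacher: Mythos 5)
Your proposal is correct and follows essentially the same route as the paper: identify the isotropy at $y$ as $\ker\mu\times\{y\}$ by intersecting the source and target fibres, and deduce centrality from the infinitesimal Peiffer identity $[x,x_1]=\Lie_{\mu(x)}x_1=0$ for $x\in\ker\mu$. Your explicit remarks on the groupoid composition $(x',y)\Join(x,y)=(x+x',y)$ and on keeping the composition product distinct from the Lie bracket are a welcome clarification of what the paper leaves as ``the result follows,'' but they do not constitute a different argument.
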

\begin{proof}
First, notice that if $x_1,x_2\in\ker\mu$, by the infinitesimal Peiffer equation, $[x_1,x_2]=\Lie_{\mu(x_1)}x_2=0$; hence, $\ker\mu$ is indeed an abelian Lie algebra. By the same equation, taking $x_2$ an arbitrary element of $\gg$, it is central. Now, by definition the isotropy at $y$ is the intersection of its source and target fibres. Since, $\hat{t}^{-1}(y)=\lbrace(x,y')\in\gg\oplus\hh :y'+\mu(x)=y\rbrace$, $(\gg_1)_y=\lbrace (x,y)\in\gg\oplus\hh :y-\mu(x)=y\rbrace=\ker\mu\times\lbrace y\rbrace$ and the result follows.

\end{proof}
\begin{lemma}
The associated action of $\hh$ on the second isotropy is the honest representation defined by the restriction of $\Lie$. Moreover, there is an honest representation $\overline{\Lie}$ of the orbit space on $\ker\mu$ of which the previous representation is a pull-back.
\end{lemma}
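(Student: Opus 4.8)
The plan is to read off both assertions directly from the two defining equations of the crossed module, using the structural facts already established in the preceding lemmas. Throughout I identify the second isotropy with $\ker\mu$, recalling that by the isotropy lemma $\ker\mu$ is a central (in particular abelian) subalgebra of $\gg$, while by the first lemma $\mu(\gg)$ is an ideal of $\hh$ whose quotient $\hh/\mu(\gg)$ is the orbit (leaf) space.

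First I would establish that $\Lie$ restricts to $\ker\mu$. For $y\in\hh$ and $\xi\in\ker\mu$, equivariance gives $\mu(\Lie_y\xi)=[y,\mu(\xi)]=[y,0]=0$, so $\Lie_y\xi\in\ker\mu$. Thus the map $\hh\to\ggl(\ker\mu)$, $y\mapsto\Lie_y\rest{\ker\mu}$, is well defined; being the restriction of the Lie algebra morphism $\Lie\colon\hh\to\ggl(\gg)$ to an invariant subspace, it is itself a Lie algebra morphism, i.e.\ an honest representation. To match this with the action of the groupoid on its isotropy, I would recall that the latter is computed by $\Lie_y\xi=[\hat u(y),\xi]_1$ for $\xi\in\ker\hat s$, and observe that restricting this formula to $\ker\mu\subseteq\ker\hat s$ reproduces exactly $\Lie_y\rest{\ker\mu}$; this settles the first assertion.

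For the second assertion I would define $\overline{\Lie}\colon\hh/\mu(\gg)\to\ggl(\ker\mu)$ by $\overline{\Lie}_{[y]}:=\Lie_y\rest{\ker\mu}$ and check it is well defined, i.e.\ that $\mu(\gg)$ acts trivially on $\ker\mu$. This is the only step with genuine content: for $x\in\gg$ and $\xi\in\ker\mu$, the infinitesimal Peiffer identity gives $\Lie_{\mu(x)}\xi=[x,\xi]$, and since $\ker\mu$ is central this bracket vanishes. Hence $\Lie_y\rest{\ker\mu}$ depends only on the class $[y]\in\hh/\mu(\gg)$, and because $\mu(\gg)$ is an ideal the quotient is a Lie algebra on which $\overline{\Lie}$ is a morphism, being induced by $\Lie\rest{\ker\mu}$ through the projection $\pi\colon\hh\to\hh/\mu(\gg)$. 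By construction $\Lie_y\rest{\ker\mu}=\overline{\Lie}_{\pi(y)}$, which is precisely the statement that the representation of the first part is the pull-back of $\overline{\Lie}$ along $\pi$.

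I expect no real obstacle here: both parts reduce to one-line applications of equivariance and of the infinitesimal Peiffer identity. The only point requiring care is the well-definedness of $\overline{\Lie}$, where one must combine Peiffer with the centrality of $\ker\mu$; I would invoke the previously established centrality rather than re-derive it, and would state explicitly that the ``pull-back'' in the lemma means factorization of $\Lie\rest{\ker\mu}$ through $\pi$.
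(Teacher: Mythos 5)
Your proposal is correct and follows essentially the same route as the paper: equivariance shows $\Lie$ preserves $\ker\mu$, and the infinitesimal Peiffer identity shows $\mu(\gg)$ acts trivially so that $\overline{\Lie}$ descends to the quotient. The only cosmetic difference is that you invoke the previously established centrality of $\ker\mu$ to kill $[x,\xi]$, whereas the paper re-derives this on the spot via a second application of Peiffer ($[x,v]=-\Lie_{\mu(v)}x=0$); these are the same computation.
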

\begin{proof}
We just need to prove that the representation is well-defined. Let $x\in\ker\mu$ and $y\in\hh$, then by equivariance, $\mu(\Lie_yx)=[y,\mu(x)]=0$. As for the second statement, define $\overline{\Lie}_{[y]}v=\Lie_yv$. It is well-defined too, as for any other representative $y'=y+\mu(x)$, $\Lie_{y'}v=\Lie_yv +\Lie_\mu(x)v$ and the infinitesimal Peiffer equation implies $\Lie_\mu(x)v=[x,v]=-[v,x]=-\Lie_{\mu(v)}x=0$. The rest follows trivially.

\end{proof}
These observations make clear that the data of a crossed module of Lie algebras can be reduced to a $4$-tuple $(\hh,I,V,\rho)$ of a Lie algebra $\hh$, an ideal $I\trianglelefteq\hh$, a vector space $V$ and a representation of $\hh/I$ on $V$. The crossed module of such a $4$-tuple is
\begin{eqnarray}
\xymatrix{
V\oplus I \ar[r] & \hh:(v,x) \ar@{|->}[r] & x,
}
\end{eqnarray}
where the Lie algebra structure on $V\oplus I$ is the direct product, and the action $\Lie_y(v,x)=(\rho_{[y]}v,[y,x])$. \\
We move now to an example that plays an important r\^ole in what follows.

\subsection{The linear Lie 2-algebra associated to a 2-vector space}
The structure of the linear Lie $2$-algebra that we are about to describe has already appeared in \cite{IntSubLin2} and \cite{Cristian&Olivier}. In the former reference, the linear Lie $2$-algebra is a special case of the DGLA of endomorphisms of a complex of vector spaces, and in the latter, it coincides with the gauge 2-groupoid of a complex of vector spaces over the point manifold. We introduce this structure from a different perspective. \\
A $2$-vector space is the same as an abelian Lie $2$-algebra, that is a groupoid object in the category of vector spaces. Using the equivalence between Lie $2$-algebras and crossed modules, one realizes that a $2$-vector space is equivalent to a $2$-term complex of vector spaces
\begin{eqnarray*}
\xymatrix{
W \ar[r]^\phi & V.
}
\end{eqnarray*}
We claim that the category of linear self functors of $\xymatrix{W\oplus V \ar@<0.5ex>[r] \ar@<-0.5ex>[r] & V}$ together with their linear natural transformations is a Lie $2$-algebra that we will call the linear Lie $2$-algebra and will note $\ggl (\phi)$.\\
First, linear functors correspond to pairs of linear maps $(F,f)$ commuting with $\phi$, i.e.
\begin{eqnarray*}
\xymatrix{
W \ar[r]^F \ar[d]_{\phi} & W \ar[d]^{\phi} \\
V \ar[r]_f             & V.
}
\end{eqnarray*}
A natural transformation $\alpha$ between two functors $(F_1,f_1)$ and $(F_2,f_2)$ is a linear map 
\begin{eqnarray*}
\xymatrix{
\alpha : V \ar[r] & W\oplus V : v \ar@{|->}[r] & (\alpha_1(v),\alpha_2(v)),
}
\end{eqnarray*}
such that the $s(\alpha_1(v),\alpha_2(v))=\alpha_2(v)=f_1(v)$ and $t(\alpha_1(v),\alpha_2(v))=\alpha_2(v)+\phi(\alpha_1(v))=f_2(v)$ and such that for every $(w,v)\in W\oplus V$ the diagram
\begin{eqnarray*}
\xymatrix{
f_1(v) \ar[rrr]^{(F_1(w),f_1(v)) \qquad} \ar[d]_{(\alpha_1(v),\alpha_2(v))} & & & f_1(v)+\phi(F_1(w)) \ar[d]^{(\alpha_1(v+\phi(w)),\alpha_2(v+\phi(w)))} \\
f_2(v) \ar[rrr]_{(F_2(w),f_2(v)) \qquad}                                    & & & f_2(v)+\phi(F_2(w)).
}
\end{eqnarray*}
commutes inside the category $W\oplus V$. That is
\begin{eqnarray*}
\alpha_1(v)+F_2(w)=\alpha_1(v+\phi(w))+F_1(w);
\end{eqnarray*}
which, by linearity of $\alpha$, is
\begin{eqnarray*}
\alpha_1(\phi(w))=F_2(w)-F_1(w).
\end{eqnarray*}
Summing up, the space of objects of the category of endomorphisms of $\xymatrix{W \ar[r]^\phi & V}$, $\ggl (\phi )$, is
\begin{eqnarray*}
\ggl (\phi )_0 =\lbrace (F,f) \in End(W)\oplus End(V):\phi\circ F=f\circ\phi \rbrace 
\end{eqnarray*} 
which is a vector space. Arrows between $(F_1,f_1)$ and $(F_2,f_2)$ are given by  
\begin{eqnarray*}
\ggl (\phi )((F_1,f_1);(F_2,f_2))=\lbrace A \in Hom(V,W):\phi\circ A=f_2 -f_1 ,\quad A\circ\phi =F_2 -F_1 \rbrace ;
\end{eqnarray*} 
therefore, 
\begin{eqnarray*}
\ggl (\phi ) = Hom(V,W)\oplus \ggl (\phi )_0 .
\end{eqnarray*} 
The structural maps are:
\begin{align*}
s(A;F,f) & =(F,f)   & t(A;F,f)     & =(F+A\phi ,f+\phi A)\\
u(F,f)   & =(0;F,f) & \iota(A;F,f) & =(-A;F+A\phi ,f+\phi A)
\end{align*}
\begin{eqnarray*}
(A';F+A\phi ,f+\phi A)\ast_v (A;F,f):=(A+A';F,f).
\end{eqnarray*}
Naturally, the multiplication is given by the so-called vertical composition of natural transformations. Notice that, incidentally, we found out that the category of endomorphisms of a $2$-vector space is not only a category, but a $2$-vector space itself. In the sequel, we will be referring to its associated $2$-term complex, 
\begin{eqnarray*}
\xymatrix{
Hom(V,W) \ar[r]^{\quad\Delta} & \ggl (\phi)_0,
}
\end{eqnarray*} 
where $\Delta A=(A\phi ,\phi A)$.\\
As we pointed out, there is an additional algebraic structure on $\ggl (\phi)$ that will endow it with a Lie $2$-algebra structure. Indeed, there is a bracket on the space of objects inherited from $\ggl (W)\oplus\ggl (V)$. For $(F_1,f_1),(F_2,f_2)\in \ggl (\phi )_0$, a simple computation yields $([F_1,F_2],[f_1,f_2])\in \ggl (\phi )_0$. Indeed,
\begin{align*}
\phi\circ [F_1,F_2] & = \phi(F_1F_2 -F_2F_1)     \\
                    & = \phi F_1F_2 -\phi F_2F_1 \\ 
                    & = f_1\phi F_2 -f_2\phi F_1 \\
                    & = f_1f_2\phi -f_2f_1\phi = [f_1,f_2]\circ\phi ,
\end{align*}
thus proving $\ggl (\phi)_0$ is a Lie subalgebra.\\
On the other hand, looking at the horizontal composition of natural transformations, one gets 
\begin{align*}
((A;F,f)\ast_h(B;G,g))_v & = (B(fv+\phi Av),g(fv+\phi Av))\Join (G(Av),g(fv)) \\
                         & = ((Bf+B\phi A+GA)v,gf(v)),
\end{align*}
where $(A;F,f),(B;G,g)\in \ggl (\phi )$. This product is bi-linear and associative; therefore, it can be used to define a Lie bracket on $\ggl (\phi )$, 
\begin{eqnarray*}
[(A_1;F_1,f_1),(A_2;F_2,f_2)]:=(A_2;F_2,f_2)\ast_h(A_1;F_1,f_1)-(A_1;F_1,f_1)\ast_h(A_2;F_2,f_2).
\end{eqnarray*}
We prove that $\ggl (\phi )$ endowed with this structure is a Lie $2$-algebr by looking at the associated $2$-term complex, which will thus inherit the structure of a crossed module of Lie algebras with bracket 
\begin{eqnarray*}
[A_1,A_2]_\phi = A_1\phi A_2-A_2\phi A_1
\end{eqnarray*}
on $Hom(V,W)$, and action 
\begin{eqnarray*}
\Lie^{\phi}_{(F,f)}A = FA-Af 
\end{eqnarray*}
for $(F,f)\in \ggl (\phi )_0$ and $A\in\ggl(\phi)_1$. We write $\ggl(\phi)_1$ instead of $Hom(V,W)$ to emphasize the presence of the bracket.  
\begin{prop}
Along with this bracket and this action,
\begin{eqnarray*}
\xymatrix{
\ggl(\phi)_1 \ar[r]^{\Delta} & \ggl(\phi)_0
}
\end{eqnarray*} 
is a crossed module of Lie algebras.
\end{prop}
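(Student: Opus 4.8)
The plan is to verify the defining data of a crossed module one item at a time, using throughout the single structural constraint that each object $(F,f)\in\ggl(\phi)_0$ satisfies $\phi F=f\phi$. Since the excerpt already establishes that $\ggl(\phi)_0$ is a Lie subalgebra of $\ggl(W)\oplus\ggl(V)$, the only remaining point about the Lie algebra structures themselves is that the bracket $[A_1,A_2]_\phi=A_1\phi A_2-A_2\phi A_1$ on $\ggl(\phi)_1=Hom(V,W)$ satisfies the Jacobi identity. Here I would record the clean observation that this bracket is the commutator of the bilinear product $A_1\bullet A_2:=A_1\phi A_2$; since composition is associative we have $(A_1\bullet A_2)\bullet A_3=A_1\phi A_2\phi A_3=A_1\bullet(A_2\bullet A_3)$, so $\bullet$ is an associative (non-unital) product and its commutator automatically satisfies Jacobi. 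Bilinearity and antisymmetry are immediate, so $\ggl(\phi)_1$ is a Lie algebra for free.

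Next I would show that $\Delta A=(A\phi,\phi A)$ is a Lie algebra morphism. Expanding $\Delta[A_1,A_2]_\phi$ and $[\Delta A_1,\Delta A_2]=([A_1\phi,A_2\phi],[\phi A_1,\phi A_2])$ componentwise, both sides equal $(A_1\phi A_2\phi-A_2\phi A_1\phi,\ \phi A_1\phi A_2-\phi A_2\phi A_1)$; no use of the compatibility relation is even needed, since $\Delta$ merely records pre- and post-composition with $\phi$. I would then verify that $\Lie^{\phi}$ is a Lie algebra action by derivations. That $\Lie^{\phi}\colon\ggl(\phi)_0\to\ggl(\ggl(\phi)_1)$ is a morphism amounts to the identity $\Lie^{\phi}_{[(F_1,f_1),(F_2,f_2)]}=[\Lie^{\phi}_{(F_1,f_1)},\Lie^{\phi}_{(F_2,f_2)}]$, which holds because $A\mapsto FA-Af$ is the difference of commuting left and right multiplications; expanding the nested composites, all cross terms cancel and both sides reduce to $(F_1F_2-F_2F_1)A-A(f_1f_2-f_2f_1)$.

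The heart of the computation, and the step I expect to be the main obstacle, is the derivation property $\Lie^{\phi}_{(F,f)}[A_1,A_2]_\phi=[\Lie^{\phi}_{(F,f)}A_1,A_2]_\phi+[A_1,\Lie^{\phi}_{(F,f)}A_2]_\phi$. Expanding the right-hand side produces eight terms, four of which are the desired $FA_1\phi A_2-FA_2\phi A_1-A_1\phi A_2 f+A_2\phi A_1 f$ and four of which are spurious; the spurious terms are exactly those of the form $A_i f\phi A_j$ and $A_i\phi F A_j$, and they cancel in pairs precisely upon substituting $f\phi=\phi F$. This is the one place where the defining relation of $\ggl(\phi)_0$ is genuinely indispensable, so I would carry the bookkeeping out explicitly.

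Finally I would dispatch the two crossed-module identities. Equivariance, $\Delta(\Lie^{\phi}_{(F,f)}A)=[(F,f),\Delta A]$, reduces after expansion to the substitutions $Af\phi=A\phi F$ and $\phi FA=f\phi A$, both instances of $\phi F=f\phi$. The infinitesimal Peiffer identity is immediate: taking $(F,f)=\Delta A_0=(A_0\phi,\phi A_0)$ gives $\Lie^{\phi}_{\Delta A_0}A_1=(A_0\phi)A_1-A_1(\phi A_0)=A_0\phi A_1-A_1\phi A_0=[A_0,A_1]_\phi$ straight from the definitions, with no further input. Collecting these facts establishes that $\Delta\colon\ggl(\phi)_1\to\ggl(\phi)_0$ together with the action $\Lie^{\phi}$ is a crossed module of Lie algebras.
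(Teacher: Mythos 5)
Your proposal is correct and follows essentially the same route as the paper: a direct verification of each crossed-module axiom by expanding the definitions and invoking the relation $\phi F=f\phi$ exactly where the paper does (the derivation property and equivariance). The only addition is your explicit check that $[\cdot,\cdot]_\phi$ satisfies Jacobi as the commutator of the associative product $A_1\phi A_2$, which the paper leaves implicit in its remark that horizontal composition is bilinear and associative.
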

\begin{proof}
This amounts to a routine check.
\begin{itemize}
\item $\Delta$ is a Lie algebra homomorphism: It is clearly linear and
\begin{align*}
\Delta [A_1,A_2]_\phi & = ([A_1,A_2]_\phi\phi ,\phi [A_1,A_2]_\phi) \\
                 & = ((A_1\phi A_2-A_2\phi A_1)\phi ,\phi (A_1\phi A_2-A_2\phi A_1)) \\
                 & = (A_1\phi A_2\phi -A_2\phi A_1\phi ,\phi A_1\phi A_2-\phi A_2\phi A_1) \\
                 & = ([A_1\phi, A_2\phi ],[\phi A_1\phi A_2])=[\Delta A_1,\Delta A_2].
\end{align*}
\item $\ggl(\phi)_0$ acts by derivations:
\begin{align*}
\Lie^\phi_{(F,f)}[A_1,A_2]_\phi & = F[A_1,A_2]_\phi-[A_1,A_2]_\phi f \\
                      & = F(A_1\phi A_2-A_2\phi A_1)-(A_1\phi A_2-A_2\phi A_1)f \\
                      & = FA_1\phi A_2-FA_2\phi A_1-A_1\phi A_2f+A_2\phi A_1f ,
\end{align*}
while on the other hand
\begin{align*}
[\Lie^\phi_{(F,f)}A_1,A_2]_\phi & = (\Lie^\phi_{(F,f)}A_1)\phi A_2 -A_2\phi\Lie^\phi_{(F,f)}A_1\\
                      & = FA_1\phi A_2-A_1f\phi A_2-A_2\phi FA_1+A_2\phi A_1f , 
\end{align*}
and
\begin{align*}
[A_1,\Lie^\phi_{(F,f)}A_2] & = A_1\phi\Lie^\phi_{(F,f)} A_2 -(\Lie^\phi_{(F,f)}A_2)\phi A_1\\
                      & = A_1\phi FA_2-A_1\phi A_2f-FA_2\phi A_1+A_2f\phi A_1 . 
\end{align*}
Hence, the desired equality follows by $\phi F=f\phi$.
\item $\xymatrix{\Lie^\phi :\ggl(\phi)_0 \ar[r] & \ggl (Hom(V,W))}$ is a Lie algebra homomorphism:
\begin{align*}
\Lie^\phi_{([F_1,F_2],[f_1,f_2])}A & = [F_1,F_2]A-A[f_1,f_2] \\
                              & = F_1F_2A-F_2F_1A-Af_1f_2+Af_2f_1 ,
\end{align*}
while on the other hand
\begin{align*}
\Lie^\phi_{(F_1,f_1)}\Lie^\phi_{(F_2,f_2)}A & = \Lie^\phi_{(F_1,f_1)}(F_2A-Af_2) \\
                                  & = F_1F_2A-F_1Af_2 -F_2Af_1+Af_2f_1 , 
\end{align*}
and
\begin{align*}
\Lie^\phi_{(F_2,f_2)}\Lie^\phi_{(F_1,f_1)}A & = \Lie^\phi_{(F_2,f_2)}(F_1A-Af_1) \\
                                  & = F_2F_1A-F_2Af_1 -F_1Af_2+Af_1f_2 . 
\end{align*}
Therefore, subtracting gives the desired identity.
\item $\Delta$ is equivariant:
\begin{align*}
\Delta(\Lie^\phi_{(F,f)}A) & = ((\Lie^\phi_{(F,f)}A)\phi ,\phi\Lie^\phi_{(F,f)}A) \\
                       & = (FA\phi -Af\phi ,\phi FA-\phi Af) \\
                       & = (FA\phi -A\phi F,f\phi A-\phi Af) \\
                       & = ([F,A\phi ],[f,\phi A])=[(F,f),\Delta A]
\end{align*}
\item Infinitesimal Peiffer: 
\begin{align*}
\Lie^\phi_{\Delta A_0}A_1 & = \Lie^\phi_{(A_0\phi ,\phi A_0)}A_1 \\
                     & = A_0\phi A_1-A_1\phi A_0 =[A_0,A_1]_\phi.
\end{align*}
\end{itemize}
\end{proof}

\subsection{The canonical simplicial object associated to a Lie $2$-algebra}
In this section, we point out that the cochain complex of a Lie $2$-algebra regarded as a Lie groupoid is compatible with the respective complexes that each space of the nerve has got. We will do so by regarding Lie algebras as special types of DG-algebras. \\

Let now $\gg$ be a Lie algebra and consider its Chevalley-Eilenberg complex $C^q(\gg)=\bigwedge^q \gg^*$, whose differential
\begin{eqnarray*}
\xymatrix{
\delta :C^\bullet (\gg) \ar[r] & C ^{\bullet +1} (\gg)
}
\end{eqnarray*}
is defined by the usual formula
\begin{align*}
(\delta_\gg\omega)(X)& =\sum_{m<n}(-1)^{m+n}\omega([x_m,x_n],X(m,n)),
\end{align*}
for $\omega\in C^q(\gg)$ and $X=(x_0,...,x_q)\in\gg^{q+1}$. Here and for the rest of this text, we will use the convention that
\begin{eqnarray*}
X(j)=(x_0,...,x_{j-1},x_{j+1},...,x_{q}) & \textnormal{and} & X(m,n)=(x_0,...,x_{m-1},x_{m+1},...,x_{n-1},x_{n+1},...,x_{q}),
\end{eqnarray*}
as opposed to the usual $\hat{\cdot}$ notation. \\
It is known that the wedge product verifies a graded Leibniz type rule for this differential and thus $(C^\bullet(\gg),\wedge,\delta_\gg)$ is usually referred to as a differential graded algebra or DG-algebra for short. Now, suppose conversely that we are given a differential $d$ making $(C^\bullet(\gg),\wedge,d)$ into a DG-algebra. 
As it turns out, this structure endows $\gg$ with a Lie algebra structure, whose bracket is defined using the canonical isomorphism $\gg\cong (\gg^*)^*$. Let $\omega\in C^1(\gg)$, and $x_1,x_2\in\gg$ and set
\begin{eqnarray*}
\omega([x_1,x_2]_d):=-d\omega(x_1,x_2).
\end{eqnarray*}
Clearly, this formula exploits the formula for the differential of the Chevalley-Eilenberg complex; hence, if we call $\gg_d$ the Lie algebra thus defined, $\delta_{\gg_d}=d$ and conversely $\gg_{\delta_\gg}=\gg$. We summarize this in the following proposition.
\begin{prop}(see e.g.\cite{Vaintrob})
Given a vector space $\gg$, there is a one-to-one correspondence between Lie algebra structures on $\gg$ and DG-algebra structures on $(C^\bullet(\gg),\wedge)$. 
\end{prop}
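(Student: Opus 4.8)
The plan is to show that the two assignments already written down — sending a Lie bracket to its Chevalley--Eilenberg differential $\delta_\gg$, and sending a DG-algebra differential $d$ to the bracket $[\cdot,\cdot]_d$ defined by $\omega([x_1,x_2]_d)=-d\omega(x_1,x_2)$ — are both well defined and mutually inverse. The crux is a single structural observation: because $C^\bullet(\gg)=\bigwedge\gg^*$ is the free graded-commutative algebra on $\gg^*$, a degree-$1$ graded derivation is completely determined by what it does in degree $1$.

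First I would pin down the data of a DG-structure. Given a graded Leibniz derivation $d$ of degree $+1$, applying Leibniz to $1=1\wedge 1$ forces $d|_{C^0}=0$, so $d$ is determined by its restriction $d|_{C^1}\colon\gg^*\to\bigwedge^2\gg^*$; conversely, every linear map of this shape extends uniquely to a degree-$1$ derivation of $\bigwedge\gg^*$. Dualizing through the canonical isomorphism $\gg\cong(\gg^*)^*$, a linear map $\gg^*\to\bigwedge^2\gg^*$ is the same datum as an antisymmetric bilinear map $\gg\times\gg\to\gg$, the antisymmetry being automatic since the target lands in $\bigwedge^2\gg^*$. This identification is, by construction, exactly the formula $\omega([x_1,x_2]_d)=-d\omega(x_1,x_2)$ (the degree-$1$ instance $(\delta_\gg\omega)(x_0,x_1)=-\omega([x_0,x_1])$ of the Chevalley--Eilenberg formula), so at the level of degree-$1$ data the two assignments are inverse linear bijections between degree-$1$ derivations (ignoring $d^2=0$) and antisymmetric brackets.

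Next I would match the two remaining conditions. For the Lie-to-DG direction, the graded Leibniz rule for $\delta_\gg$ is the classical fact recalled above, so it only remains to see $\delta_\gg^2=0$; for the DG-to-Lie direction, one needs the Jacobi identity for $[\cdot,\cdot]_d$. Both are the same computation, organized by the observation that $d^2$ is again a graded derivation — being the square of an odd derivation, i.e. $d^2=\tfrac12[d,d]$ for the graded commutator — now of degree $+2$. Hence $d^2=0$ if and only if it vanishes on generators; it vanishes on $C^0$ trivially, so $d^2=0$ is equivalent to $d^2|_{C^1}=0$. Writing out $d^2\omega=0$ for $\omega\in\gg^*$ and evaluating on a triple $(x_0,x_1,x_2)$ unwinds, via the degree-$1$ dictionary above, precisely to the cyclic Jacobi sum for $[\cdot,\cdot]_d$. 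This simultaneously shows that $\delta_\gg^2=0$ follows from Jacobi and that Jacobi follows from $d^2=0$.

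Finally, bijectivity is immediate: the identities $\delta_{\gg_d}=d$ and $\gg_{\delta_\gg}=\gg$ already noted follow from the fact that both composites agree on degree-$1$ data and that everything in sight is determined by that data. I expect the main obstacle to be bookkeeping rather than conceptual: verifying carefully that $d^2$ is a derivation (the sign rule for the square of an odd derivation) and that the expansion of $d^2|_{C^1}=0$ reproduces the Jacobi identity with the correct signs, matching the $(-1)^{m+n}$ convention fixed for $\delta_\gg$.
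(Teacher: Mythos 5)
Your argument is correct and is the standard completion of the sketch the paper itself gives around the statement (the paper defers the actual proof to the cited reference, only recording the formula $\omega([x_1,x_2]_d)=-d\omega(x_1,x_2)$ and the assertions $\delta_{\gg_d}=d$, $\gg_{\delta_\gg}=\gg$); reducing both $d$ and $d^2$ to their degree-$1$ data via the graded derivation property is exactly the right mechanism, and the expansion of $d^2|_{C^1}=0$ does reproduce the Jacobi identity with the paper's $(-1)^{m+n}$ sign convention. The only tacit hypothesis is finite-dimensionality of $\gg$, needed for the identifications $\gg\cong(\gg^*)^*$ and $\bigwedge^2\gg^*\cong(\bigwedge^2\gg)^*$, which the paper also assumes implicitly.
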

This correspondence extends to an isomorphism of categories. 
\begin{prop}\label{Lie=DGA}
Let 
\begin{eqnarray*}
\xymatrix{
\phi: \gg \ar[r] & \hh
}
\end{eqnarray*} 
be a linear map relating the Lie algebras $\gg$ and $\hh$. Then, $\phi$ is a Lie algebra morphism if, and only if $\phi^*$ is a map of DG-algebras from the complex $(C^\bullet(\hh),\delta_{\hh})$ to the complex $(C^\bullet(\gg),\delta_{\gg})$.
\end{prop}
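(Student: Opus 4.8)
The plan is to prove both implications by directly comparing how the bracket enters the Chevalley–Eilenberg differential with how the dual map $\phi^*$ acts on cochains. Since both differentials are determined by their action on the generators $C^1 = \gg^*$ (as established in the previous proposition, where the bracket is recovered from the degree-raising map on one-forms), I would reduce the entire statement to a computation in degree one, then invoke the fact that a map of graded algebras commuting with the differential on generators commutes with it everywhere.

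\medskip

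First I would set up notation. For $\xi \in \hh^*$ and $x_1, x_2 \in \gg$, the pullback satisfies $(\phi^*\xi)(x) = \xi(\phi(x))$, so that $\phi^*$ is a morphism of the underlying graded algebras $(C^\bullet(\hh), \wedge) \to (C^\bullet(\gg), \wedge)$ for free, since pullback of linear forms extends multiplicatively to the exterior algebra. Thus the only content of the statement is whether $\phi^*$ commutes with the differentials, i.e. whether $\phi^* \circ \delta_\hh = \delta_\gg \circ \phi^*$. Because both complexes are DG-algebras generated in degree one, and $\phi^*$ already respects the wedge product, the commutation relation holds on all of $C^\bullet(\hh)$ as soon as it holds on the generating subspace $C^1(\hh) = \hh^*$; I would state this reduction explicitly and justify it by the Leibniz rule for $\delta_\gg$ and $\delta_\hh$ together with the multiplicativity of $\phi^*$.

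\medskip

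The core of the argument is then the degree-one identity. Using the defining formula $\delta\omega(x_1,x_2) = -\omega([x_1,x_2])$ for a one-form $\omega$ (the specialization of the Chevalley–Eilenberg differential to $C^1$), I would compute both sides evaluated on a pair $x_1, x_2 \in \gg$. On one side, $(\phi^*\delta_\hh \xi)(x_1,x_2) = (\delta_\hh \xi)(\phi x_1, \phi x_2) = -\xi([\phi x_1, \phi x_2]_\hh)$. On the other side, $(\delta_\gg \phi^* \xi)(x_1,x_2) = -(\phi^*\xi)([x_1,x_2]_\gg) = -\xi(\phi[x_1,x_2]_\gg)$. These two expressions agree for every $\xi \in \hh^*$ precisely when $\phi([x_1,x_2]_\gg) = [\phi x_1, \phi x_2]_\hh$, that is, exactly when $\phi$ is a Lie algebra morphism. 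This equivalence, read in both directions, gives the proposition: the forward implication uses the morphism property to deduce the commutation on generators, and the converse uses the commutation on generators (evaluated against arbitrary $\xi$, invoking nondegeneracy of the pairing $\gg \cong (\gg^*)^*$) to recover the morphism property.

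\medskip

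I do not expect a serious obstacle here, as the computation is short and essentially forced by the definitions. The one point demanding care is the reduction step: I must make sure that commutation of $\phi^*$ with the differentials on the generators $\hh^*$ genuinely propagates to the whole exterior algebra. The cleanest way is to argue by induction on wedge-degree using the graded Leibniz rule for both $\delta_\gg$ and $\delta_\hh$, combined with the identity $\phi^*(\alpha \wedge \beta) = \phi^*\alpha \wedge \phi^*\beta$; since the signs in the Leibniz rule depend only on the degree and $\phi^*$ preserves degree, the signs match on both sides and the induction closes. I would phrase this carefully rather than grinding through it, since it is the structural heart that turns a one-dimensional computation into a statement about the full DG-algebra morphism.
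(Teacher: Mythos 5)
Your argument is correct and follows the same route the paper indicates: the paper gives no formal proof but remarks precisely that, by compatibility with the wedge product, it suffices to check commutation of the pullback with the differentials in low degree, which is exactly your reduction, and your degree-one computation is the standard equivalence $\xi(\phi[x_1,x_2]_\gg)=\xi([\phi x_1,\phi x_2]_\hh)$ for all $\xi\in\hh^*$. The only cosmetic difference is that the paper mentions degrees $0$ and $1$ while you treat only degree $1$; degree $0$ is vacuous for trivial coefficients, so nothing is lost.
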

In fact, notice that due to the compatibility with the wedge product, in order to proof that $\phi^*$ is a map of complexes, it is enough to see that the pull-back commutes with the exterior derivatives in degrees $0$ and $1$. Next, consider a Lie $2$-algebra as the one in \ref{ALie2Alg}, we will write $\gg_2:=\gg_1 \times_{\hh} \gg$ for the space of composable arrows. We write the rest of the nerve of the groupoid, which is by definition a simplicial manifold internal to the category of Lie algebras, as the multiplication and the projections are Lie algebra homomorphisms and $\gg_p :=\gg _1^{(p)}$ is a Lie subalgebra of $\gg _1^p$ for each $p$ 
\begin{eqnarray*}
\xymatrix{
... \gg_3\ar@<1.5ex>[r]\ar@<0.5ex>[r]\ar@<-0.5ex>[r]\ar@<-1.5ex>[r] & \gg_2 \ar[r]\ar@<1.0ex>[r]\ar@<-1.0ex>[r] &  \gg_1 \ar@<0.5ex>[r]\ar@<-0.5ex>[r] & \hh.
}
\end{eqnarray*}
The face maps are defined by
\begin{eqnarray*}
\partial_k (x_0,...,x_p)=
  \begin{cases}
    (x_1,...,x_p)                       & \quad \text{if } k=0  \\
    (x_0,...,\hat{m}(x_{k-1},x_k),...,x_p)  & \quad \text{if } 0<k\leq p\\
    (x_0,...,x_{p-1})                   & \quad \text{if } k=p+1, 
  \end{cases}
\end{eqnarray*}
for a given element $(x_0,...,x_p)\in\gg^{(p+1)}$.

By means of proposition \ref{Lie=DGA}, we get 
\begin{eqnarray*}
\xymatrix{
... C^\bullet(\gg_3) & \ar@<1.5ex>[l]\ar@<0.5ex>[l]\ar@<-0.5ex>[l]\ar@<-1.5ex>[l] C^\bullet(\gg_2) & \ar[l]\ar@<1.0ex>[l]\ar@<-1.0ex>[l] C^\bullet (\gg_1) & \ar@<0.5ex>[l]\ar@<-0.5ex>[l] C^\bullet (\hh).
}
\end{eqnarray*}
Using the usual combinatorics $\partial:= \sum_{k=0}^{p+1}(-1)^k \partial_k^*$, one builds the double complex
\begin{eqnarray}\label{doubleTriv}
\xymatrix{
\vdots                                             & \vdots                                            & 
\vdots                                                &      \\ 
\bigwedge ^3\hh ^*\ar[r]^{\partial}\ar[u]          & \bigwedge ^3\gg _1^* \ar[r]^{\partial}\ar[u]        & 
\bigwedge ^3\gg _2^*\ar[r]\ar[u]                      & \dots\\
\hh ^*\wedge \hh ^* \ar[r]^{\partial}\ar[u]^{\delta_0} & \gg _1^*\wedge\gg _1^* \ar[r]^{\partial}\ar[u]^{\delta_1} & 
\gg _2^*\wedge \gg _2^* \ar[r]\ar[u]^{\delta_2}           & \dots\\
\hh ^* \ar[r]^{\partial}\ar[u]^{\delta_0}              & \gg _1^* \ar[r]^{\partial}\ar[u]^{\delta_1}             & 
\gg _2^* \ar[r]\ar[u]^{\delta_2}                          & \dots 
}
\end{eqnarray} 
Here, the $p$th column is the Chevalley-Eilenberg complex of the Lie algebra $\gg_p$ whose differential $\delta_i$ is a shorthand for $\delta_{\gg_i}$. In order, the $q$th row is the subcomplex of alternating multilinear groupoid cochains of the groupoid $\xymatrix{\gg _1^q \ar@<0.5ex>[r] \ar@<-0.5ex>[r] & \hh^q}$. 
We will call the total complex  
\begin{eqnarray*}
\Omega ^k_{tot}(\gg _1)=\bigoplus _{p+q=k}\bigwedge ^q\gg _p^*,
\end{eqnarray*}
with differential $d=\delta +(-1)^{q}\partial$ the \textit{double complex of} $\gg_1$, and its cohomology simply as $2$-cohomology.  
\begin{remark}
This construction is but a shadow of a more general correspondence between Lie algebroid structures on vector bundles and DG-algebra structures on their `exterior algebras of sections. When $\A$ is the tangent prolongation of a Lie groupoid, the double complex is the famous Bott-Shulman complex.
\end{remark} 


\section{Representations of Lie 2-algebras}
\begin{Def}
A \textit{representation} of a Lie $2$-algebra $\gg_1 =\xymatrix{\gg \ar[r]^\mu & \hh}$ on $\mathbb{V} =\xymatrix{W \ar[r]^\phi & V}$ is a morphism of Lie $2$-algebras
\begin{eqnarray*}
\xymatrix{
\rho :\gg_1 \ar[r] & \ggl (\phi ).
}
\end{eqnarray*}
\end{Def}
We will refer to a map as the one above as a $2$-representation. We would like to remark that, when we say a morphism of Lie $2$-algebras, we mean a na\"ive linear functor respecting the Lie algebra structures, as opposed to more general types of morphisms (e.g. bibundles of Lie groupoids, or equivalently maps of $2$-term $L_\infty$-algebras). \\
By definition, a representation of a Lie $2$-algebra has two ``commuting'' representations of $\hh$ on $W$ and on $V$ coming from the map of Lie algebras at the level of objects,  
\begin{eqnarray*}
\xymatrix{
\rho_0 :\hh \ar[r] & \ggl (\phi )_0\leq\ggl (W)\oplus\ggl (V). 
}
\end{eqnarray*}
Its components $\xymatrix{\rho_0^1 :\hh \ar[r] & \ggl (W)}$, $\xymatrix{\rho_0^0 :\hh \ar[r] & \ggl (V)}$ are representations fitting in the diagram
\begin{eqnarray*}
\xymatrix{
W \ar[d]_\phi \ar[r]^{\rho_0^1(y)} & W \ar[d]^\phi \\
V \ar[r]_{\rho_0^0(y)}             & V,
}
\end{eqnarray*} 
for each $y\in\hh$. On the other hand, at the level of arrows, one has got the map 
\begin{eqnarray*}
\xymatrix{
\rho_1 :\gg \ar[r] & \ggl(\phi)_1.
}
\end{eqnarray*}
This map is a Lie algebra homomorphism; hence,
\begin{eqnarray*}
\rho_1([x_0,x_1])=\rho_1(x_0)\phi\rho_1(x_1)-\rho_1(x_1)\phi\rho_1(x_0),
\end{eqnarray*}
and it also verifies the following equations for all $x\in\gg$: 
\begin{eqnarray*}
\rho^0_0(\mu(x))=\phi\rho_1(x), & \rho^1_0(\mu(x))=\rho_1(x)\phi ,
\end{eqnarray*}
and for all $y\in\hh$
\begin{eqnarray*}
\rho_1(\Lie_y x)=\rho_0^1(y)\rho_1(x)-\rho_1(x)\rho_0^0(y). 
\end{eqnarray*}
We now build an honest representation of $\gg _1 =\gg\oplus_\Lie\hh$ on $W\oplus V$.
\begin{prop}\label{honestAlgRep}
Given a representation $2$-representation $\xymatrix{\rho :\gg_1 \ar[r] & \ggl (\phi )}$, there is an honest representation 
\begin{eqnarray*}
\xymatrix{
\bar{\rho}:\gg\oplus_\Lie\hh \ar[r] & \ggl (W\oplus V):(x,y) \ar@{|->}[r] & {}
}
\begin{pmatrix}
    \rho_0^1(y+\mu(x)) & \rho_1(x) \\
    0                  & \rho_0^0(y) 
\end{pmatrix}
\end{eqnarray*}
\end{prop}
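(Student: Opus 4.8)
The plan is to verify that $\bar\rho$ is a Lie algebra homomorphism, since linearity is immediate and each matrix entry is a linear map, so that $\bar\rho(x,y)$ is automatically a genuine element of $\ggl(W\oplus V)$. The entire content is therefore the bracket-preservation identity
\[
\bar\rho\big([(x_0,y_0),(x_1,y_1)]_\Lie\big)=\big[\bar\rho(x_0,y_0),\bar\rho(x_1,y_1)\big],
\]
which I would establish by comparing the two sides entry by entry in the upper-triangular block form of $\bar\rho$.

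First I would record the right-hand side. Writing $P_i=\rho_0^1(y_i+\mu(x_i))$, $Q_i=\rho_1(x_i)$ and $R_i=\rho_0^0(y_i)$, the product of two such upper-triangular block matrices is again upper-triangular, so the commutator has diagonal blocks $[P_0,P_1]$ and $[R_0,R_1]$ and off-diagonal block $P_0Q_1+Q_0R_1-P_1Q_0-Q_1R_0$. On the left-hand side I would expand $\bar\rho$ on the bracket of $\gg\oplus_\Lie\hh$, whose two components are $[x_0,x_1]+\Lie_{y_0}x_1-\Lie_{y_1}x_0$ and $[y_0,y_1]$.

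The two diagonal checks are short. The lower-right block is immediate because $\rho_0^0$ is a representation, hence $\rho_0^0([y_0,y_1])=[R_0,R_1]$. For the upper-left block, since $\rho_0^1$ is a homomorphism it suffices to show that the $\hh$-component of the bracket, namely $[y_0,y_1]+\mu\big([x_0,x_1]+\Lie_{y_0}x_1-\Lie_{y_1}x_0\big)$, equals $[y_0+\mu(x_0),\,y_1+\mu(x_1)]$; this follows by expanding the latter bracket and invoking that $\mu$ is a homomorphism together with the equivariance identity $\mu(\Lie_yx)=[y,\mu(x)]$.

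The main obstacle, and really the only calculation with several moving parts, is the off-diagonal block. There I would expand $\rho_1$ of the first bracket component using the structural identities of a $2$-representation: the homomorphism property $\rho_1([x_0,x_1])=\rho_1(x_0)\phi\rho_1(x_1)-\rho_1(x_1)\phi\rho_1(x_0)$ and the intertwining relation $\rho_1(\Lie_yx)=\rho_0^1(y)\rho_1(x)-\rho_1(x)\rho_0^0(y)$. On the other side I would split $P_i=\rho_0^1(y_i)+\rho_0^1(\mu(x_i))$ and substitute $\rho_0^1(\mu(x_i))=\rho_1(x_i)\phi$. After these substitutions both expressions collapse to the same six terms
\[
Q_0\phi Q_1-Q_1\phi Q_0+\rho_0^1(y_0)Q_1-\rho_0^1(y_1)Q_0+Q_0R_1-Q_1R_0,
\]
and the identity follows. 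I expect the bookkeeping of signs and of the position into which $\phi$ is inserted between the $Q_i$ to be the one place demanding care; everything else is a routine check.
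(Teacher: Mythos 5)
Your proposal is correct and follows essentially the same route as the paper: reduce to bracket-preservation, observe the diagonal blocks match (the upper-left via equivariance of $\mu$), and verify the off-diagonal block by expanding $\rho_1$ on the bracket with the homomorphism and intertwining identities and substituting $\rho_0^1(\mu(x_i))=\rho_1(x_i)\phi$. Your six-term expression is the correct common value of both sides, and you are in fact slightly more careful than the paper about why the upper-left blocks agree.
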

\begin{proof}
$\bar{\rho}$ is clearly linear and
\begin{align*}
\bar{\rho}([(x_0,y_0),(x_1,y_1)]_\Lie) & = \bar{\rho}([x_0,x_1]+\Lie _{y_0}x_1-\Lie_{y_1}x_0,[y_0,y_1]) \\
                                       & = \begin{pmatrix}
             \rho_0^1([y_0,y_1]+\mu([x_0,x_1]+\Lie _{y_0}x_1-\Lie_{y_1}x_0)) & \rho_1([x_0,x_1]+\Lie _{y_0}x_1-\Lie_{y_1}x_0) \\
                                             0                               & \rho_0^0([y_0,y_1]) 
                                           \end{pmatrix}.
\end{align*}
The maps appearing in the diagonal of this matrix will agree right away with those in the diagonal of $[\bar{\rho}(x_0,y_0),\bar{\rho}(x_1,y_1)]$. The map in the top corner to the right will coincide as well, as is shown by the following computation:
\begin{align*}
\rho_1([x_0,x_1]+\Lie _{y_0}x_1-\Lie_{y_1}x_0) & = \rho_1([x_0,x_1])+\rho_1(\Lie _{y_0}x_1)-\rho_1(\Lie_{y_1}x_0) \\
                                               & = [\rho_1(x_0),\rho_1(x_1)]+\Lie _{\rho_0(y_0)}\rho_1(x_1)-\Lie_{\rho_0(y_1)}\rho_1(x_0), 
\end{align*}
but the bracket is
\begin{align*}
[\rho_1(x_0),\rho_1(x_1)] & = \rho_1(x_0)\phi\rho_1(x_1)-\rho_1(x_1)\phi\rho_1(x_0) \\
                          & = \rho_0^1(\mu(x_0))\rho_1(x_1)-\rho_0^1(\mu(x_1))\rho_1(x_0);
\end{align*}
therefore, since $\Delta\circ\rho_1=\rho_0\circ\mu$ and the action verifies
\begin{align*}
\Lie _{\rho_0(y_0)}\rho_1(x_1) & = \rho_0^1(y_0)\rho_1(x_1)-\rho_1(x_1)\rho_0^0(y_0),
\end{align*}
we have got
\begin{align*}
\rho_1([x_0,x_1]+\Lie _{y_0}x_1-\Lie_{y_1}x_0) & = \rho_0^1(y_0+\mu(x_0))\rho_1(x_1)-\rho_0^1(y_1+\mu(x_1))\rho_1(x_0)+ \\
                                               & \qquad\rho_1(x_1)\rho_0^0(y_0)-\rho_1(x_0)\rho_0^0(y_1),                                                  
\end{align*}
which is the entry in the top corner of $[\bar{\rho}(x_0,y_0),\bar{\rho}(x_1,y_1)]$.

\end{proof}
The semi-direct sum of Lie algebras with respect to this representation,  $\gg _1 {}_{\bar{\rho}}\ltimes\mathbb{V}$, can be endowed with a structure that we call semi-direct product of Lie $2$-algebras and whose associated crossed module is
\begin{eqnarray*}
\xymatrix{
\gg {}_{\rho_0^1\circ\mu}\oplus W \ar[r]^{\mu\times\phi} & \hh {}_{\rho_0^0}\oplus V,
}
\end{eqnarray*}
with action
\begin{eqnarray*}
\Lie^\rho_{(y,v)}(x,w)=(\Lie_y x,\rho_0^1(y)w-\rho_1(x)v).
\end{eqnarray*}
We now provide some examples.
\begin{ex:}
Trivial representations. Of course, all of the defining relations get trivially satisfied if $\rho\equiv 0$.
\end{ex:}
\begin{ex:}\label{unitRep}
Usual Lie algebra representations can also be seen as examples of $2$-representations. Setting $W=\lbrace 0\rbrace$, a $2$-representation is equivalent to a single representation $\rho$ of $\hh$ on $V$, such that $\rho_{\mu(x)}\equiv 0$ for every $x\in\gg$. Ultimately, then, a $2$-representation on the unit $2$-vector space is simply a representation of $\hh/\mu(\gg)$. In particular, $2$-representations of a unit Lie $2$-algebra on a unit $2$-vector space are usual Lie algebra representations. \\
Curiously enough, assuming $V=\lbrace 0\rbrace$, one gets the same prescription, i.e. $2$-representation on a Lie group internal to vector spaces consist also of a single representation of the orbit space.
\end{ex:}
\begin{ex:}
Representations up to homotopy. The data defining a $2$-representation of the unit Lie $2$-algebra $\xymatrix{\hh \ar@<0.5ex>[r] \ar@<-0.5ex>[r] & \hh}$ coincides with a representation up to homotopy with zero curvature. In general, notice that the semi-direct product of Lie $2$-algebras $\gg_1{}_{\bar{\rho}}\ltimes\mathbb{V}$ is also VB-groupoid over $\gg_1$; hence, there is an associated representation up to homotopy (cf. \cite{CristMat}). However, this representation is oblivious of the fact that it comes from a $2$-representation. In other words, for any given $2$-representation, the associated representation up to homotopy is the same.
\end{ex:}
\begin{ex:}
The adjoint representation. We describe the maps defining the adjoint representation of a Lie $2$-algebra on itself.
\begin{eqnarray*}
\xymatrix{ad_1:\gg \ar[r] & \ggl(\mu)_1} & \ad_1(x)(u):=-\Lie_u x ,  \\
\xymatrix{ad_0^1:\hh \ar[r] & \ggl(\gg)}   & \ad_0^1(y)(v):=\Lie_y v , \\
\xymatrix{ad_0^0:\hh \ar[r] & \ggl(\hh)}   & \ad_0^0(y)(u):=[y,u].    
\end{eqnarray*}
\end{ex:}
We would like to point out that in contrast with general Lie groupoids and Lie algebroids, the latter example shows that Lie $2$-algebras admit an adjoint representation extending the classic one. We will define a cohomology theory of Lie $2$-algebras with values in these $2$-representations, but first we will prove that  
this notion of $2$-representations is the kind of action induced in an abstract extension.
\begin{Def}
An \textit{extension} of the Lie $2$-algebra $\xymatrix{\gg \ar[r]^{\mu} & \hh}$ by the $2$-vector space $\xymatrix{W \ar[r]^{\phi} & V}$ is a Lie $2$-algebra $\xymatrix{\mathfrak{e}_1 \ar[r]^{\epsilon} & \mathfrak{e}_0}$ that fits in 
\begin{eqnarray*}
\xymatrix{
0 \ar[r] & W \ar[d]_\phi\ar[r]^{j_1} & \mathfrak{e}_1 \ar[d]_\epsilon\ar[r]^{\pi_1} & \gg \ar[d]^\mu\ar[r] & 0  \\
0 \ar[r] & V \ar[r]_{j_0}            & \mathfrak{e}_0 \ar[r]_{\pi_0}                & \hh \ar[r]           & 0, 
}
\end{eqnarray*}
where the top and bottom rows are short exact sequences and the squares are maps of Lie $2$-algebras.
\end{Def}
\begin{prop}\label{Ind2Rep} 
Given a Lie $2$-algebra extension of $\xymatrix{\gg \ar[r]^{\mu} & \hh}$ by a $2$-vector space $\xymatrix{W \ar[r]^{\phi} & V}$,
\begin{eqnarray*}
\xymatrix{
0 \ar[r] & W \ar[d]_\phi\ar[r]^{j_1} & \mathfrak{e}_1 \ar[d]_\epsilon\ar[r]_{\pi_1} & \gg \ar[d]^\mu\ar[r]\ar@/_/[l]_{\sigma_1} & 0  \\
0 \ar[r] & V \ar[r]^{j_0}            & \mathfrak{e}_0 \ar[r]_{\pi_0}                & \hh \ar[r]\ar@/_/[l]_{\sigma_1}      & 0, 
}
\end{eqnarray*}
and a linear splitting $\sigma$, there is an induced $2$-representation $\xymatrix{\rho^{\epsilon}_{\sigma}:\gg_1 \ar[r] & \ggl (\phi)}$ given by
\begin{align*}
\rho^0_0(y)v     & =[\sigma_0(y),v]_{\mathfrak{e}_0}              & \rho^1_0(y)w & =\Lie^{\epsilon}_{\sigma_0(y)} w \\
\rho_1(x)v       & =-\Lie^{\epsilon}_v\sigma_1 (x),               &              & 
\end{align*}
for $y\in\hh$, $v\in V$, $w\in E$ and $x\in\gg$.
\end{prop}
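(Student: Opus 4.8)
The plan is to verify directly that the three linear maps $\rho_0^0,\rho_0^1,\rho_1$ assemble into a morphism of Lie $2$-algebras, i.e.\ that they satisfy the five structural identities recorded just before Proposition~\ref{honestAlgRep}. Throughout I would identify $W$ with $j_1(W)=\ker\pi_1\subseteq\mathfrak{e}_1$ and $V$ with $j_0(V)=\ker\pi_0\subseteq\mathfrak{e}_0$; commutativity of the left-hand square then reads $\phi=\epsilon\vert_W$. Two structural observations will do most of the work. Since $\mathbb{V}$ is a $2$-vector space, its underlying crossed module has abelian terms and trivial action; because $j$ is a morphism of Lie $2$-algebras this forces $j_0(V),j_1(W)$ to be abelian ideals and, crucially, $\Lie^{\epsilon}_v w=j_1(\Lie^{\mathbb{V}}_v w)=0$ for all $v\in V$, $w\in W$. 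Secondly, $\mathfrak{e}_1\to\mathfrak{e}_0$ is itself a crossed module, so I may freely use equivariance $\epsilon(\Lie^{\epsilon}_a b)=[a,\epsilon(b)]_{\mathfrak{e}_0}$, infinitesimal Peiffer $\Lie^{\epsilon}_{\epsilon(b)}b'=[b,b']_{\mathfrak{e}_1}$, and the fact that $a\mapsto\Lie^{\epsilon}_a$ is a homomorphism into derivations, i.e.\ $[\Lie^{\epsilon}_a,\Lie^{\epsilon}_{a'}]=\Lie^{\epsilon}_{[a,a']_{\mathfrak{e}_0}}$.

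First I would check that the formulas are well defined, namely that $\rho_0^0(y)\in\ggl(V)$, $\rho_0^1(y)\in\ggl(W)$ and $\rho_1(x)\in\mathrm{Hom}(V,W)$. This is immediate upon applying $\pi_0$ or $\pi_1$ and using that $\pi$ is a morphism of crossed modules, hence intertwines brackets and actions, together with $\pi_0\sigma_0=\mathrm{id}$, $\pi_1\sigma_1=\mathrm{id}$: for instance $\pi_1(\Lie^{\epsilon}_v\sigma_1(x))=\Lie_{\pi_0(v)}\pi_1\sigma_1(x)=\Lie_0 x=0$, so $\rho_1(x)v\in\ker\pi_1=W$. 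The compatibility $\phi\rho_0^1(y)=\rho_0^0(y)\phi$ needed for $\rho_0(y)\in\ggl(\phi)_0$ then follows from a single application of equivariance: $\phi\rho_0^1(y)w=\epsilon(\Lie^{\epsilon}_{\sigma_0(y)}w)=[\sigma_0(y),\epsilon(w)]_{\mathfrak{e}_0}=\rho_0^0(y)\phi w$.

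Next I would verify that $\rho_0^0$ and $\rho_0^1$ are Lie algebra homomorphisms and that $\rho_0^0\circ\mu=\phi\circ\rho_1$, $\rho_0^1\circ\mu=\rho_1\circ\phi$. Here the recurring device is that $\sigma_0,\sigma_1$ are only linear, so each computation produces a ``curvature'' correction lying in $V$ or $W$ --- e.g.\ $[\sigma_0(y_0),\sigma_0(y_1)]-\sigma_0([y_0,y_1])\in\ker\pi_0=V$, or $\epsilon\sigma_1(x)-\sigma_0(\mu(x))\in V$ --- and every such term is annihilated by the two observations above (abelianness of $V$, and vanishing of $\Lie^{\epsilon}_{(\cdot)}(\cdot)$ on $V\times W$). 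For the homomorphism property of $\rho_0^0$ one expands with the Jacobi identity of $\mathfrak{e}_0$; for $\rho_0^1$ one uses $[\Lie^{\epsilon}_a,\Lie^{\epsilon}_{a'}]=\Lie^{\epsilon}_{[a,a']}$; and the two $\mu$-identities come from equivariance and Peiffer after discarding the correction terms. The action identity $\rho_1(\Lie_y x)=\rho_0^1(y)\rho_1(x)-\rho_1(x)\rho_0^0(y)$ then drops out almost formally once one writes $\sigma_1(\Lie_y x)=\Lie^{\epsilon}_{\sigma_0(y)}\sigma_1(x)+(\text{term in }W)$ and applies $[\Lie^{\epsilon}_{\sigma_0(y)},\Lie^{\epsilon}_v]=\Lie^{\epsilon}_{[\sigma_0(y),v]}$.

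The hard part will be the multiplicativity of $\rho_1$, i.e.\ $\rho_1([x_0,x_1])=\rho_1(x_0)\phi\rho_1(x_1)-\rho_1(x_1)\phi\rho_1(x_0)$, since it mixes the bracket $[\,\cdot\,,\,\cdot\,]_\phi$ of $\ggl(\phi)_1$ with all the crossed-module axioms at once. Writing $a_i=\sigma_1(x_i)$ and evaluating on $v\in V$, the left side reduces (after dropping a $W$-valued curvature term) to $-\Lie^{\epsilon}_v[a_0,a_1]_{\mathfrak{e}_1}$, while the right side is $-\Lie^{\epsilon}_{[\epsilon a_1,v]}a_0+\Lie^{\epsilon}_{[\epsilon a_0,v]}a_1$, using $\phi\rho_1(x_i)v=[\epsilon a_i,v]_{\mathfrak{e}_0}$ from equivariance. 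I would match the two by rewriting $[a_0,a_1]_{\mathfrak{e}_1}=\Lie^{\epsilon}_{\epsilon a_0}a_1$ via Peiffer, commuting $\Lie^{\epsilon}_v$ past $\Lie^{\epsilon}_{\epsilon a_0}$ with the homomorphism property, and finally using equivariance $\epsilon(\Lie^{\epsilon}_v a_1)=[v,\epsilon a_1]_{\mathfrak{e}_0}$ together with Peiffer once more to identify the leftover term $\Lie^{\epsilon}_{\epsilon a_0}(\Lie^{\epsilon}_v a_1)=[a_0,\Lie^{\epsilon}_v a_1]_{\mathfrak{e}_1}$ with $\Lie^{\epsilon}_{[\epsilon a_1,v]}a_0$. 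Once this identity is in hand, all five structural equations hold and $\rho^{\epsilon}_{\sigma}$ is a genuine $2$-representation.
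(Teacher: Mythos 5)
Your proposal is correct and follows essentially the same route as the paper's proof: identify $V,W$ with the kernels via the splitting, check well-definedness and the compatibility $\phi\circ\rho_0^1=\rho_0^0\circ\phi$ by applying $\pi$ and equivariance, and then verify each structural identity by showing the discrepancy is a curvature term ($\omega_0$, $\omega_1$, $\varphi$ or $\alpha$) valued in $V$ or $W$ that is killed by abelianness and by $\Lie^{\epsilon}_v w=0$. The only difference is cosmetic: you state up front the two structural facts (abelianness of the kernels and triviality of the action of $V$ on $W$) that the paper invokes tacitly when discarding these correction terms, and your handling of the multiplicativity of $\rho_1$ reorders the applications of Peiffer and equivariance without changing the computation.
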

The proof will consist of a series of computations that we will postpone to introduce several pieces of notation that will make easier its writing. \\
Given a $2$-extension as the one in the statement of proposition \ref{Ind2Rep} and adopting the convention that the injective maps are inclusions, we use the linear splitting to get the usual isomorphisms $\hh\oplus V\cong\mathfrak{e}_0$ and $\gg\oplus W\cong\mathfrak{e}_1$, given by
\begin{eqnarray*}
\xymatrix{
(z,a) \ar@{|->}[r] & a + \sigma_k (z)
}
\end{eqnarray*}
for $k=0$ and $k=1$ respectively. Their inverses are
\begin{eqnarray*}
\xymatrix{
e \ar@{|->}[r] & (\pi_k (e),e-\sigma_k (\pi_k (e))).
}
\end{eqnarray*}
In order to upgrade these isomorphisms of vector spaces to Lie algebra isomorphisms, we consider 
\begin{align*}
[v_0+\sigma_0(y_0),v_1+\sigma_0(y_1)]_{\mathfrak{e}_0} & =[v_0,v_1]+[\sigma_0(y_0),v_1]+[v_0,\sigma_0(y_1)]+[\sigma_0(y_0),\sigma_0(y_1)] \\
                                                       & = \rho^0_0(y_0)v_1 - \rho^0_0(y_1)v_0 +[\sigma_0(y_0),\sigma_0(y_1)].
\end{align*}
We use the inverse isomorphism to define the bracket on $\hh\oplus V$. Since $\rho^0_0(y_0)v_1 - \rho^0_0(y_1)v_0\in V$,
\begin{align*}
\pi_0(\rho^0_0(y_0)v_1 - \rho^0_0(y_1)v_0 +[\sigma_0(y_0),\sigma_0(y_1)]) & =\pi_0([\sigma_0(y_0),\sigma_0(y_1)]_{\mathfrak{e}_0}) \\
                                                                          & =[\pi_0\sigma_0(y_0),\pi_0\sigma_0(y_1)]_\hh =[y_0,y_1];
\end{align*}
thus, the bracket is
\begin{eqnarray*}
[(y_0,v_0),(y_1,v_1)]_0:=([y_0,y_1],\rho^0_0(y_0)v_1 - \rho^0_0(y_1)v_0 -\omega_0(y_0,y_1)),
\end{eqnarray*}
where $\omega_0(y_0,y_1)$ is shorthand for $\sigma_0([y_0,y_1])-[\sigma_0(y_0),\sigma_0(y_1)]$. This is nothing but the usual twisted semi-direct product $\hh {}_{\rho^0_0}\oplus^{\omega_0} V$ from the theory of Lie algebra extensions. Hence, if one supposes conversely, that the bracket was defined using an abstract $\omega_0\in (\hh^*\wedge\hh^*)\otimes V$, one is going to rediscover that in order for such bracket to satisfy the Jacobi identity, $\omega_0$ needs to be a $2$-cocycle in the Lie algebra cohomology of $\hh$ with values in $\rho^0_0$. We recall that the general formula for the differential of the complex of Lie algebra cochains of $\gg$ with values in a representation $\rho$ on the vector space $V$ 
\begin{eqnarray*}
\xymatrix{
\delta :\bigwedge^\bullet\gg^*\otimes V \ar[r] & \bigwedge^{\bullet +1}\gg^*\otimes V
}
\end{eqnarray*}
is 
\begin{align*}
(\delta\omega)(X)& =\sum_{j=0}^q(-1)^j\rho(x_j)\omega(X(j))+\sum_{m<n}(-1)^{m+n}\omega([x_m,x_n],X(m,n)),
\end{align*}
for $\omega\in\bigwedge^q\gg^*\otimes V$ and $X=(x_0,...,x_q)\in\gg^{q+1}$. \\
Following the same reasoning, one finds out that $\mathfrak{e}_1\cong\gg {}_{\rho^1_0\circ\mu}\oplus^{\omega_1} W$ as Lie algebras, with $\omega_1(x_0,x_1)=\sigma_1([x_0,x_1])-[\sigma_1(x_0),\sigma_1(x_1)]$; however, we are going to be able to waive the necessity of $\omega_1$ using the rest of the crossed module structure. \\
We now turn to the homomorphism $\epsilon$. Consider
\begin{align*}
\epsilon(w+\sigma_1(x)) & =\phi(w)+\epsilon(\sigma_1(x)),
\end{align*}
and use the inverse isomorphism to define the crossed module map. Since $\pi$ is a crossed module map,
\begin{align*}
\pi_0(\phi(w)+\epsilon(\sigma_1(x))) & =\pi_0(\phi(w))+\pi_0(\epsilon(\sigma_1(x)))) \\
                                     & =\mu(\pi_1\sigma_1(x)))=\mu(x);
\end{align*}
thus, the crossed module map is
\begin{eqnarray*}
\xymatrix{
(x,w) \ar@{|->}[r] & (\mu(x),\phi(w)+\varphi(x)),
}
\end{eqnarray*}
where $\varphi(x):=\epsilon(\sigma_1(x))-\sigma_0(\mu(x))$. We repeat this strategy one last time to get the action,
\begin{align*}
\Lie^\epsilon_{v+\sigma_0(y)}(w+\sigma_1(x)) & =\Lie_v w +\Lie_{\sigma_0(y)}w+\Lie_v\sigma_1(x)+\Lie_{\sigma_0(y)}\sigma_1(x) \\
                                             & =\rho^1_0(y)w -\rho_1(x)v +\Lie_{\sigma_0(y)}\sigma_1(x) ,
\end{align*}
Since $\pi$ is a crossed module map,
\begin{align*}
\pi_1(\rho^1_0(y)w -\rho_1(x)v +\Lie_{\sigma_0(y)}\sigma_1(x)) & =\pi_1(\Lie^\epsilon_{\sigma_0(y)}\sigma_1(x)) \\
                                                               & =\Lie^\mu_{\pi_0(\sigma_0(y))}\pi_1(\sigma_1(x))=\Lie_y x
\end{align*}
thus, the action is given by the equation
\begin{eqnarray*}
\Lie_{(y,v)}(x,w):=(\Lie _y x,\rho^1_0(y)w -\rho_1(x)v -\alpha(y;x))
\end{eqnarray*}
where $\alpha(y;x):=\sigma_1(\Lie _y x)-\Lie^\epsilon_{\sigma_0(y)}\sigma_1(x)$.\\
Using this data and the infinitesimal Peiffer equation for $\mathfrak{e}$, one readily sees that 
\begin{eqnarray*}
\omega_1(x_0,x_1)=\rho_1(x_1)(\varphi(x_0))+\alpha(\mu(x_0);x_1).
\end{eqnarray*}
\begin{proof}(of Proposition \ref{Ind2Rep})
We make the computations necessary to prove that $\rho^{\epsilon}_{\sigma}$ is a $2$-representation.
\begin{itemize}
\item Well-defined: We use the exactness of the sequences to see that the maps land where they are supposed to.
\begin{eqnarray*}
\pi_0([\sigma_0(y),v]_{\mathfrak{e}_0})=[\pi_0(\sigma_0(y)),\pi_0(v)]_\hh =[y,0]_\hh =0 & \Longrightarrow & \rho_0^0(y)v\in V, \\
\pi_1(\Lie^{\epsilon}_{\sigma_0(y)} w)=\Lie_{\pi_0(\sigma_0(y))}\pi_1(w) =\Lie_y 0   =0 & \Longrightarrow & \rho_0^1(y)w\in W, \\
\pi_1(-\Lie^{\epsilon}_v\sigma_1 (x))=-\Lie_{\pi_0(v)}\pi_1(\sigma_1 (x)) =-\Lie_0 x =0 & \Longrightarrow & \rho_1(x)v\in W. 
\end{eqnarray*}
Further, thus defined, $\rho_0^0(y)\circ\phi=\phi\circ\rho_0^1(y)$ for each $y\in\hh$. Indeed,
\begin{align*}
\rho_0^0(y)(\phi(w)) & =[\sigma_0(y),\phi(w)]_{\mathfrak{e}_0} =[\sigma_0(y),\epsilon(w)]_{\mathfrak{e}_0} \\
                     & =\epsilon(\Lie^{\epsilon}_{\sigma_0(y)} w) \\
                     & =\phi(\Lie^{\epsilon}_{\sigma_0(y)} w)=\phi(\rho_0^1(y)w);                   
\end{align*}
in these equations, we used that $\epsilon\circ j_1=j_0\circ\phi$, that $\Lie^{\epsilon}_{\sigma_0(y)}w\in W$ and the equivariance for the crossed module $\epsilon$. This proves that for all $y\in \hh$, $\rho_0(y):=(\rho_0^0(y),\rho_0^1(y))\in\ggl (\phi)_0$, as desired.
\item $\rho^0_0$ Lie algebra homomorphism: We write each side of the equation, and then justify why their difference is zero.
\begin{align*}
\rho^0_0([y_0,y_1]_\hh)v & =[\sigma_0([y_0,y_1]_\hh),v]_{\mathfrak{e}_0},
\end{align*}
and
\begin{align*}
[\rho^0_0(y_0),\rho^0_0(y_1)]v & =\rho^0_0(y_0)\rho^0_0(y_1)v-\rho_0^0(y_1)\rho_0^0(y_0)v \\
                               & =\rho^0_0(y_0)([\sigma_0(y_1),v]_{\mathfrak{e}_0})-\rho_0^0(y_1)([\sigma_0(y_0),v]_{\mathfrak{e}_0}) \\
                               & =[\sigma_0(y_0),[\sigma_0(y_1),v]_{\mathfrak{e}_0}]_{\mathfrak{e}_0}-[\sigma_0(y_1),[\sigma_0(y_0),v]_{\mathfrak{e}_0}]_{\mathfrak{e}_0} \\
                               & =[[\sigma_0(y_0),\sigma_0(y_1)]_{\mathfrak{e}_0},v]_{\mathfrak{e}_0},
\end{align*}                               
where the last equality is due to the Jacobi identity. Then, considering the difference, we get
\begin{align*}
(\rho_0^0([y_0,y_1]_\hh)-[\rho^0_0(y_0),\rho^0_0(y_1)])v & =[\omega_0(y_0,y_1),v]_{\mathfrak{e}_0},
\end{align*}
which is zero given that $\omega_0(y_0,y_1)\in V$.
\item $\rho_0^1$ Lie algebra homomorphism: We proceed with the same strategy.
\begin{align*}
\rho_0^1([y_0,y_1]_\hh)w & =\Lie^{\epsilon}_{\sigma_0([y_0,y_1]_\hh)} w,
\end{align*}
and
\begin{align*}
[\rho_0^1(y_0),\rho_0^1(y_1)]w & =\rho_0^1(y_0)\rho_0^1(y_1)w-\rho_0^1(y_1)\rho_0^1(y_0)w \\
                               & =\Lie^{\epsilon}_{\sigma_0(y_0)}\Lie^{\epsilon}_{\sigma_0(y_1)}w-\Lie^{\epsilon}_{\sigma_0(y_1)}\Lie^{\epsilon}_{\sigma_0(y_0)}w \\
                               & =[\Lie^{\epsilon}_{\sigma_0(y_0)},\Lie^{\epsilon}_{\sigma_0(y_1)}]w =\Lie^{\epsilon}_{[\sigma_0(y_0),\sigma_0(y_1)]_{\mathfrak{e}_0}}w,
\end{align*}                               
where the last line is because $\Lie^\epsilon$ is a Lie algebra action. Then, considering the difference, we get
\begin{align*}
(\rho_0^1([y_0,y_1]_\hh)-[\rho_0^1(y_0),\rho_0^1(y_1)])w & =\Lie^{\epsilon}_{\omega_0(y_0,y_1)} w,
\end{align*}
which is zero given that $\omega_0(y_0,y_1)\in V$.
\item $\rho_1$ Lie algebra homomorphism: We proceed with the same strategy.
\begin{align*}
\rho_1([x_0,x_1]_\gg)v & =-\Lie^{\epsilon}_v\sigma_1([x_0,x_1]_\gg) ,
\end{align*}
and
\begin{align*}
[\rho_1(x_0),\rho_1(x_1)]_\phi v & =\rho_1(x_0)\phi\rho_1(x_1)v-\rho_1(x_1)\phi\rho_1(x_0)v \\
                                 & =-\rho_1(x_0)\epsilon(\Lie^{\epsilon}_v\sigma_1(x_1))+\rho_1(x_1)\epsilon(\Lie^{\epsilon}_v\sigma_1(x_0)) \\
                                 & =-\rho_1(x_0)[v,\epsilon(\sigma_1(x_1))]_{\mathfrak{e}_0}+\rho_1(x_1)[v,\epsilon(\sigma_1(x_0))]_{\mathfrak{e}_0} \\
                                 & =\Lie^{\epsilon}_{[v,\epsilon(\sigma_1(x_1))]_{\mathfrak{e}_0}}\sigma_1(x_0)-\Lie^{\epsilon}_{[v,\epsilon(\sigma_1(x_0))]_{\mathfrak{e}_0}}\sigma_1(x_1). 
\end{align*}                               
Now, 
\begin{align*}
\Lie^{\epsilon}_{[v,\epsilon(\sigma_1(x_1))]_{\mathfrak{e}_0}}\sigma_1(x_0) & 
                                    =\Lie^{\epsilon}_v\Lie^{\epsilon}_{\epsilon(\sigma_1(x_1))}\sigma_1(x_0)-\Lie^{\epsilon}_{\epsilon(\sigma_1(x_1))}\Lie^{\epsilon}_v\sigma_1(x_0) \\
                                  & =\Lie^{\epsilon}_v[\sigma_1(x_1),\sigma_1(x_0)]_{\mathfrak{e}_1}-[\sigma_1(x_1),\Lie^{\epsilon}_v\sigma_1(x_0)]_{\mathfrak{e}_1},
\end{align*} 
thanks to the infinitesimal Peiffer equation, and consequently,
\begin{align*}
[\rho_1(x_0),\rho_1(x_1)]_\phi v & =\Lie^{\epsilon}_v[\sigma_1(x_1),\sigma_1(x_0)]_{\mathfrak{e}_1}-[\sigma_1(x_1),\Lie^{\epsilon}_v\sigma_1(x_0)]_{\mathfrak{e}_1}\\
                                 & \qquad -\Lie^{\epsilon}_v[\sigma_1(x_0),\sigma_1(x_1)]_{\mathfrak{e}_1}+[\sigma_1(x_0),\Lie^{\epsilon}_v\sigma_1(x_1)]_{\mathfrak{e}_1} \\
                                 & =-2\Lie^{\epsilon}_v[\sigma_1(x_0),\sigma_1(x_1)]_{\mathfrak{e}_1}+\Lie^{\epsilon}_v[\sigma_1(x_0),\sigma_1(x_1)]_{\mathfrak{e}_1} \\
                                 & =-\Lie^{\epsilon}_v[\sigma_1(x_0),\sigma_1(x_1)]_{\mathfrak{e}_1},
\end{align*} 
where the second equality follows since $\Lie^\epsilon$ is an action by derivations. Then, considering the difference, we get
\begin{align*}
(\rho_1([x_0,x_1]_\gg)-[\rho_1(x_0),\rho_1(x_1)])v & =-\Lie^{\epsilon}_v\omega_1(x_0,x_1),
\end{align*}
which is zero given that $\omega_1(x_0,x_1)\in W$.
\item $\rho_0\circ\mu =\Delta\circ\rho_1$: This equation breaks into two components, one in $\ggl (W)$ and one in $\ggl (V)$; namely, 
\begin{eqnarray*}
\rho_0^1(\mu(x))=\rho_1(x)\circ\phi , & \rho_0^0(\mu(x))=\phi\circ\rho_1(x) ,
\end{eqnarray*} 
for each $x\in\gg$. These relations follow as, using the strategy above,
\begin{align*}
\rho_0^1(\mu(x))w = \Lie^{\epsilon}_{\sigma_0(\mu(x))}w,\quad & \quad\rho_0^0(\mu(x))v = [\sigma_0(\mu(x)),v]_{\mathfrak{e}_0} ,
\end{align*}
and
\begin{align*}
\rho_1(x)\phi(w) & =-\Lie^{\epsilon}_{\phi(w)}\sigma_1(x)\quad & \quad\phi(\rho_1(x)v) & =\phi(-\Lie^{\epsilon}_v\sigma_1(x))    \\
                 & =-\Lie^{\epsilon}_{\epsilon(w)}\sigma_1(x) &                   & =-\epsilon(\Lie^{\epsilon}_v\sigma_1(x))     \\
                 & =-[w,\sigma_1(x)]_{\mathfrak{e}_1}&                            & =-[v,\epsilon(\sigma_1(x))]_{\mathfrak{e}_0} \\
                 & =[\sigma_1(x),w]_{\mathfrak{e}_1}&                             & =[\epsilon(\sigma_1(x)),v]_{\mathfrak{e}_0}. \\
                 & =\Lie^{\epsilon}_{\epsilon(\sigma_1(x))}w,
\end{align*}
Thus, considering the respective differences, we get
\begin{align*}
(\rho_1(x)\phi -\rho_0^1(\mu(x)))w & =\Lie^{\epsilon}_{\varphi(x)}w,\quad & \quad(\phi\rho_1(x)-\rho_0^0(\mu(x)))v & = [\varphi(x),v]_{\mathfrak{e}_0}
\end{align*}
which are both zero, since $\varphi(x)\in V$ as desired.
\item $\rho_1$ respects the actions: One last time.
\begin{align*}
\rho_1(\Lie_y x)v & =-\Lie^{\epsilon}_v\sigma_1(\Lie_y x),
\end{align*}
and
\begin{align*}
(\Lie^{\phi}_{\rho_0(y)}\rho_1(x))v & =\rho_0^1(y)\rho_1(x)v-\rho_1(x)\rho_0^0(y)v \\
                                    & =\rho_0^1(y)(-\Lie^{\epsilon}_v\sigma_1(x))-\rho_1(x)[\sigma_0(y),v]_{\mathfrak{e}_0} \\
                                    & =-\Lie^{\epsilon}_{\sigma_0(y)}\Lie^{\epsilon}_v\sigma_1(x)+\Lie^{\epsilon}_{[\sigma_0(y),v]_{\mathfrak{e}_0}}\sigma_1(x) \\
                                    & =-\Lie^{\epsilon}_v\Lie^{\epsilon}_{\sigma_0(y)}\sigma_1(x)
\end{align*}                               
where the last equality is again since $\Lie^\epsilon$ is a Lie algebra action. Then, considering the difference, we get
\begin{align*}
(\rho_1(\Lie_y x)-\Lie^{\phi}_{\rho_0(y)}\rho_1(x))v & =-\Lie^{\epsilon}_v\alpha(y;x),
\end{align*}
which is zero given that $\alpha(y;x)\in W$.
\end{itemize}
\end{proof} 
Notice that in the case where the splitting can be taken to be a crossed module morphism, and accordingly $\omega$, $\alpha$ and $\varphi$ vanish, the given formulas for the crossed module structure on the extension coincide with those of the semi-direct sum defined right after proposition \ref{honestAlgRep}.


\section{The complex of Lie 2-algebra cochains with values in a 2-representation}
Let $\gg_1:=\xymatrix{\gg \ar[r]^{\mu} & \hh}$ be a Lie $2$-algebra and $\rho$ be a $2$-representation on the $2$-vector space $\xymatrix{W \ar[r]^{\phi} & V}$ with components $\rho_0^0$, $\rho_0^1$, $\rho_1$, as before. In this section, we define the complex of $\gg_1$ cochains with values in $\rho$, $(C_{tot}(\gg_1,\phi),\nabla)$, and prove the main theorem.
\begin{theorem}\label{The2AlgCx}
$C_{tot}(\gg_1,\phi)$ graded by
\begin{eqnarray*}
C^{n}_{tot}(\gg_1,\phi)=\bigoplus_{p+q+r=n}C^{p,q}_r(\gg_1,\phi)
\end{eqnarray*}
together with the differential
\begin{eqnarray*}
\nabla=\delta^{(r)}+(-1)^q\delta_{(1)}+(-1)^{q+r}\partial+(-1)^r\sum_k\Delta_k
\end{eqnarray*}
is a complex.
\end{theorem}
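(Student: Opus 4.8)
The plan is to prove that $\nabla$ is a genuine differential, that is, that $\nabla^2=0$. Since each of the four summands of $\nabla$ is homogeneous for the triple grading $(p,q,r)$ — $\partial$ raises $p$ along the nerve, the two Chevalley--Eilenberg pieces $\delta^{(r)}$ and $\delta_{(1)}$ raise $q$, and $\sum_k\Delta_k$ acts in the $r$-direction through the structural map $\Delta$ of the coefficient complex $\ggl(\phi)_1\to\ggl(\phi)_0$ — the operator $\nabla^2$ splits into components, each mapping into a definite multidegree. First I would expand $\nabla^2$ and sort its sixteen terms according to the multidegree into which they map. The sign prefactors $(-1)^q$, $(-1)^{q+r}$ and $(-1)^r$ are exactly the Koszul signs that force the mixed terms to cancel in pairs, so the whole computation reduces to a short list of ``square'' identities and ``anticommutator'' identities.

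Several of the square identities are already available and I would simply invoke them. The simplicial relation $\partial^2=0$ holds because $\partial=\sum_k(-1)^k\partial_k^*$ is assembled from the face maps of the nerve of a groupoid, which satisfy the usual simplicial identities. The relation $\delta^2=0$ along each column is precisely the statement that each $\gg_p$ is a Lie algebra, i.e.\ Proposition \ref{Lie=DGA} applied columnwise. Likewise, the graded commutativity of $\delta$ and $\partial$ is nothing but the assertion that \eqref{doubleTriv} is a genuine double complex, which has been established already. What remains genuinely new are the identities involving $\sum_k\Delta_k$: that it squares to zero and that it anticommutes, up to the prescribed signs, with $\partial$ and with the two Chevalley--Eilenberg operators.

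I would treat the $\Delta_k$ relations by unwinding the definition of $\Delta$ together with the structural equations of a $2$-representation. The self-relation $\bigl(\sum_k\Delta_k\bigr)^2=0$ should follow from positional combinatorics combined with the fact that $\Delta$ is a morphism of the coefficient complex. The cross relations $\partial\Delta_k=\pm\Delta_k\partial$ and $\delta\Delta_k=\pm\Delta_k\delta$ are where the content lies: they require the compatibilities $\rho_0^0(\mu(x))=\phi\rho_1(x)$ and $\rho_0^1(\mu(x))=\rho_1(x)\phi$, the equivariance $\Delta\circ\rho_1=\rho_0\circ\mu$ recorded in the proof of Proposition \ref{honestAlgRep}, and the identity $\rho_1(\Lie_yx)=\rho_0^1(y)\rho_1(x)-\rho_1(x)\rho_0^0(y)$ together with the infinitesimal Peiffer relation. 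The splitting of the Chevalley--Eilenberg differential into $\delta^{(r)}$ and $\delta_{(1)}$ reflects the two ways in which the coefficient complex enters the twisted differential, and the cross-terms among $\delta^{(r)}$, $\delta_{(1)}$ and $\Delta_k$ must be tracked simultaneously.

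The main obstacle I anticipate is exactly this last bookkeeping. With three indices in play and two distinct Chevalley--Eilenberg pieces, the number of mixed terms is large, and the cancellations rest on the interplay between the sign conventions and the $2$-representation equations rather than on any single clean identity. Concretely, I expect the hardest cancellation to be the one coupling $\delta_{(1)}$, $\partial$ and $\sum_k\Delta_k$, where the arrow-level component $\rho_1$ mediates between the $W$- and $V$-valued parts of the coefficients; checking that these contributions cancel pairwise, with the signs $(-1)^q$, $(-1)^{q+r}$ and $(-1)^r$ doing precisely the work required, is the crux of the argument.
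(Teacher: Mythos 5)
Your plan reduces $\nabla^2=0$ to a list of independent identities -- each operator squares to zero and each pair (anti)commutes up to the Koszul signs -- but several of those identities are false, and recognizing \emph{which} ones fail is the actual content of the theorem. Most importantly, you assert that the graded commutativity of $\delta^{(r)}$ and $\partial$ ``is nothing but the assertion that (\ref{doubleTriv}) is a genuine double complex, which has been established already.'' That double complex has \emph{trivial} coefficients; with values in a $2$-representation the commutator $\delta^{(r)}\partial-\partial\delta^{(r)}$ does not vanish. It equals the homotopy $\delta_{(1)}\Delta+\Delta\delta_{(1)}$ (Propositions \ref{AlgUpToHomotopy r=0} and \ref{starTop}), and this failure is precisely the reason the difference maps $\Delta_k$ appear in $\nabla$ at all. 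Likewise your claims that $\bigl(\sum_k\Delta_k\bigr)^2=0$ on its own and that $\delta\Delta_k=\pm\Delta_k\delta$ and $\partial\Delta_k=\pm\Delta_k\partial$ hold individually are not correct: one has $\Delta^2=-(\Delta_2\partial+\partial\Delta_2)$, $\delta\Delta+\Delta\delta^{(1)}=\Delta_2\delta_{(1)}$ (Lemma \ref{firstHigher}), and more generally $\delta^{(r-k)}\Delta_k+\Delta_k\delta^{(r)}=\delta_{(1)}\Delta_{k+1}+\Delta_{k+1}\delta_{(1)}$ and $\sum_{i=0}^k\Delta_{k-i}\Delta_i=0$ with $\Delta_0=\partial$ (Theorem \ref{algDiffs}). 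In other words the data form a multicomplex, not a triple complex: each multidegree component of $\nabla^2$ collects contributions from several distinct pairs of operators, and only the \emph{sum} of those contributions vanishes. A proof organized around pairwise cancellations would stall exactly at the $C^{p+1,q+1}_r$ component, where $\delta^{(r)}\partial-\partial\delta^{(r)}$, $\delta_{(1)}\Delta$ and $\Delta\delta_{(1)}$ must be combined.

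A secondary but related confusion: the difference maps $\Delta_k$ are not ``the structural map $\Delta$ of the coefficient complex $\ggl(\phi)_1\to\ggl(\phi)_0$'' acting in the $r$-direction. They are defined by evaluating a cochain on the leading components $x^0_{a_1},\dots,x^0_{a_k}$ of composable strings of arrows (with a $\phi$ inserted when $r=k$), and they are homogeneous of tridegree $(+1,+k,-k)$, so they simultaneously move in the $p$-, $q$- and $r$-directions; also $\delta_{(1)}$ raises $r$, not $q$. Getting these degrees right is what dictates which terms of $\nabla^2$ can interact, and hence which grouped identities one must prove. The correct skeleton of the argument is: the three genuine double-complex statements (Propositions \ref{p-pagDblCx} and \ref{q-pagDblCx}, plus $\partial^2=0$, $(\delta^{(r)})^2=0$, $\delta_{(1)}^2=0$), the homotopy relations of Propositions \ref{AlgUpToHomotopy r=0} and \ref{starTop}, the anticommutation $\partial\Delta+\Delta\partial=0$ of Proposition \ref{partialDelta}, and then the inductive family of identities in Theorem \ref{algDiffs} involving all higher $\Delta_k$.
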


From the grading by ``counter-diagonal planes'', one could anticipate that the complex comes as the total complex of a triple complex of sorts. In fact, as we will spell out shortly, the first three terms in the differential $\nabla$ are respectively the differentials of complexes in the $q$, $r$ and $p$-directions; however, as not all these differentials commute, one is forced to introduce the \textit{difference maps} $\Delta_k$. \\
We will assume the convention that $\gg_0 =\hh$. For $r\neq 0$,
\begin{eqnarray*}
C^{p,q}_r(\gg _1,\phi):=\bigwedge ^q\gg _p^*\otimes\bigwedge ^r\gg ^*\otimes W;
\end{eqnarray*}
whereas, 
\begin{eqnarray*}
C^{p,q}_0(\gg _1,\phi):=\bigwedge ^q\gg _p^*\otimes V.
\end{eqnarray*}
This three dimensional lattice of vector spaces comes together with a grid of maps that is a complex in each direction. We now proceed to describe them. \\
For constant $p$ and $r$, we have the complex
\begin{eqnarray*}
\xymatrix{
\delta^{(r)}:\bigwedge ^\bullet\gg _p^*\otimes\bigwedge ^r\gg ^*\otimes W \ar[r] & \bigwedge ^{\bullet+1}\gg _p^*\otimes\bigwedge ^r\gg ^*\otimes W
}
\end{eqnarray*}
in the $q$-direction. For $r=0$, this is the Chevalley-Eilenberg complex of $\gg_p$ with values on $V$. The induced representation $\rho_p$ of $\gg_p$ on $V$ is the pull-back of the honest representation $\rho_0^0$ along the ``final target'' map, which after identifying $\gg_p$ with $\gg^p\oplus\hh$, can be written as
\begin{eqnarray*}
\xymatrix{
\hat{t}_p:\gg_p \ar[r] & \hh :(x_1,...,x_p,y) \ar@{|->}[r] & y+\sum_{j=1}^p\mu(x_j).
}
\end{eqnarray*}
On the other hand, for $r\neq 0$, the complex is defined to be the Chevalley-Eilenberg complex of $\gg_p$ with values on $\bigwedge^r\gg^*\otimes W$. This time around, the representation is the pull-back along $\hat{t}_p$ of
\begin{eqnarray*}
\xymatrix{
\rho^{(r)}:\hh \ar[r] & \ggl (\bigwedge ^r\gg ^*\otimes W)
}
\end{eqnarray*}
given for $\omega\in\bigwedge ^r\gg ^*\otimes W$, $x_1,...,x_r\in\gg$ and $y\in\hh$ by
\begin{eqnarray*}
\rho ^{(r)}(y)\omega(x_1,...,x_r):=\rho_0^1(y)\omega(x_1,...,x_r)-\sum_{k=1}^r\omega(x_1,...,\Lie _y x_k,...,x_r).
\end{eqnarray*}
\begin{remark}
We could have alse taken the pull-back along the ``initial source'' map, which is the reversal $\hat{s}_p=\hat{t}_p\circ\hat{\iota}$. There is no economy in working with either representation: one will always pay a computational price somewhere. 
\end{remark}
\begin{remark}
The representation $\rho^{(r)}$ has already appeared in the literature \cite{HochSer}. The form in which they appear is restricted to the example of the inclusion of an ideal, regarded as a Lie $2$-algebra. 
\end{remark}
For constant $q$ and $r$, we have the complex
\begin{eqnarray*}
\xymatrix{
\partial:\bigwedge^q\gg_\bullet^*\otimes\bigwedge ^r\gg ^*\otimes W \ar[r] & \bigwedge^q\gg_{\bullet+1}^*\otimes\bigwedge ^r\gg ^*\otimes W
}
\end{eqnarray*}
in the $p$-direction. This is a subcomplex of the groupoid cochain complex of $\xymatrix{\gg _1^q \ar@<0.5ex>[r] \ar@<-0.5ex>[r] & \hh^q}$ whose differential is defined by the same formulae of the horizontal maps in diagram (\ref{doubleTriv}) above, but taking sums in $V$ for $r=0$ or in  $\bigwedge^r\gg^*\otimes W$ rather than in $\Rr$. \\
For constant $p$ and $q$, we have the complex
\begin{eqnarray*}
\xymatrix{
\delta_{(1)}:\bigwedge^q\gg_p^*\otimes\bigwedge^{\bullet}\gg ^*\otimes W \ar[r] & \bigwedge^q\gg_p^*\otimes\bigwedge ^{\bullet+1}\gg^*\otimes W
}
\end{eqnarray*}
in the $r$-direction. This is the Chevalley-Eilenberg complex of $\gg$ with values on $\bigwedge^q\gg_p^*\otimes W$, though having an unusual $0$th degree. The representation is an extension of $\rho_0^1\circ\mu$. Explicitely,
\begin{eqnarray*}
\xymatrix{
\rho_{(1)}:\gg \ar[r] & \ggl(\bigwedge^q\gg _p^*\otimes W)
}
\end{eqnarray*}
is given for $\alpha\in\bigwedge^q\gg _p^*\otimes W$, $\Xi\in\gg_p^q$ and $x\in\gg$ by
\begin{eqnarray*}
\rho _{(1)}(x)\alpha(\Xi):=\rho_0^1(\mu(x))\alpha(\Xi).
\end{eqnarray*}
Since the $0$th degree is $\bigwedge^q\gg_p^*\otimes V$ instead of $\bigwedge^q\gg_p^*\otimes W$, we define the first differential to be
\begin{eqnarray*}
\xymatrix{
\delta_{(1)}:\bigwedge^q\gg_p^*\otimes V \ar[r] & \bigwedge^q\gg_p^*\otimes\gg^*\otimes W
} \\
\delta_{(1)}\omega(\Xi;x)=\rho_1(x)\omega(\Xi)\qquad\qquad
\end{eqnarray*}
for $\Xi\in\gg_p^q$ and $x\in\gg$. We prove that in spite of this replacement, there is a complex in the $r$-direction.
\begin{lemma}\label{Alg r-cx}
The map 
\begin{eqnarray*}
\xymatrix{
\delta_{(1)}:\bigwedge^q\gg_p^*\otimes V \ar[r] & \bigwedge^q\gg_p^*\otimes\gg^*\otimes W
}
\end{eqnarray*}
defined above, fits in the complex
\begin{eqnarray*}
\xymatrix{
\bigwedge^q\gg_p^*\otimes V \ar[r]^{\delta_{(1)}\quad} & \bigwedge^q\gg_p^*\otimes\gg^*\otimes W \ar[r] & \bigwedge^q\gg_p^*\otimes\bigwedge^2\gg^*\otimes W \ar[r] & ...
}
\end{eqnarray*}
of $\gg$ with values in $\rho_{(1)}$
\end{lemma}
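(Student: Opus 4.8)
The plan is to treat the bulk of the complex and its bottom interface separately. For $r\geq 1$ the displayed sequence is, by construction, the Chevalley--Eilenberg complex of $\gg$ with values in the coefficient module $\bigwedge^q\gg_p^*\otimes W$ under the map $\rho_{(1)}$. So the first thing I would verify is that $\rho_{(1)}$ is an honest representation of $\gg$: since $\rho_{(1)}(x)=\rho_0^1(\mu(x))$ acts only through the $W$-factor, the homomorphism property $\rho_{(1)}([x_0,x_1])=[\rho_{(1)}(x_0),\rho_{(1)}(x_1)]$ reduces to $\rho_0^1(\mu[x_0,x_1])=[\rho_0^1(\mu(x_0)),\rho_0^1(\mu(x_1))]$, which holds because $\mu$ is a Lie algebra morphism and $\rho_0^1$ is a representation of $\hh$. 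Once this is in place, $\delta_{(1)}^2=0$ on all cochains of degree $r\geq 1$ is immediate from the general theory of Lie algebra cohomology, so no computation is needed there.

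The only genuinely new identity to check is the vanishing of the composite at the very bottom,
\[
\bigwedge^q\gg_p^*\otimes V\xrightarrow{\ \delta_{(1)}\ }\bigwedge^q\gg_p^*\otimes\gg^*\otimes W\xrightarrow{\ \delta_{(1)}\ }\bigwedge^q\gg_p^*\otimes\bigwedge^2\gg^*\otimes W,
\]
since here the source module is $V$ rather than $W$ and the two differentials are given by different formulae (the first is multiplication by $\rho_1$, the second the ordinary Chevalley--Eilenberg operator for $\rho_{(1)}$). I would fix $\omega\in\bigwedge^q\gg_p^*\otimes V$, set $\eta=\delta_{(1)}\omega$ so that $\eta(\Xi;x)=\rho_1(x)\omega(\Xi)$, and apply the degree-$1$ Chevalley--Eilenberg formula to $\eta$; treating the factor $\bigwedge^q\gg_p^*$ as an inert spectator, evaluation on $x_0,x_1\in\gg$ gives
\[
(\delta_{(1)}^2\omega)(\Xi;x_0,x_1)=\big(\rho_0^1(\mu(x_0))\rho_1(x_1)-\rho_0^1(\mu(x_1))\rho_1(x_0)-\rho_1([x_0,x_1])\big)\omega(\Xi).
\]

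The final step is to show the operator in parentheses vanishes, and this is exactly where the $2$-representation axioms enter. Substituting the structural relation $\rho_0^1(\mu(x))=\rho_1(x)\phi$ into the first two terms rewrites them as $\rho_1(x_0)\phi\rho_1(x_1)-\rho_1(x_1)\phi\rho_1(x_0)$, which is precisely $\rho_1([x_0,x_1])$ because $\rho_1$ is a Lie algebra homomorphism into $\ggl(\phi)_1$; the three terms therefore cancel. I expect this bottom identity to be the only real content of the lemma: the higher degrees are formal once $\rho_{(1)}$ is recognized as a representation, whereas this interface calculation is the one place where the compatibility between the object-level component $\rho_0^1$ and the arrow-level component $\rho_1$ of the $2$-representation is genuinely used.
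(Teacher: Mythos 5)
Your proposal is correct and follows essentially the same route as the paper: the only computation the paper records is the bottom interface identity $\delta_{(1)}^2\omega(\Xi;x_0,x_1)=\rho_0^1(\mu(x_0))\rho_1(x_1)\omega(\Xi)-\rho_0^1(\mu(x_1))\rho_1(x_0)\omega(\Xi)-\rho_1([x_0,x_1])\omega(\Xi)$, dispatched exactly as you do via $\rho_0^1(\mu(x))=\rho_1(x)\circ\phi$ and the fact that $\rho_1$ is a Lie algebra homomorphism into $\ggl(\phi)_1$. Your preliminary check that $\rho_{(1)}$ is an honest representation is left implicit in the paper but is a worthwhile addition.
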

\begin{proof}
We prove that, for $\omega\bigwedge^q\gg_p^*\otimes V$, $\delta_{(1)}^2\omega=0$. Let $\Xi\in\gg_p^q$ and $x_0,x_1\in\gg$, then
\begin{align*}
\delta_{(1)}\delta_{(1)}\omega(\Xi;x_0,x_1) & =\rho_{(1)}(x_0)\delta_{(1)}\omega(\Xi;x_1)-\rho_{(1)}(x_1)\delta_{(1)}\omega(\Xi;x_0)-\delta_{(1)}\omega(\Xi;[x_0,x_1]) \\
                                            & =\rho_0^1(\mu(x_0))\rho_1(x_1)\omega(\Xi)-\rho_0^1(\mu(x_1))\rho_1(x_0)\omega(\Xi)-\rho_1([x_0,x_1])\omega(\Xi).
\end{align*} 
The result follows from $\rho_0^1(\mu(x_k))=\rho_1(x_k)\circ\phi$, since $\rho_1$ is a Lie algebra homomorphism landing in $\ggl(\phi)_1$.

\end{proof}
Finally, the $k$th difference map  
\begin{eqnarray*}
\xymatrix{
\Delta_{k}:C^{p,q}_r(\gg _1,\phi) \ar[r] & C^{p+1,q+k}_{r-k}(\gg _1,\phi)
}
\end{eqnarray*}
is defined by
\begin{eqnarray*}
\Delta_k\omega(\Xi;Z):=\sum_{a_1<...<a_k}(-1)^{a_1+...+a_k}\omega(\partial_0\Xi(a_1,...,a_k);x_{a_1}^0,...,x_{a_k}^0,Z)
\end{eqnarray*}
for $Z\in\gg^{r-k}$, $\Xi=(\xi_0,...,\xi_{q+k-1})\in\gg_{p+1}^{q+k}$ and $\xi_j$ is identified with $(x_j^0,...,x_j^p;y_j)$. Recall that by definition
\begin{eqnarray*}
\partial_k\Xi=(\partial_k\xi_0,...,\partial_k\xi_q);
\end{eqnarray*}
hence, trivially, $\partial_k(\Xi(a_1,...,a_k))=(\partial_k\Xi)(a_1,...,a_k)$, so one can drop the parenthesis. This shows that there is no ambiguity in the formula above. Notice further that all difference maps are homogeneous of degree $+1$ with respect to the diagonal grading; hence, it makes sense to use them as differentials. In the special case $r=k$, the map is essentially defined by the same formula, but composed with $\phi$, so that it takes values in the right vector space. We will sometimes drop the subindex for $k=1$. 
\begin{prop}\label{p-pagDblCx}
$(\delta^{(r)}\delta_{(1)}-\delta^{(r+1)}\delta_{(1)})\omega=0\in C^{p,q+1}_{r+1}(\gg_1,\phi)$; thus, for constant $p$, there is an actual double complex that we call the $p$-page.
\end{prop}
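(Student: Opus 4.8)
The plan is to read the displayed identity as the assertion that the $q$-direction and $r$-direction differentials commute on the $p$-page, namely $\delta^{(r+1)}\delta_{(1)}=\delta_{(1)}\delta^{(r)}$ as maps $C^{p,q}_r(\gg_1,\phi)\to C^{p,q+1}_{r+1}(\gg_1,\phi)$; this is the only pairing whose source and target degrees match the statement. Together with $\delta^{(\bullet)\,2}=0$, which holds because $\delta^{(r)}$ is the Chevalley--Eilenberg differential of $\gg_p$, and with $\delta_{(1)}^2=0$ from Lemma \ref{Alg r-cx}, this commutation is exactly what organizes the $(q,r)$-entries, for fixed $p$, into a double complex. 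I would verify it by directly expanding both composites through the Chevalley--Eilenberg formula, splitting into the generic range $r\geq 1$ and the bottom transition $r=0$, where $\delta_{(1)}$ is the exceptional map $\delta_{(1)}\omega(\Xi;x)=\rho_1(x)\omega(\Xi)$ and the coefficients jump from $V$ to $W$.

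For $r\geq 1$, $\delta^{(r)}$ is the Chevalley--Eilenberg differential of $\gg_p$ on the $\bigwedge^q\gg_p^*$-slot with coefficients $\bigwedge^r\gg^*\otimes W$ acted on by $\rho_p=\hat t_p^*\rho^{(r)}$, while $\delta_{(1)}$ is that of $\gg$ on the $\bigwedge^r\gg^*$-slot with coefficients $\bigwedge^q\gg_p^*\otimes W$ acted on by $\rho_{(1)}=\rho_0^1\circ\mu$. Expanding both composites, the sums over $\gg_p$-brackets $[\xi_m,\xi_n]$ and over $\gg$-brackets $[x_a,x_b]$, together with the argument-removal sums, act on disjoint slots and match termwise after reordering. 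The interactions that remain are those between $\rho^{(r)}(\hat t_p(\xi_j))$ --- which acts on $W$ by $\rho_0^1(\hat t_p(\xi_j))$ and coadjointly, by $-\Lie_{\hat t_p(\xi_j)}$, on the $\gg$-arguments --- and the $\gg$-operations of $\delta_{(1)}$. The commutator of the two $W$-actions is $\rho_0^1([\hat t_p(\xi_j),\mu(x_i)])=\rho_0^1(\mu(\Lie_{\hat t_p(\xi_j)}x_i))$ by equivariance of the crossed module, and this is cancelled precisely by the coadjoint terms of $\rho^{(r)}$ acting on the argument that $\delta_{(1)}$ contracts. Equivalently, $W$ is a module over the semidirect product $\gg\rtimes\gg_p$ (with $\gg_p$ acting on $\gg$ by $\Lie\circ\hat t_p$), and the claim is the standard commutation of the two differentials of the associated Hochschild--Serre double complex; I would either invoke this or complete the cross-term bookkeeping.

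The bottom transition $r=0$ is the step that genuinely uses the $2$-representation structure, and I expect it to be the main obstacle, since the nonstandard $V$-valued $0$th degree obstructs a direct appeal to the Hochschild--Serre picture. Here one expands $\delta^{(1)}\delta_{(1)}\omega$ and $\delta_{(1)}\delta^{(0)}\omega$ on $\xi_0,\dots,\xi_q\in\gg_p$ and $x\in\gg$. In the first composite the term produced by $\rho^{(1)}(\hat t_p(\xi_j))$ equals $\rho_0^1(\hat t_p(\xi_j))\rho_1(x)\omega(\cdots)-\rho_1(\Lie_{\hat t_p(\xi_j)}x)\omega(\cdots)$, and the intertwining relation $\rho_1(\Lie_y x)=\rho_0^1(y)\rho_1(x)-\rho_1(x)\rho_0^0(y)$ collapses this to $\rho_1(x)\rho_0^0(\hat t_p(\xi_j))\omega(\cdots)$. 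This is exactly $\rho_1(x)$ applied to the action term of $\delta^{(0)}\omega$, the Chevalley--Eilenberg differential of $\gg_p$ with values in $V$ via $\rho_0^0\circ\hat t_p$, while the bracket terms agree immediately because $\rho_1(x)$ factors out of the $\gg_p$-bracket sum; hence $\delta^{(1)}\delta_{(1)}\omega=\delta_{(1)}\delta^{(0)}\omega$. The delicate point throughout is that it is the full compatibility of the $2$-representation --- not merely two commuting ordinary representations --- namely $\rho_1(\Lie_y x)=\rho_0^1(y)\rho_1(x)-\rho_1(x)\rho_0^0(y)$ together with $\phi\rho_1=\rho_0^0\circ\mu$ and $\rho_1\phi=\rho_0^1\circ\mu$, that restores commutation across the $V$-to-$W$ jump and thereby yields the $p$-page.
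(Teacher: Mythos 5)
Your proposal is correct and follows essentially the same route as the paper: expand both composites, note that the bracket and contraction sums acting on the two disjoint slots match termwise, and reduce everything to the single commutation $\rho^{(r+1)}(y)\rho_0^1(\mu(z))=\rho_0^1(\mu(z))\rho^{(r)}(y)$, which holds because the equivariance $\mu(\Lie_y z)=[y,\mu(z)]$ makes the commutator $[\rho_0^1(y),\rho_0^1(\mu(z))]$ cancel against the term of $\rho^{(r+1)}(y)$ acting on the contracted argument. The one genuine addition is your separate treatment of the bottom transition $r=0$, where $\delta_{(1)}\omega(\Xi;x)=\rho_1(x)\omega(\Xi)$ and the paper's displayed expansion of $\delta_{(1)}$ does not literally apply; your use of $\rho_1(\Lie_y x)=\rho_0^1(y)\rho_1(x)-\rho_1(x)\rho_0^0(y)$ to collapse the action term to $\rho_1(x)\rho_0^0(\hat{t}_p(\xi_j))\omega(\Xi(j))$ correctly fills in that case, which the paper's written proof leaves implicit.
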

\begin{proof}
Let $\Xi=(\xi_0,...,\xi_q)\in\gg_{p}^{q+1}$ and $Z=(z_0,...,z_r)\in\gg^{r+1}$, then
\begin{align*}
\delta & ^{(r+1)}\delta_{(1)}\omega(\Xi;Z)=\sum_{j=0}^q(-1)^j\rho^{(r+1)}(\hat{t}_p(\xi_j))\delta_{(1)}\omega(\Xi(j);Z)+\sum_{m<n}(-1)^{m+n}\delta_{(1)}\omega([\xi_m,\xi_n],\Xi(m,n);Z) \\
             & =\sum_{j=0}^q(-1)^j\rho^{(r+1)}(\hat{t}_p(\xi_j))\Big{(}\sum_{k=0}^r(-1)^k\rho_0^1(\mu(z_k))\omega(\Xi(j);Z(k))+\sum_{a<b}(-1)^{a+b}\omega(\Xi(j);[z_a,z_b],Z(a,b))\Big{)}+ \\
             & \qquad +\sum_{m<n}(-1)^{m+n}\Big{(}\sum_{k=0}^r(-1)^k\rho_0^1(\mu(z_k))\omega([\xi_m,\xi_n],\Xi(m,n);Z(k))+ \\
             & \qquad\qquad +\sum_{a<b}(-1)^{a+b}\omega([\xi_m,\xi_n],\Xi(m,n);[z_a,z_b],Z(a,b))\Big{)} ;
\end{align*}
whereas, on the other hand,
\begin{align*}
\delta & _{(1)}\delta^{(r)}\omega(\Xi;Z)=\sum_{k=0}^r(-1)^k\rho_0^1(\mu(z_k))\delta^{(r)}\omega(\Xi;Z(k))+\sum_{a<b}(-1)^{a+b}\delta^{(r)}\omega(\Xi;[z_a,z_b],Z(a,b)) \\
             & =\sum_{k=0}^r(-1)^k\rho_0^1(\mu(z_k))\Big{(}\sum_{j=0}^q(-1)^j\rho^{(r)}(\hat{t}_p(\xi_j))\omega(\Xi(j);Z(k))+\sum_{m<n}(-1)^{m+n}\omega([\xi_m,\xi_n],\Xi(m,n);Z(k))\Big{)}+ \\
             & \qquad +\sum_{a<b}(-1)^{a+b}\Big{(}\sum_{j=0}^q(-1)^j\rho^{(r)}(\hat{t}_p(\xi_j))\omega(\Xi(j);[z_a,z_b],Z(a,b))+ \\
             & \qquad\qquad +\sum_{m<n}(-1)^{m+n}\omega([\xi_m,\xi_n],\Xi(m,n);[z_a,z_b],Z(a,b))\Big{)} .
\end{align*}
As a consequence, these expressions will coincide if, and only if 
\begin{eqnarray*}
\rho^{(r+1)}(\hat{t}_p(\xi_j))\rho_0^1(\mu(z_k))\omega(\Xi(j);Z(k))=\rho_0^1(\mu(z_k))\rho^{(r)}(\hat{t}_p(\xi_j))\omega(\Xi(j);Z(k)).
\end{eqnarray*}
Indeed, for all $y\in\hh$, $z\in\gg$ and $X=(x_1,...,x_r)\in\gg^r$, using the fact that $\rho_0^1$ is a Lie algebra homomorphism, we have got
\begin{align*}
 \rho^{(r+1)}(y)\rho_0^1(\mu(z))\omega(\Xi(j);X) & = \rho_0^1(y)\rho_0^1(\mu(z))\omega(\Xi(j);X)-\rho_0^1(\mu(\Lie_yz))\omega(\Xi(j);X) + \\
                                               & \qquad\qquad -\sum_{k=1}^r\rho_0^1(\mu(z))\omega(\Xi(j);x_1,...,\Lie_yx_k,...) \\
                                               & =\rho_0^1(y)\rho_0^1(\mu(z))\omega(\Xi(j);X)-\rho_0^1([y,\mu(z)])\omega(\Xi(j);X) + \\
                                               & \qquad\qquad -\sum_{k=1}^r\rho_0^1(\mu(z))\omega(\Xi(j);x_1,...,\Lie_yx_k,...) \\
                                               & =\rho_0^1(\mu(z))\rho_0^1(y)\omega(\Xi(j);X)-\rho_0^1(\mu(z))\sum_{k=1}^r\omega(\Xi(j);x_1,...,\Lie_yx_k,...) \\
                                               & =\rho_0^1(\mu(z))\rho^{(r)}(y)\omega(\Xi(j);X),
\end{align*}
so the result follows.

\end{proof}
\begin{prop}\label{q-pagDblCx}
$(\partial\delta_{(1)}-\delta_{(1)}\partial)\omega=0\in C^{p+1,q}_{r+1}(\gg_1,\phi)$; thus, for constant $q$, there is an actual double complex that we call the $q$-page.
\end{prop}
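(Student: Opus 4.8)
The plan is to expand both composites on a generic argument and verify that they agree term by term. Write $\Xi\in\gg_{p+1}^{q}$ for the simplicial variables and $Z=(z_0,\dots,z_r)\in\gg^{r+1}$ for the $\gg$-variables, so that both $\partial\delta_{(1)}\omega$ and $\delta_{(1)}\partial\omega$ are elements of $C^{p+1,q}_{r+1}(\gg_1,\phi)$ evaluated at $(\Xi;Z)$. The structural point I would exploit is that the two operators act on \emph{disjoint tensor factors}: since the $p$-direction carries trivial groupoid coefficients, $\partial=\sum_{k=0}^{p+1}(-1)^k\partial_k^*$ is the plain pull-back along the face maps, reindexing only the argument in $\gg_{p+1}$ and leaving both the $\bigwedge^\bullet\gg^*$-slot and the coefficient in $V$ or $W$ untouched; dually, $\delta_{(1)}$ touches only the $\bigwedge^\bullet\gg^*$-slot and the coefficient, through the representation $\rho_{(1)}$ (or $\rho_1$ in the bottom degree).

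First I would treat the generic range $r\geq 1$, where $\delta_{(1)}$ is the Chevalley--Eilenberg differential of $\gg$ with values in $\rho_{(1)}$. Expanding
\[
\partial\delta_{(1)}\omega(\Xi;Z)=\sum_{k=0}^{p+1}(-1)^k(\delta_{(1)}\omega)(\partial_k\Xi;Z)
\]
yields, for each $k$, the representation terms $(-1)^l\rho_0^1(\mu(z_l))\,\omega(\partial_k\Xi;Z(l))$ and the bracket-insertion terms $(-1)^{a+b}\omega(\partial_k\Xi;[z_a,z_b],Z(a,b))$. Expanding $\delta_{(1)}\partial\omega(\Xi;Z)$ produces literally the same two double sums, with the summation over $k$ and the summations over the $\gg$-indices interchanged. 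They match on the nose because $\rho_0^1(\mu(z_l))$ is a fixed endomorphism of $W$ independent of the evaluation point $\partial_k\Xi$, so it passes freely through the finite sum $\sum_k(-1)^k(\cdot)$, and because the bracket insertion alters only the $Z$-slot, which $\partial$ never sees. Hence the difference vanishes.

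It remains to handle the bottom step $r=0$, where $\delta_{(1)}\omega(\Xi;x)=\rho_1(x)\omega(\Xi)$ carries a $V$-valued cochain to a $W$-valued one. Here $\partial\delta_{(1)}\omega(\Xi;x)=\sum_k(-1)^k\rho_1(x)\,\omega(\partial_k\Xi)$ and $\delta_{(1)}\partial\omega(\Xi;x)=\rho_1(x)\sum_k(-1)^k\omega(\partial_k\Xi)$, which coincide since $\rho_1(x)\colon V\to W$ is a single linear map independent of $\Xi$. I would then note that the honest commutation $\partial\delta_{(1)}=\delta_{(1)}\partial$ is exactly what is needed for the signed pair $(-1)^q\delta_{(1)}$ and $(-1)^{q+r}\partial$ inside $\nabla$ to anticommute. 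In contrast with Proposition \ref{p-pagDblCx}, I expect no real obstacle: no nontrivial representation identity is required, precisely because the $r$-direction action factors through $\mu$ and acts only on the coefficient $W$, which is orthogonal to the simplicial direction differentiated by $\partial$. The only care needed is the routine separation of the generic step $r\geq 1$ from the exceptional bottom degree $r=0$.
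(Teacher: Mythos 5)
Your proof is correct and follows essentially the same route as the paper: expand both composites, interchange the finite sums, and observe that the coefficient operators ($\rho_0^1(\mu(z_l))$, the bracket insertion, and $\rho_1(x)$ in the bottom degree) are independent of the simplicial argument and hence pass through $\partial$. Your explicit separation of the exceptional degree $r=0$ is a small point of extra care that the paper's proof leaves implicit, but it does not change the argument.
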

\begin{proof}
Let $\Xi=(\xi_1,...,\xi^q)\in\gg_{p+1}^q$ and $Z=(z_0,...,z^r)\in\gg^{r+1}$, then
\begin{align*}
\partial\delta_{(1)}\omega(\Xi;Z) & =\sum_{k=0}^{p+1}(-1)^k\delta_{(1)}\omega(\partial_k\Xi;Z) \\
                                  & =\sum_{k=0}^{p+1}(-1)^k\Big{(}\sum_{j=0}^r(-1)^j\rho_0^1(\mu(z_j))\omega(\partial_k\Xi;Z(j))+\sum_{m<n}(-1)^{m+n}\omega(\partial_k\Xi;[z_m,z_n],Z(m,n))\Big{)} \\
                                  & =\sum_{j=0}^r(-1)^j\rho_0^1(\mu(z_j))\sum_{k=0}^{p+1}(-1)^k\omega(\partial_k\Xi;Z(j))+ \\
                                  & \qquad\quad +\sum_{m<n}(-1)^{m+n}\sum_{k=0}^{p+1}(-1)^k\omega(\partial_k\Xi;[z_m,z_n],Z(m,n)) \\
                                  & =\sum_{j=0}^r(-1)^j\rho_0^1(\mu(z_j))\partial\omega(\Xi;Z(j))+\sum_{m<n}(-1)^{m+n}\partial\omega(\Xi;[z_m,z_n],Z(m,n)) =\delta_{(1)}\partial\omega(\Xi;Z)
\end{align*}
\end{proof}
As it was pointed out, not all the differentials in the complexes commute with one another; indeed, $\delta^{(r)}$ needs not commute with $\partial$. For constant $r$, we call the resulting two dimensional grid an $r$-page. Essentially, $r$-pages coincide with the double complex of $\gg_1$ tensored with $\bigwedge^r\gg^*\otimes W$. For $r=0$, $\delta^{(r)}$ and $\partial$ commute up homotopy. More specifically, when evaluated, the resulting elements in $V$ are isomorphic in the $2$-vector space.  
\begin{prop}\label{AlgUpToHomotopy r=0}
Let $\omega\in\bigwedge^q\gg_p^*\otimes V$ and $\Xi=(\xi_0,...,\xi_q)\in\gg_{p+1}^{q+1}$. Then
\begin{eqnarray*}
\delta\partial\omega(\Xi)=\partial\delta\omega(\Xi)+\phi\Big{(}\sum_{j=0}^q(-1)^j\rho_1(x_j^0)\omega(\partial_0\Xi(j))\Big{)}=\partial\delta\omega(\Xi)+\Delta\delta_{(1)}\omega(\Xi)
\end{eqnarray*}
\end{prop}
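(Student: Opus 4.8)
The plan is to read the second equality off the definitions and to obtain the first as a computation of the commutator of $\delta$ and $\partial$. For the second equality, note that $\delta_{(1)}\omega(\Xi;x)=\rho_1(x)\omega(\Xi)$ takes values in $C^{p,q}_1(\gg_1,\phi)$, and that the difference map in the borderline case $r=k=1$ is by definition the alternating sum over the deleted slot post-composed with $\phi$. Thus
\[
\Delta\delta_{(1)}\omega(\Xi)=\phi\Big(\sum_{j=0}^q(-1)^j\delta_{(1)}\omega(\partial_0\Xi(j);x_j^0)\Big)=\phi\Big(\sum_{j=0}^q(-1)^j\rho_1(x_j^0)\omega(\partial_0\Xi(j))\Big),
\]
which is the middle expression; so everything reduces to the first equality.

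To establish it, I would expand $\delta\partial\omega(\Xi)$ and $\partial\delta\omega(\Xi)$ for a common $\Xi=(\xi_0,\dots,\xi_q)\in\gg_{p+1}^{q+1}$, using that here $\delta=\delta^{(0)}$ is the Chevalley--Eilenberg differential of $\gg_{p+1}$ (resp. $\gg_p$) with values in $V$ via the pullback of $\rho_0^0$ along $\hat{t}_{p+1}$ (resp. $\hat{t}_p$), and that $\partial=\sum_{k}(-1)^k\partial_k^*$ is the untwisted alternating sum of pullbacks. Each expansion splits into \emph{bracket terms} and \emph{representation terms}. Since the multiplication and the projections are Lie algebra homomorphisms, so is every face map $\partial_k:\gg_{p+1}\to\gg_p$; hence $\partial_k[\xi_m,\xi_n]=[\partial_k\xi_m,\partial_k\xi_n]$, and the bracket terms of the two expansions agree slot by slot and cancel. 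What remains is
\[
\delta\partial\omega(\Xi)-\partial\delta\omega(\Xi)=\sum_{j=0}^q\sum_{k=0}^{p+1}(-1)^{j+k}\big(\rho_0^0(\hat{t}_{p+1}(\xi_j))-\rho_0^0(\hat{t}_p(\partial_k\xi_j))\big)\omega(\partial_k\Xi(j)).
\]

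The crux is then to evaluate the defect $\hat{t}_{p+1}(\xi_j)-\hat{t}_p(\partial_k\xi_j)$ in coordinates $\xi_j=(x_j^0,\dots,x_j^p;y_j)$, where $y_j$ is the innermost source and $\hat{t}_\bullet$ returns the outermost target $y_j+\sum_i\mu(x_j^i)$. Writing the face maps in these coordinates---$\partial_0$ deletes the outermost arrow and keeps the base point, $\partial_k$ (for $0<k\le p$) replaces the adjacent $\gg$-slots $x_j^{k-1},x_j^k$ by their sum, and $\partial_{p+1}$ deletes the innermost arrow and shifts the base point by $\mu(x_j^p)$---one checks, using linearity of $\mu$, that every $k\ge 1$ preserves the total target, so only $k=0$ contributes, with defect $\mu(x_j^0)$. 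Hence the double sum collapses to $\sum_{j}(-1)^j\rho_0^0(\mu(x_j^0))\omega(\partial_0\Xi(j))$, and the $2$-representation identity $\rho_0^0\circ\mu=\phi\circ\rho_1$ turns this into $\phi\big(\sum_{j}(-1)^j\rho_1(x_j^0)\omega(\partial_0\Xi(j))\big)$, which is the first equality.

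The routine part is the two bracket-term expansions; the main obstacle is the coordinate bookkeeping in the last step: one must describe precisely how each $\partial_k$ acts on the tuple $(x_j^0,\dots,x_j^p;y_j)$ and confirm that the discrepancy between $\hat{t}_{p+1}$ and $\hat{t}_p\circ\partial_k$ is concentrated entirely in the $k=0$ summand. Once that telescoping is pinned down, both equalities follow immediately.
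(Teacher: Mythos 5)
Your proposal is correct and follows essentially the same route as the paper: expand both compositions, cancel the bracket terms using that the face maps are Lie algebra homomorphisms, and observe that $\hat{t}_p\circ\partial_k=\hat{t}_{p+1}$ for all $k\geq 1$ while the $k=0$ face produces the defect $\mu(x_j^0)$, which the identity $\rho_0^0\circ\mu=\phi\circ\rho_1$ converts into the stated correction term. The only (harmless) difference is that you spell out the second equality explicitly from the definition of $\Delta$ in the borderline case $r=k=1$, which the paper leaves implicit.
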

\begin{proof}
Identifying $\xi_j$ with $(x_j^0,...,x_j^p;y_j)$, we have got further identifications
\begin{eqnarray*}
\partial _k (\xi_j)\sim
  \begin{cases}
    (x_j^1,...,x_j^p;y_j)                     & \quad \text{if } k=0  \\
    (x_j^0,...,x_j^{k-1}+x_j^k,...,x_j^p;y_j) & \quad \text{if } 0<k\leq p\\
    (x_j^0,...,x_j^{p-1};y_j+\mu(x_j^p))      & \quad \text{if } k=p+1. 
  \end{cases}
\end{eqnarray*}
Now, computing
\begin{align*}
\delta\partial\omega(\Xi) & =\sum_{j=0}^{q}(-1)^j\rho_0^0(\hat{t}_{p+1}(\xi_j))\partial\omega(\Xi(j))+\sum_{m<n}(-1)^{m+n}\partial\omega([\xi_m,\xi_n],\Xi(m,n)) \\
						  & =\sum_{j=0}^{q}(-1)^j\rho_0^0(\hat{t}_{p+1}(\xi_j))\sum_{k=0}^{p+1}(-1)^k\omega(\partial_k\Xi(j))+\sum_{m<n}(-1)^{m+n}\sum_{k=0}^{p+1}(-1)^k\omega(\partial_k[\xi_m,\xi_n],\partial_k(\Xi(m,n)));
\end{align*}
while on the other hand,
\begin{align*}
\partial\delta\omega(\Xi) & =\sum_{k=0}^{p+1}(-1)^k\delta\omega(\partial_k\Xi) \\
						  & =\sum_{k=0}^{p+1}(-1)^k\Big{(}\sum_{j=0}^{q}(-1)^j\rho_0^0(\hat{t}_p(\partial_k\xi_j))\omega(\partial_k\Xi(j))+\sum_{m<n}(-1)^{m+n}\omega([\partial_k\xi_m,\partial_k\xi_n],(\partial_k\Xi)(m,n))\Big{)}.
\end{align*}
Notice that, again $\partial_k(\Xi(m,n))=(\partial_k\Xi)(m,n)$ trivially; therefore, since $\partial_k$ is a Lie algebra homomorphism for each $k$, the second term in the expressions above coincide. As for the first term, from the identifications above, one sees that 
\begin{eqnarray*}
\hat{t}_p(\partial_k\xi_j)=
  \begin{cases}
    y_j+\sum_{r=1}^p\mu(x_j^r)=\hat{t}_{p+1}(\xi_j)-\mu(x_j^0) & \quad \text{if } k=0  \\
    y_j+\sum_{r=0}^p\mu(x_j^r)=\hat{t}_{p+1}(\xi_j)            & \quad \text{otherwise}. 
  \end{cases}
\end{eqnarray*}
This together with the relation $\rho_0^0(\mu(x))=\phi\rho_1(x)$ coming from the $2$-representation imply that the difference is 
\begin{align*}
    (\delta\partial-\partial\delta)\omega(\Xi) & =\sum_{j=0}^q(-1)^j\rho_0^0(\mu(x_j^0))\omega(\partial_0\Xi(j)) \\
                                               & =\phi\Big{(}\sum_{j=0}^q(-1)^j\rho_1(x_j^0)\omega(\partial_0\Xi(j))\Big{)},
\end{align*}
as desired. 

\end{proof}
The previous proposition shows that it is possible for the $r$-pages to be double complexes. For instance, if $W=\lbrace 0\rbrace$, a $2$-representation amounts to a usual representation $\rho$ of $\hh/\mu(\gg)$ on $V$ (cf. example \ref{unitRep}) and the vanishing of the representations on the ideal $\mu(\gg)$ implies the page $r=0$ is a double complex. Another instance in which this holds is if the $2$-representation takes values on a Lie group bundle internal to the category of vector spaces, i.e. on a $2$-vector space $\xymatrix{W \ar[r]^0 & V}$. \\
For $r\neq 0$, the difference $\partial\delta^{(r)}-\delta^{(r)}\partial$ is not necessarily zero either. Instead, the next proposition shows that the compositions are homotopic as maps of complexes. 
\begin{prop}\label{starTop}
For $r\neq 0$,
\begin{eqnarray*}
\delta^{(r)}\partial-\partial\delta^{(r)}=\delta_{(1)}\circ\Delta+\Delta\circ\delta_{(1)}.
\end{eqnarray*}
\end{prop}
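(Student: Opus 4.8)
The plan is to evaluate both sides on an arbitrary $\omega\in C^{p,q}_r(\gg_1,\phi)$ at $\Xi=(\xi_0,\dots,\xi_q)\in\gg_{p+1}^{q+1}$, with $\xi_j$ identified with $(x_j^0,\dots,x_j^p;y_j)$, and at $Z=(z_1,\dots,z_r)\in\gg^r$, and then to match terms. This is the exact analogue of Proposition \ref{AlgUpToHomotopy r=0}, now carried out in the presence of the extra $\bigwedge^r\gg^*\otimes W$-slots, which merely carry along the new derivation terms supplied by $\rho^{(r)}$. First I would compute $(\delta^{(r)}\partial-\partial\delta^{(r)})\omega(\Xi;Z)$. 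Exactly as in the proof of Proposition \ref{AlgUpToHomotopy r=0}, the terms involving the brackets $[\xi_m,\xi_n]$ agree on both sides, because each face map $\partial_k$ is a Lie algebra homomorphism and $\partial_k(\Xi(m,n))=(\partial_k\Xi)(m,n)$; they cancel in the difference. The terms carrying $\rho^{(r)}(\hat{t}_\bullet(\xi_j))$ differ only through the dependence of the target on the face: using $\hat{t}_p(\partial_k\xi_j)=\hat{t}_{p+1}(\xi_j)$ for $k\neq 0$ and $\hat{t}_p(\partial_0\xi_j)=\hat{t}_{p+1}(\xi_j)-\mu(x_j^0)$, linearity of $\rho^{(r)}$ collapses the whole difference to the $k=0$ contribution
\[
(\delta^{(r)}\partial-\partial\delta^{(r)})\omega(\Xi;Z)=\sum_{j=0}^q(-1)^j\,\rho^{(r)}(\mu(x_j^0))\,\omega(\partial_0\Xi(j);Z).
\]

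Next I would expand this surviving term by the defining formula for $\rho^{(r)}$ and invoke the infinitesimal Peiffer identity $\Lie_{\mu(x_j^0)}z_l=[x_j^0,z_l]$, splitting it into a \emph{representation part} $\sum_j(-1)^j\rho_0^1(\mu(x_j^0))\omega(\partial_0\Xi(j);Z)$ and a \emph{derivation part} $-\sum_j(-1)^j\sum_{l=1}^r\omega(\partial_0\Xi(j);z_1,\dots,[x_j^0,z_l],\dots,z_r)$. The task is then to recover exactly these two from the right-hand side. Writing $\Delta\delta_{(1)}\omega(\Xi;Z)=\sum_a(-1)^a(\delta_{(1)}\omega)(\partial_0\Xi(a);x_a^0,z_1,\dots,z_r)$ and organising $\delta_{(1)}\omega$ according to whether the distinguished slot $x_a^0$ is consumed by the representation or by a bracket, I expect four families of terms: the $\rho_0^1(\mu(x_a^0))$-term, the $\rho_0^1(\mu(z_l))$-terms with $l\ge 1$, the $[x_a^0,z_n]$-bracket terms, and the $[z_m,z_n]$-bracket terms with $m,n\ge 1$. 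Expanding $\delta_{(1)}\Delta\omega$ likewise produces only $\rho_0^1(\mu(z_l))$-terms and $[z_m,z_n]$-terms.

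The crux is a sign bookkeeping: the $\rho_0^1(\mu(z_l))$-terms of $\delta_{(1)}\Delta\omega$ cancel against the $l\ge 1$ representation terms of $\Delta\delta_{(1)}\omega$ (the signs differ by the index shift, $(-1)^{l-1}$ versus $(-1)^{l}$), and the $[z_m,z_n]$-terms of $\delta_{(1)}\Delta\omega$ cancel against the $m,n\ge 1$ bracket terms of $\Delta\delta_{(1)}\omega$ once one uses that $\omega$ is alternating in its $\gg$-slots to swap $x_a^0$ past $[z_m,z_n]$ at the cost of a sign. What survives from $\delta_{(1)}\Delta\omega+\Delta\delta_{(1)}\omega$ is precisely the $\rho_0^1(\mu(x_a^0))$-term and the $[x_a^0,z_n]$-bracket term of $\Delta\delta_{(1)}\omega$.

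Finally I would match these survivors with the two parts isolated above. The representation parts agree verbatim. For the derivation parts, moving the inserted argument $[x_a^0,z_n]$ from the leading slot into its natural $n$-th position costs $(-1)^{n-1}$ transpositions by alternation, and together with the ambient sign $(-1)^n$ this yields the factor $(-1)^{2n-1}=-1$, turning the surviving bracket terms into exactly $-\sum_a(-1)^a\sum_n\omega(\partial_0\Xi(a);z_1,\dots,[x_a^0,z_n],\dots,z_r)$, which is the derivation part. The two sides thus coincide. The only genuine difficulty is the sign accounting across the $0$-based and $1$-based indexings together with the alternation moves; the conceptual content is identical to the $r=0$ case of Proposition \ref{AlgUpToHomotopy r=0}.
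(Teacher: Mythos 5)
Your proposal is correct and follows essentially the same route as the paper: both reduce the left-hand side to the single $k=0$ face contribution $\sum_j(-1)^j\rho^{(r)}(\mu(x_j^0))\omega(\partial_0\Xi(j);Z)$ via the infinitesimal Peiffer identity, and then match this with $\delta_{(1)}\Delta+\Delta\delta_{(1)}$ by the same term-by-term cancellation (the paper organises it as recognising $\mathrm{LHS}-\delta_{(1)}\Delta$ as the Chevalley--Eilenberg differential evaluated on the extended tuple $Z^j=(x_j^0,z_1,\dots,z_r)$, which is your cancellation read in reverse). The only point worth noting is that for $r=1$ the map $\Delta$ lands in the $V$-valued column, so expanding $\delta_{(1)}\Delta\omega$ there requires the relation $\rho_1(z)\phi=\rho_0^1(\mu(z))$ --- a case the paper treats separately and which your uniform description silently absorbs.
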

\begin{proof}
Just as in the proof of proposition \ref{AlgUpToHomotopy r=0} and the argument right after, we compute
\begin{align*}
(\delta^{(r)}\partial-\partial\delta^{(r)})\omega(\Xi ;Z) & =\sum_{j=0}^q(-1)^j\rho^{(r)}(\mu(x_j^0))\omega(\partial_0\Xi(j);Z) \\
                                          & =\sum_{j=0}^q(-1)^j\Big{(}\rho_0^1(\mu(x_j^0))\omega(\partial_0\Xi(j);Z)-\sum_{k=1}^r\omega(\partial_0\Xi(j);z_1,...,[x_j^0,z_k],...,z_r)\Big{)}.
\end{align*}
On the other hand, for $r=1$, 
\begin{align*}
    \delta_{(1)}\Delta\omega(\Xi;x) & =\rho_1(x)\Delta\omega(\Xi) \\
                                    & =\rho_1(x)\phi\Big{(}\sum_{j=0}^q(-1)^j\omega(\partial_0\Xi(j);x_j^0)\Big{)};
\end{align*}
hence, using the fact that $\rho_1(x)\phi=\rho_0^0(\mu(x))$,
\begin{align*}
    (\delta'\partial-\partial\delta' & -\delta_{(1)}\Delta)\omega(\Xi;x) \\
    & =\sum_{j=0}^q(-1)^j\Big{(}\rho_0^1(\mu(x_j^0))\omega(\partial_0\Xi(j);x)-\rho_0^0(\mu(x))\omega(\partial_0\Xi(j);x_j^0)-\omega(\partial_0\Xi(j);[x_j^0,x])\Big{)} \\
    & =\sum_{j=0}^q(-1)^j\delta_{(1)}\omega(\partial_0\Xi(j);x_j^0,x)=\Delta\delta_{(1)}\omega(\Xi;x).
\end{align*}
Whereas, for $r>1$, 
\begin{align*}
\delta_{(1)}\Delta\omega(\Xi;Z) & =\sum_{k=1}^r(-1)^k\rho_0^1(\mu(z_k))\Delta\omega(\Xi;Z(k))+\sum_{m<n}(-1)^{m+n}\Delta\omega(\Xi;[z_m,z_n],Z(m,n)) \\
                                & =\sum_{k=1}^r(-1)^k\rho_0^1(\mu(z_k))\sum_{j=0}^q(-1)^j\omega(\partial_0\Xi(j);x_j^0,Z(k))+ \\
                                & \qquad\qquad +\sum_{m<n}(-1)^{m+n}\sum_{j=0}^q(-1)^j\omega(\partial_0\Xi(j);x_j^0,[z_m,z_n],Z(m,n));
\end{align*}
hence, by introducing the vectors
\begin{eqnarray*}
Z^j=(x_j^0,z_1,...,z_r)\in\gg^{r+1},
\end{eqnarray*}
and rearranging, we have got
\begin{align*}
(\delta^{(r)} & \partial-\partial\delta^{(r)}-\delta_{(1)}\Delta)\omega(\Xi;Z) \\
    & =\sum_{j=0}^q(-1)^j\Big{(}\rho_0^1(\mu(x_j^0))\omega(\partial_0\Xi(j);Z)+\sum_{k=1}^r(-1)^{k+1}\rho_0^1(\mu(z_k))\omega(\partial_0\Xi(j);x_j^0,Z(k))+ \\
    &\qquad\qquad -\sum_{k=1}^r(-1)^{k-1}\omega(\partial_0\Xi(j);[x_j^0,z_k],Z(k))+\sum_{m<n}(-1)^{m+n+2}\omega(\partial_0\Xi(j);[z_m,z_n],x_j^0,Z(m,n))\Big{)} \\
    & =\sum_{j=0}^q(-1)^j\Big{(}\sum_{k=0}^r(-1)^{k}\rho_0^1(\mu(z^j_k))\omega(\partial_0\Xi(j);Z^j(k))+\sum_{m<n}(-1)^{m+n}\omega(\partial_0\Xi(j);[z^j_m,z^j_n],Z^j(m,n))\Big{)} \\
    & =\sum_{j=0}^q(-1)^j\delta_{(1)}\omega(\partial_0\Xi(j);Z^j)=\Delta\delta_{(1)}\omega(\Xi;Z),
\end{align*}
as desired. 

\end{proof}
We schematize the relation of the previous proposition by the following diagram:
\begin{eqnarray*}
\xymatrix{
\circ \ar@{.}[rr]\ar@{.}[dr] & & C^{p,q+1}_r(\gg_1,\phi) \ar@{.}[rr]\ar[dr]^\partial & & \circ\ar@{.}[dr] & \\
 & C^{p+1,q+1}_{r-1}(\gg_1,\phi) \ar[rr]^{\delta_{(1)}\qquad\quad} & & C^{p+1,q+1}_r(\gg_1,\phi)\ar@{.}[rr] & & \circ \\
\circ \ar@{.}[rr]\ar@{.}[dr]\ar@{.}[uu] & & C^{p,q}_r(\gg_1,\phi)\ar'[r]^{\quad\delta_{(1)}}[rr]\ar[dr]^\partial\ar[ul]^\Delta\ar'[u]_{\delta^{(r)}}[uu] & & C^{p,q}_{r+1}(\gg_1,\phi)\ar@{.}[dr]\ar@{.}[uu]\ar[ul]_\Delta & \\
 & \circ \ar@{.}[rr]\ar@{.}[uu] & & C^{p+1,q}_r(\gg_1,\phi)\ar@{.}[rr]\ar[uu]_(.35){\delta^{(r)}} & & \circ \ar@{.}[uu]
}
\end{eqnarray*}







Let $\omega\in C^{p,q}_r(\gg_1,\phi)$, without caring much about signs for the time being and disregarding $\Delta_k$ for $k>1$, we have got the following diagram roughly representing the relations that are to vanish if $\nabla$ is indeed the differential of a complex:
\begin{eqnarray*}
\xymatrix{
            &                                         & \Delta^2                                    &                                             &  \\
            & [\partial,\Delta]                       & \Delta\omega \ar[l]\ar[u]\ar[r]\ar[d]       & [\delta^{(r)},\Delta]                       &  \\
\partial^2  & \partial\omega \ar[l]\ar[u]\ar[r]\ar[d] & [\partial,\delta^{(r)},\delta_{(1)},\Delta] & \delta^{(r)}\omega \ar[l]\ar[u]\ar[r]\ar[d] & (\delta^{(r)})^2  \\
            & [\partial,\delta_{(1)}]                 & \delta_{(1)}\omega \ar[l]\ar[u]\ar[r]\ar[d] & [\delta^{(r)},\delta_{(1)}]                 &  \\
            &                                         & \delta_{(1)}^2                              &                                             &  
}
\end{eqnarray*}
We have shown that six of these relations already; namely,
\begin{itemize}
    \item $\partial^2\omega=0\in C^{p+2,q}_r(\gg_1,\phi)$, because $\partial$ is the differential of groupoid cochains,
    \item $(\delta^{(r)})^2\omega=0\in C^{p,q+2}_r(\gg_1,\phi)$, because $\delta^{(r)}$ is a Chevalley-Eilenberg differential,
    \item $\delta_{(1)}^2\omega=0\in C^{p,q}_{r+2}(\gg_1,\phi)$, again, because $\delta_{(1)}$ is a Chevalley-Eilenberg differential, \item $(\partial\delta_{(1)}-\delta_{(1)}\partial)\omega=0\in C^{p+1,q}_{r+1}(\gg_1,\phi)$ is the contents of proposition \ref{q-pagDblCx},
    \item $(\delta^{(r)}\delta_{(1)}-\delta^{(r+1)}\delta_{(1)})\omega=0\in C^{p,q+1}_{r+1}(\gg_1,\phi)$ is the contents of proposition \ref{p-pagDblCx} and
    \item $(\delta^{(r)}\partial-\partial\delta^{(r)}-\delta_{(1)}\circ\Delta-\Delta\circ\delta_{(1)})\omega=0\in C^{p+1.q+1}_r(\gg_1,\phi)$ is the contents of proposition \ref{starTop}.
\end{itemize}
We can assert one more:
\begin{prop}\label{partialDelta}
$(\partial\Delta+\Delta\partial)\omega=0\in C^{p+2,q+1}_{r-1}(\gg_1,\phi)$.
\end{prop}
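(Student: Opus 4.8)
The plan is to expand both composites $\partial\Delta$ and $\Delta\partial$ straight from their defining formulae and to reconcile them using the simplicial identities satisfied by the face maps $\partial_k$ of the nerve. Throughout I fix $\Xi=(\xi_0,\dots,\xi_q)\in\gg_{p+2}^{q+1}$, with each $\xi_a$ identified with $(x_a^0,\dots,x_a^{p+1};y_a)$, and $Z\in\gg^{r-1}$, and I evaluate both maps on $(\Xi;Z)$, since the identity to be proved lives in $C^{p+2,q+1}_{r-1}(\gg_1,\phi)$.

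First I would record the two expansions. Writing $(\partial_l\xi_a)^0$ for the leading $\gg$-component of $\partial_l\xi_a$, one gets
\[
\partial\Delta\omega(\Xi;Z)=\sum_{l=0}^{p+2}\sum_{a=0}^{q}(-1)^{l+a}\,\omega\big(\partial_0\partial_l\Xi(a);(\partial_l\xi_a)^0,Z\big),
\]
where in the inner composite $\partial_l$ acts first, and
\[
\Delta\partial\omega(\Xi;Z)=\sum_{a=0}^{q}\sum_{l=0}^{p+1}(-1)^{a+l}\,\omega\big(\partial_l\partial_0\Xi(a);x_a^0,Z\big).
\]
The key step is the simplicial identity $\partial_l\partial_0=\partial_0\partial_{l+1}$ (from $\partial_i\partial_j=\partial_{j-1}\partial_i$ with $i=0$, $j=l+1$), which I use to rewrite the inner face in the second expression and then reindex by $m=l+1$. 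This turns the second sum into $-\sum_{a}\sum_{m=1}^{p+2}(-1)^{a+m}\,\omega(\partial_0\partial_m\Xi(a);x_a^0,Z)$; the overall minus coming from $(-1)^{m-1}$ is exactly what is needed next.

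Now comes the matching. From the explicit face formulae of Proposition \ref{AlgUpToHomotopy r=0} one reads off $(\partial_l\xi_a)^0=x_a^0$ for every $l\ge 2$, whereas $(\partial_0\xi_a)^0=x_a^1$ and $(\partial_1\xi_a)^0=x_a^0+x_a^1$. Hence for all $l\ge 2$ the terms of $\partial\Delta\omega$ and $\Delta\partial\omega$ carry identical arguments and opposite signs, so they cancel in pairs. What remains is the $l=0,1$ part of $\partial\Delta\omega$ together with the $m=1$ part of $\Delta\partial\omega$; invoking the second simplicial identity $\partial_0\partial_0=\partial_0\partial_1$, all three remaining families share the same inner face $D:=\partial_0^2\colon\gg_{p+2}\to\gg_p$, and the leftover is
\[
\sum_{a=0}^q(-1)^a\Big(\omega(D\Xi(a);x_a^1,Z)-\omega(D\Xi(a);x_a^0+x_a^1,Z)+\omega(D\Xi(a);x_a^0,Z)\Big).
\]
Expanding the middle summand by multilinearity of $\omega$ in its $\gg$-slot collapses this to zero, which is the assertion.

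The \emph{main obstacle} is purely bookkeeping: keeping the signs correct through the reindexing $m=l+1$ (it is precisely the resulting global sign that flips addition into cancellation for $l\ge 2$), and tracking that the degenerate low faces $\partial_0,\partial_1$ treat the leading component $x_a^0$ that $\Delta$ singles out differently — one exposes $x_a^1$, the other merges into $x_a^0+x_a^1$ — since it is exactly this discrepancy, reconciled by linearity, that forces the surviving terms to vanish rather than merely recombine. The conceptual input is light, resting only on the two identities $\partial_l\partial_0=\partial_0\partial_{l+1}$ and $\partial_0\partial_0=\partial_0\partial_1$ together with multilinearity. Finally, in the boundary case $r=1$ the difference map is post-composed with $\phi$ to land in the $V$-stratum; since $\phi$ is linear the entire cancellation above runs verbatim inside $\phi(\,\cdot\,)$, so no separate argument is needed.
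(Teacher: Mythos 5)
Your proposal is correct and follows essentially the same route as the paper: expand both composites, use the simplicial identities $\partial_l\partial_0=\partial_0\partial_{l+1}$ and $\partial_0\partial_0=\partial_0\partial_1$ (which the paper records via explicit coordinate formulae for $\partial_k\partial_0(\xi_j)$ and $\partial_0\partial_k(\xi_j)$) to cancel all faces with $l\geq 2$, and then collapse the surviving low-face terms $x_a^1$, $x_a^0+x_a^1$, $x_a^0$ by linearity of $\omega$ in its $\gg$-slot. Your explicit treatment of the $r=1$ case with the post-composition by $\phi$ is a small point the paper leaves implicit, but the argument is the same.
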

\begin{proof}
Let $Z\in\gg^{r+1}$, $\Xi=(\xi_0,...,\xi^q)\in\gg_{p+2}^{q+1}$ and let $\xi_j$ be identified with $(x_j^0,...,x_j^{p+1};y_j)$. Computing,
\begin{align*}
(\partial\Delta+\Delta\partial)\omega(\Xi;Z) & =\sum_{j=0}^q(-1)^j\partial\omega(\partial_0\Xi(j);x_j^0,Z)+\sum_{k=0}^{p+2}(-1)^k\Delta\omega(\partial_k\Xi;Z) \\
             & =\sum_{j=0}^q(-1)^j\sum_{k=0}^{p+1}(-1)^k\omega(\partial_k(\partial_0\Xi(j));x_j^0,Z)+\sum_{j=0}^q(-1)^j\omega(\partial_0(\partial_0\Xi)(j);x_j^1,Z) \\
             & \quad -\sum_{j=0}^q(-1)^j\omega(\partial_0(\partial_1\Xi)(j);x_j^0+x_j^1,Z)+\sum_{k=2}^{p+2}(-1)^k\sum_{j=0}^q(-1)^j\omega(\partial_0(\partial_k\Xi)(j);x_j^0,Z) \\
             & =\sum_{j=0}^q(-1)^j\Big{(}\sum_{k=0}^{p+1}(-1)^k\omega(\partial_k\partial_0\Xi(j);x_j^0,Z)-\omega(\partial_0(\partial_0\Xi)(j);x_j^0,Z) \\
             & \qquad\qquad +\sum_{k=2}^{p+2}(-1)^k\omega(\partial_0(\partial_k\Xi)(j);x_j^0,Z)\Big{)}.
\end{align*}
Now, either computing, or from the simplicial identities, we know that
\begin{eqnarray*}
\partial_k\partial_0(\xi_j)\sim
  \begin{cases}
    (x_j^2,...,x_j^{p+1};y_j)                       & \quad \text{if } k=0  \\
    (x_j^1,...,x_j^{k}+x_j^{k+1},...,x_j^{p+1};y_j) & \quad \text{if } 0<k\leq p\\
    (x_j^1,...,x_j^{p};y_j+\mu(x_j^{p+1}))          & \quad \text{if } k=p+1, 
  \end{cases}
\end{eqnarray*}
and
\begin{eqnarray*}
\partial_0\partial_k(\xi_j)\sim
  \begin{cases}
    (x_j^2,...,x_j^{p+1};y_j)                     & \quad \text{if } k\in\lbrace 0,1\rbrace  \\
    (x_j^1,...,x_j^k+x_j^{k+1},...,x_j^{p+1};y_j) & \quad \text{if } 1<k\leq p+1\\
    (x_j^1,...,x_j^{p};y_j+\mu(x_j^{p+1}))        & \quad \text{if } k=p+2; 
  \end{cases}
\end{eqnarray*}
therefore, the sums cancel one another and the result follows.

\end{proof}
After inspecting the degrees, one sees that the remaining two relations need to take $\Delta_2$ into account. Successively adding one difference map at a time one arrives at the following rephrasing of theorem \ref{The2AlgCx}.
\begin{theorem}\label{algDiffs}
The higher difference maps $\Delta_k$ verify the following set of equations for elements in $C^{p,q}_r(\gg_1,\phi)$
\begin{itemize}
    \item[i)] $\delta^{(r-k)}\Delta_k+\Delta_k\delta^{(r)}=\delta_{(1)}\Delta_{k+1}+\Delta_{k+1}\delta_{(1)}$ for all $1\leq k<r$,
    \item[ii)]  $\delta\Delta_r+\Delta_r\delta^{(r)}=\Delta_{r+1}\delta_{(1)}$ and
    \item[iii)] assuming the convention that $\Delta_0=\partial$,
    \begin{eqnarray*}
    \sum_{i=0}^k\Delta_{k-i}\Delta_i=0
    \end{eqnarray*}
    for all $1\leq k\leq r$.
\end{itemize}
\end{theorem}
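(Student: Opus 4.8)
The plan is to read Theorem \ref{algDiffs} as the exact book-keeping of the identity $\nabla^2=0$ of Theorem \ref{The2AlgCx}. Expanding $\nabla\circ\nabla$ on a cochain $\omega\in C^{p,q}_r(\gg_1,\phi)$ and sorting the resulting compositions by the multidegree of their target, one obtains one identity per admissible shift $(\Delta p,\Delta q,\Delta r)$ with $\Delta p+\Delta q+\Delta r=2$; since $\delta^{(r)}$ and $\delta_{(1)}$ never lower $r$ while only the difference maps do (at the cost of raising $p$), this list is finite and exhaustive. Seven of these identities are already settled: $\partial^2=0$, $(\delta^{(r)})^2=0$, $\delta_{(1)}^2=0$, and Propositions \ref{q-pagDblCx}, \ref{p-pagDblCx}, \ref{starTop}, \ref{partialDelta}. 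The ones that remain are precisely the families landing in $C^{p+1,q+1+k}_{r-k}$ for $1\le k<r$ (family i), in the boundary floor $C^{p+1,q+1+r}_0$ when $k=r$ (family ii), and in $C^{p+2,q+k}_{r-k}$ (family iii). Thus the whole content of the theorem is to verify these three families; $\nabla^2=0$ then follows by reassembling the pieces with the signs prescribed by $\nabla$.

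I would treat each family as a direct computation in the style of the preceding propositions, starting with (iii) because it is purely simplicial: with the convention $\Delta_0=\partial$ it asserts that the ``horizontal-and-difference'' part of $\nabla$ squares to zero. Expanding $\Delta_{k-i}\Delta_i\omega(\Xi;Z)$ from the defining formula produces a double alternating sum indexed by a choice of $i$ source components prepended to the $\gg$-slot followed by a further choice of $k-i$ such components, together with the nested face maps. Using the simplicial identities $\partial_a\partial_b=\partial_{b-1}\partial_a$ exactly as in Proposition \ref{partialDelta} (which is the case $k=1$), the contributions for consecutive values of $i$ pair off and cancel; the cleanest route is an induction on $k$ reducing to that base case. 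No representation-theoretic input enters here.

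For (i) and (ii) I would mimic the proof of Proposition \ref{starTop}. Writing $\delta^{(r-k)}\Delta_k+\Delta_k\delta^{(r)}$ on $(\Xi;Z)$, the two Chevalley--Eilenberg expansions contribute a $\rho^{(r)}$-action term and a Lie-bracket term; the bracket terms agree termwise with those coming from $\Delta_{k+1}\delta_{(1)}+\delta_{(1)}\Delta_{k+1}$, while the action terms are reorganized by introducing the augmented vectors $Z^{j}=(x^0_{a_1},\dots,x^0_{a_k},Z)$ that prepend the chosen source components, exactly as the vector $Z^j$ is used in Proposition \ref{starTop}. The single genuine ingredient is the representation identity $\rho_0^1(\mu(x))=\rho_1(x)\phi$ together with the explicit form of $\rho^{(r)}$, which is what turns the leftover $\rho^{(r)}$-term into a $\delta_{(1)}$. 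Family (ii) is the same computation specialised to $r-k=0$: the target now sits on the $V$-valued floor, $\delta$ is the Chevalley--Eilenberg differential twisted by $\rho_0^0$, the difference map into $r=0$ carries the extra $\phi$ stipulated in the definition of $\Delta_k$ for $r=k$, so that $\rho_0^0(\mu(x))=\phi\rho_1(x)$ replaces $\rho_0^1(\mu(x))=\rho_1(x)\phi$, and the term $\delta_{(1)}\Delta_{r+1}$ is simply absent because $\Delta_{r+1}$ is undefined on cochains of $r$-degree $r$.

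The hard part will be the combinatorics of (iii): tracking the signs $(-1)^{a_1+\cdots+a_k}$ through the reindexing forced by the face maps when the two difference maps are composed, and confirming that the terms genuinely cancel rather than merely permute. The representation-theoretic steps in (i) and (ii) are, by contrast, routine once the $Z^j$-rearrangement is in place; the only delicate point there is matching the global signs so that the four summands assemble into the symmetric form displayed in the theorem, which is the same sign-juggling already encountered in Propositions \ref{starTop} and \ref{AlgUpToHomotopy r=0}.
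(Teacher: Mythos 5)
Your proposal matches the paper's own strategy: the theorem is presented there precisely as the residual book-keeping of $\nabla^2=0$ once the six listed relations and Proposition \ref{partialDelta} are in place, and the remaining families are handled by direct computations modelled on Lemma \ref{firstHigher} and the proposition following it, using the prepended vectors $Z^j$ and the identities $\rho_0^1(\mu(x))=\rho_1(x)\phi$, $\rho_0^0(\mu(x))=\phi\rho_1(x)$ exactly as you describe. One small caveat: the paper only writes out the $k=1$ and $k=2$ cases (``some of these equations''), and your suggested induction on $k$ for part (iii) is not obviously available since the identities for distinct $k$ are independent rather than nested, but your primary expand-and-cancel argument via the simplicial identities is the route the paper actually takes.
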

We lay down the computations that show some of these equations in the sequel. 

\subsubsection*{Higher difference maps}
We start by studying how the relation labeled $[\delta^{(r)},\Delta]$ behaves in the special case $r=1$. \\
Let $\alpha\in\bigwedge^q\gg_p^*\otimes\gg^*\otimes W$, $\Xi=(\xi_0,...,\xi_{q+1})\in\gg_{p+1}^{q+2}$ with the by now usual identification $\xi_j\sim(x_j^0,...,x_j^p;y_j)$, and let us compute separately $\delta\Delta\alpha$ and $\Delta\delta'\alpha$. On the one hand, we have got
\begin{align*}
\delta\Delta\alpha(\Xi) & =\sum_{k=0}^{q+1}(-1)^k\rho_0^0(\hat{t}_{p+1}(\xi_k))\Delta\alpha(\Xi(k))+\sum_{m<n}(-1)^{m+n}\Delta\alpha([\xi_m,\xi_n],\Xi(m,n)) .
\end{align*}
The first term in this expression is computed to be
\begin{align*}
 & \sum_{k=0}^{q+1}(-1)^k\rho_0^0(\hat{t}_{p+1}(\xi_k))\phi\Big{[}\sum_{j=0}^{k-1}(-1)^j\alpha(\partial_0\Xi(j,k);x_j^0)+\sum_{j=k+1}^{q+1}(-1)^{j+1}\alpha(\partial_0\Xi(k,j);x_j^0)\Big{]}.
\end{align*}
The second term is trickier though. Carefully computing it, one realizes it coincides with
\begin{align*}
 & \sum_{m<n}(-1)^{m+n}\phi\Big{(}\alpha(\partial_0\Xi(m,n);[x_m^0,x_n^0]+\Lie_{\hat{t}_p(\partial_0\xi_m)}x_n^0-\Lie_{\hat{t}_p(\partial_0\xi_n)}x_m^0)+ \\
 & \qquad +\sum_{j=0}^{m-1}(-1)^{j+1}\alpha(\partial_0([\xi_m,\xi_n]),\partial_0\Xi(j,m,n);x_j^0)+\sum_{j=m+1}^{n-1}(-1)^j\alpha(\partial_0([\xi_m,\xi_n]),\partial_0\Xi(m,j,n);x_j^0)+ \\
 & \qquad\qquad +\sum_{j=n+1}^q(-1)^{j+1}\alpha(\partial_0([\xi_m,\xi_n]),\partial_0\Xi(m,n,j);x_j^0)\Big{)} 
\end{align*}
The shape of the first term comes from the definition of the Lie algebra structure on $\gg_{p+1}$. Recall that the bracket is inherited from the product $\gg^{p+1}$; therefore, the first entry of $[\xi_m,\xi_n]$ is
\begin{align*}
\Big{[}\big{(}x_m^0,y_m & +\sum_{k=1}^p\mu(x_m^k)\big{)},\big{(}x_n^0,y_n+\sum_{k=1}^p\mu(x_n^k)\big{)}\Big{]}_1 \\
 & =\Big{(}[x_m^0,x_n^0]+\Lie_{y_m+\sum_{k=1}^p\mu(x_m^k)}x_n^0-\Lie_{y_n+\sum_{k=1}^p\mu(x_n^k)}x_m^0,\big{[}y_m+\sum_{k=1}^p\mu(x_m^k),y_n+\sum_{k=1}^p\mu(x_n^k)\big{]}\Big{)},
\end{align*}
and we already saw that $\hat{t}_p(\partial_0\xi_j)=y_j+\sum_{k=1}^p\mu(x_j^k)$. The second term is written in such a way that it has got the appropriate signs when written with our conventions. On the other hand, 
\begin{align*}
\Delta\delta'\alpha(\Xi) & =\phi\Big{(}\sum_{j=0}^q(-1)^j\delta'\alpha(\partial_0\Xi(j);x_j^0)\Big{)}.
\end{align*}
 After evaluating the differential $\delta'$, there will be two terms. The first will be
\begin{align*}         
\sum_{j=0}^{q+1}(-1)^j &  \phi\Big{(}\sum_{k=0}^{j-1}(-1)^k\rho'(\hat{t}_p(\partial_0\xi_k))\alpha(\partial_0\Xi(k,j);x_j^0)+\sum_{k=j+1}^q(-1)^{k+1}\rho'(\hat{t}_p(\partial_0\xi_k))\alpha(\partial_0\Xi(j,k);x_j^0)\Big{)}
\end{align*}
while the second,
\begin{align*}
\sum_{j=0}^{q+1}(-1)^j & \phi\Big{(}\sum_{m=0}^{j-1}\Big{[}\sum_{n=m+1}^{j-1}(-1)^{n}\alpha([\partial_0\xi_m,\partial_0\xi_n]),\partial_0\Xi(m,n,j);x_j^0)+ \\
 & \qquad +\sum_{n=j+1}^{q+1}(-1)^{n+1}\alpha([\partial_0\xi_m,\partial_0(\xi_n]),\partial_0\Xi(m,j,n);x_j^0)\Big{]}+ \\
 & \qquad\qquad +\sum_{m=j+1}^{q+1}\sum_{n=m+1}^{q+1}(-1)^{n}\alpha([\partial_0\xi_m,\partial_0\xi_n],\partial_0\Xi(m,n,j);x_j^0)\Big{)}. 
\end{align*}
When considering the sum of the whole of the two expressions, this second term will cancel out with the second part of the second term above simply due to the fact that $\partial_0$ is a Lie algebra homomorphism. Furthermore, since $\rho_0^0(y)\phi=\phi\rho_0^1(y)$ for all $y\in\hh$, we will have again the difference 
\begin{eqnarray*}
\rho_0^1(\hat{t}_{p}(\xi_k))-\rho_0^1(\hat{t}_p(\partial_0\xi_k))=\rho_0^1(\mu(x_k^0)),
\end{eqnarray*} 
and in the meantime, the second terms of the representation $\rho'$,
\begin{eqnarray*}
\alpha(\partial_0\Xi(m,n);\Lie_{\hat{t}_p(\partial_0\xi_m)}x_n^0)
\end{eqnarray*}
will cancel the ones coming from the Lie bracket in $\gg_{p+1}$. Thus, we are left behind with a series of terms of the form
\begin{eqnarray*}
\rho_0^1(\mu(x_k))\alpha(\partial_0\Xi(j,k);x_j^0)
\end{eqnarray*}
and another series of terms of the form
\begin{eqnarray*}
\alpha(\partial_0\Xi(m,n);[x_m^0,x_n^0]).
\end{eqnarray*}
What is remarkable is that these come with the right signs to build up differentials in the image of $\delta_{(1)}$. In fact, the preceding discussion can be summarized in the following lemma.
\begin{lemma}\label{firstHigher}
Let 
\begin{eqnarray*}
\xymatrix{
\Delta_2:\bigwedge^q\gg_p^*\otimes\bigwedge^2\gg^*\otimes W \ar[r] & \bigwedge^{q+2}\gg_{p+1}^*\otimes V
}
\end{eqnarray*}
be defined by
\begin{eqnarray*}
\Delta_2\alpha(\Xi):=\phi\Big{(}\sum_{m<n}(-1)^{m+n}\alpha(\partial_0\Xi(m,n);x_m^0,x_n^0)\Big{)}.
\end{eqnarray*}
Then
\begin{eqnarray*}
\delta\Delta+\Delta\delta'=\Delta_2\circ\delta_{(1)}.
\end{eqnarray*}
\end{lemma}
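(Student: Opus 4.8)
The plan is to obtain the identity by assembling the surviving terms from the two explicit expansions carried out in the discussion preceding the lemma and recognizing them as $\Delta_2\circ\delta_{(1)}$. Throughout I fix $\alpha\in\bigwedge^q\gg_p^*\otimes\gg^*\otimes W$ (the case $r=1$) and $\Xi=(\xi_0,\dots,\xi_{q+1})\in\gg_{p+1}^{q+2}$, with the identification $\xi_j\sim(x_j^0,\dots,x_j^p;y_j)$.

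First I would expand $\delta\Delta\alpha(\Xi)$ using the $V$-valued Chevalley--Eilenberg formula for $\delta$, splitting it into the representation part $\sum_k(-1)^k\rho_0^0(\hat{t}_{p+1}(\xi_k))\Delta\alpha(\Xi(k))$ and the bracket part $\sum_{m<n}(-1)^{m+n}\Delta\alpha([\xi_m,\xi_n],\Xi(m,n))$, substituting the definition of $\Delta$ into each. In parallel I would expand $\Delta\delta'\alpha(\Xi)=\phi\Big{(}\sum_j(-1)^j\delta'\alpha(\partial_0\Xi(j);x_j^0)\Big{)}$ by unfolding the $\delta'=\delta^{(1)}$ differential, which again separates into a $\rho^{(1)}$-part and a bracket part.

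The heart of the argument is the cancellation pattern already sketched above. Since $\partial_0$ is a Lie algebra homomorphism, the contributions built from $\partial_0[\xi_m,\xi_n]$ match term by term in the two expansions. The derivation part of $\rho^{(1)}$, namely the terms $\alpha(\partial_0\Xi(m,n);\dots,\Lie_{\hat{t}_p(\partial_0\xi_m)}x_n^0,\dots)$, cancels against the $\Lie$-contributions produced by the bracket $[\xi_m,\xi_n]$ in $\gg_{p+1}$, whose first component equals $[x_m^0,x_n^0]+\Lie_{\hat{t}_p(\partial_0\xi_m)}x_n^0-\Lie_{\hat{t}_p(\partial_0\xi_n)}x_m^0$. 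Finally, invoking the intertwining $\rho_0^0(y)\phi=\phi\rho_0^1(y)$ together with $\hat{t}_{p+1}(\xi_k)-\hat{t}_p(\partial_0\xi_k)=\mu(x_k^0)$, the two representation parts fuse and leave exactly the factor $\rho_0^1(\mu(x_k^0))$. What survives is a sum of terms of the form $\phi\Big{(}\rho_0^1(\mu(x_k^0))\alpha(\partial_0\Xi(j,k);x_j^0)\Big{)}$ and $\phi\Big{(}\alpha(\partial_0\Xi(m,n);[x_m^0,x_n^0])\Big{)}$.

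The last step is to identify this surviving sum with $\Delta_2\circ\delta_{(1)}$. Unfolding $\Delta_2\delta_{(1)}\alpha(\Xi)=\phi\Big{(}\sum_{m<n}(-1)^{m+n}\delta_{(1)}\alpha(\partial_0\Xi(m,n);x_m^0,x_n^0)\Big{)}$ and substituting $\delta_{(1)}\alpha(\Xi;x_0,x_1)=\rho_0^1(\mu(x_0))\alpha(\Xi;x_1)-\rho_0^1(\mu(x_1))\alpha(\Xi;x_0)-\alpha(\Xi;[x_0,x_1])$, the $[x_m^0,x_n^0]$-terms match on the nose, while the two $\rho_0^1(\mu(\cdot))$-terms must be reconciled with the $\phi\Big{(}\rho_0^1(\mu(x_k^0))\alpha(\partial_0\Xi(j,k);x_j^0)\Big{)}$ survivors. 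I expect the main obstacle to be precisely this sign bookkeeping: for each unordered pair the contribution indexed by $j<k$ and the one indexed by $j>k$ come with different explicit signs, and one must verify that after the antisymmetrization implicit in $\sum_{m<n}$, and after reconciling the indexing of $\partial_0\Xi(j,k)$ with that of $\partial_0\Xi(m,n)$, the signs agree with the antisymmetric combination $\rho_0^1(\mu(x_m^0))\alpha(\cdots;x_n^0)-\rho_0^1(\mu(x_n^0))\alpha(\cdots;x_m^0)$. This is the usual Koszul-sign reconciliation from the Chevalley--Eilenberg setting, and no idea beyond careful accounting is required.
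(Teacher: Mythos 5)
Your proposal follows the same route as the paper: the paper's proof of this lemma is precisely the computation preceding its statement, namely expanding $\delta\Delta\alpha$ and $\Delta\delta'\alpha$, cancelling the bracket contributions because $\partial_0$ is a Lie algebra homomorphism, cancelling the derivation part of $\rho^{(1)}$ against the $\Lie$-terms of the bracket on $\gg_{p+1}$, extracting $\rho_0^1(\mu(x_k^0))$ from the intertwining relation $\rho_0^0(y)\phi=\phi\rho_0^1(y)$ together with $\hat{t}_{p+1}(\xi_k)-\hat{t}_p(\partial_0\xi_k)=\mu(x_k^0)$, and recognizing the survivors as $\Delta_2\delta_{(1)}$. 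You leave the final sign verification at essentially the same level of detail as the paper does, so the argument is correct and matches the intended one.
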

The second difference maps will help us further decide the behaviour of the missing relations, as shown by the following proposition.
\begin{prop}
For $r>2$, 
\begin{eqnarray*}
\delta^{(r-1)}\circ\Delta+\Delta\circ\delta^{(r)}=\delta_{(1)}\circ\Delta_2-\Delta_2\circ\delta_{(1)}
\end{eqnarray*}
and
\begin{eqnarray*}
\Delta^2=-(\Delta_2\circ\partial+\partial\circ\Delta_2).
\end{eqnarray*}
\end{prop}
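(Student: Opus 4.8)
The plan is to establish both identities by the method used throughout this section: expand every term on a generic cochain $\omega\in C^{p,q}_r(\gg_1,\phi)$ and compare coefficients, evaluating on $\Xi\in\gg_{p+1}^{q+2}$, $Z\in\gg^{r-1}$ for the first identity and on $\Xi\in\gg_{p+2}^{q+2}$, $Z\in\gg^{r-2}$ for the second. I identify $\xi_j\sim(x_j^0,\dots,x_j^p;y_j)$ and use repeatedly the relation $\hat t_p(\partial_0\xi_j)=\hat t_{p+1}(\xi_j)-\mu(x_j^0)$ together with the $2$-representation identities $\rho_0^0(\mu(x))=\phi\rho_1(x)$ and $\rho_0^1(\mu(x))=\rho_1(x)\phi$. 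Structurally, the first identity is the ``one level higher'' analogue of Proposition~\ref{starTop} (with $\Delta_1$ in the role of $\partial$ and $\Delta_2$ in the role of $\Delta$), while the second is the lift of Proposition~\ref{partialDelta} to the case $k=2$ of the simplicial relation of Theorem~\ref{algDiffs}, under the convention $\Delta_0=\partial$.

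For the first identity I would expand $\delta^{(r-1)}\Delta\omega$ and $\Delta\delta^{(r)}\omega$ exactly as in Proposition~\ref{starTop}. Since $\delta^{(r-1)}$ carries the representation $\rho^{(r-1)}(\hat t_{p+1}(\cdot))$ while the inner $\delta^{(r)}$ carries $\rho^{(r)}(\hat t_p(\partial_0\cdot))$, the relation $\hat t_p(\partial_0\xi_j)=\hat t_{p+1}(\xi_j)-\mu(x_j^0)$ collapses the difference of their leading parts into a single $\rho_0^1(\mu(x_j^0))$-term, which $\rho_0^1(\mu(x_j^0))=\rho_1(x_j^0)\phi$ then converts into a $\delta_{(1)}$-type contribution. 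Simultaneously, the $\Lie$-terms through which $\rho^{(r)}$ acts on the $Z$-slots cancel against the $\Lie$-terms produced by the bracket of $\gg_{p+1}$, precisely the cancellation displayed in the computations preceding Lemma~\ref{firstHigher}. What remains is a sum of terms in which the extracted first component $x_j^0$ has been inserted into a fresh $\gg$-slot with the signs of $\delta_{(1)}$ applied either after or before $\Delta_2$; recognising these as $\delta_{(1)}\Delta_2$ and $-\Delta_2\delta_{(1)}$, with the relative sign fixed by the parity of the insertion position, gives the claim. The hypothesis $r>2$ guarantees that $\Delta_2$ lands in a $W$-valued slot, so no composition with $\phi$ intervenes.

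For the second identity I would compute $\Delta^2\omega=\Delta_1\Delta_1\omega$ directly from the defining formula. After the first $\Delta_1$ the arrows have already been truncated by one $\partial_0$, so the second $\Delta_1$ extracts the \emph{shifted} first component $x_c^1$ rather than an original $x_c^0$; hence $\Delta^2\omega$ is a signed sum of terms of the form $\omega(\partial_0\partial_0\Xi(\cdot);x_c^1,x_a^0,Z)$. I would then expand $\Delta_2\partial\omega$ through $(\partial\omega)(\Psi;W)=\sum_k(-1)^k\omega(\partial_k\Psi;W)$ and $\partial\Delta_2\omega$ through the faces $\partial_k\Xi$, using $\partial_0\xi_j\sim(x_j^1,\dots)$, $\partial_1\xi_j\sim(x_j^0+x_j^1,\dots)$ and $\partial_k\xi_j\sim(x_j^0,\dots)$ for $k\geq 2$. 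The decisive point is that $\partial_1$ is the only face mixing the first two components, so its contribution $x_j^0+x_j^1$ splits by bilinearity precisely into the unshifted pieces feeding $\Delta_2\partial$ and the shifted pieces feeding $\Delta^2$; combined with the simplicial identities relating $\partial_k\partial_0$ and $\partial_0\partial_k$ (used exactly as in Proposition~\ref{partialDelta}), every term cancels and one obtains $\Delta^2+\Delta_2\partial+\partial\Delta_2=0$.

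The main obstacle in both parts is the sign and index bookkeeping forced by the nested extractions, not any conceptual difficulty, since all the algebraic inputs are already in hand. Concretely, in the first identity one must verify that the $\Lie$-terms of $\rho^{(r)}$ on the $Z$-entries cancel against the $\gg_{p+1}$-bracket terms with no residue, so that the survivor is \emph{exactly} a $\delta_{(1)}$-coboundary of $\Delta_2$ carrying the stated sign; in the second, one must check that the single mixing face $\partial_1$ accounts exactly for the discrepancy between the shifted extraction in $\Delta^2$ and the unshifted extractions in $\Delta_2\partial$ and $\partial\Delta_2$. Both are disciplined but routine verifications of the kind already carried out in Propositions~\ref{starTop} and~\ref{partialDelta}.
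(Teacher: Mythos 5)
Your proposal follows essentially the same route as the paper's proof: direct expansion of both sides on $\Xi$ and $Z$, collapsing $\rho^{(r-1)}(\hat t_{p+1}(\xi_k))-\rho^{(r)}(\hat t_p(\partial_0\xi_k))$ via $\hat t_p(\partial_0\xi_k)=\hat t_{p+1}(\xi_k)-\mu(x_k^0)$, cancelling the $\Lie$-terms against the $\gg_{p+1}$-bracket as in the discussion preceding Lemma \ref{firstHigher}, and for the second identity using the simplicial identities together with the splitting of the $\partial_1$-face $x_j^0+x_j^1$ exactly as in the paper. The only cosmetic remark is that for $r>2$ the surviving terms $\rho_0^1(\mu(x_j^0))$ already have the form of the representation $\rho_{(1)}$ defining $\delta_{(1)}$ in positive $r$-degree, so the conversion through $\rho_0^1(\mu(x))=\rho_1(x)\phi$ you mention is not actually needed there.
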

\begin{proof}
Let $Z=(z_1,...,z_{r-1})\in\gg^{r-1}$, $\Xi=(\xi_0,...,\xi_{q+1})\in\gg_{p+1}^{q+2}$ and identify $\xi_j$ with $(x_j^0,...,x_j^p;y_j)$. We write the expressions for the terms of the left hand side of the first equation
\begin{align*}
\delta^{(r-1)}\Delta\omega(\Xi;Z) & =\sum_{k=0}^{q+1}(-1)^k\rho^{(r-1)}(\hat{t}_{p+1}(\xi_k))\Delta(\Xi(k);Z)+\sum_{m<n}(-1)^{m+n}\Delta([\xi_m,\xi_n],\Xi(k);Z), \\
\Delta\delta^{(r)}\omega(\Xi;Z)   & =\sum_{j=0}^{q+1}(-1)^j\delta^{(r)}\omega(\partial_0\Xi(j);x_j^0,Z).
\end{align*}
For each pair $(j,k)$, if $j>k$, there are terms 
\begin{eqnarray*}
(-1)^k\rho^{(r-1)}(\hat{t}_{p+1}(\xi_k))\big{[}(-1)^{j+1}\omega(\partial_0\Xi(k,j);x_j^0,Z)\big{]} & \textnormal{and} &
(-1)^{j}(-1)^k\rho^{(r)}(\hat{t}_p(\partial_0\xi_k))\omega(\partial_0\Xi(k,j);x_j^0,Z)
\end{eqnarray*}
in each of the equations above respectively. If, on the other hand, $j<k$, one has got the following terms: 
\begin{eqnarray*}
(-1)^k\rho^{(r-1)}(\hat{t}_{p+1}(\xi_k))\big{[}(-1)^{j}\omega(\partial_0\Xi(j,k);x_j^0,Z)\big{]} & \textnormal{and} &
(-1)^{j}(-1)^{k+1}\rho^{(r)}(\hat{t}_p(\partial_0\xi_k))\omega(\partial_0\Xi(j,k);x_j^0,Z).
\end{eqnarray*}
We write the sums of these pairs of terms, 
\begin{align*}
\sum_{\tau\in S_2} & (-1)^{(j+k)}  \abs{\tau}\big{[}\rho^{(r-1)}(\hat{t}_{p+1}(\xi_k))-\rho^{(r)}(\hat{t}_p(\partial_0\xi_k))\big{]}\omega(\partial_0\Xi(\tau(k,j));x_j^0,Z) \\
             & =\sum_{\tau\in S_2}(-1)^{(j+k+1)}\abs{\tau}\big{[}\omega(\partial_0\Xi(\tau(k,j));\Lie_{\hat{t}_p(\partial_0\xi_k)}x_j^0,Z)+ \\
             & \qquad +\rho_0^1(\mu(x_k^0))\omega(\partial_0\Xi(\tau(k,j));x_j^0,Z)-\sum_{i=1}^{r-1}\omega(\partial_0\Xi(\tau(k,j));x_j^0,z_1,...,[x_k^0,x_i],...,x_{r-1})\big{]}.
\end{align*}
The second sum $\delta^{(r-1)}\Delta\omega(\Xi;Z)$, when expanding $\Delta$ will have first term 
\begin{eqnarray*}
\sum_{m<n}(-1)^{m+n}\omega(\Xi(m,n);[x_m^0,x_n^0]+\Lie_{\hat{t}_p(\partial_0\xi_m)}x_n^0-\Lie_{\hat{t}_p(\partial_0\xi_n)}x_m^0,Z).
\end{eqnarray*}
All the remaining terms will cancel out in a similar fashion to the discussion preceding the statement of lemma \ref{firstHigher}. In so, 
\begin{align*}
(\delta^{(r-1)}\Delta & +\Delta\delta^{(r)})\omega(\Xi; Z) \\
  & =-\sum_{m<n}(-1)^{m+n}\Big{[}\rho_0^1(\mu(x_m^0))\omega(\partial_0\Xi(m,n);x_n^0,Z)-\rho_0^1(\mu(x_m^0))\omega(\partial_0\Xi(m,n);x_n^0,Z)+ \\
  & \qquad -\omega(\partial_0\Xi(m,n);[x_m^0,x_n^0],Z)+ \\
  & \qquad\quad+\sum_{k=1}^{r-1}(-1)^{k+1}\big{(}\omega(\partial_0\Xi(m,n);[x_m^0,z_k],x_n^0,Z(k))-\omega(\partial_0\Xi(m,n);[x_n^0,z_k],x_n^0,Z(k))\big{)}\Big{]}
\end{align*}
Now, 
\begin{align*}
\delta_{(1)}\Delta_2\omega(\Xi;Z) & =\sum_{k=1}^{r-1}(-1)^{k+1}\rho_0^1(\mu(z_k))\Delta_2\omega(\Xi;Z(k))+\sum_{a<b}(-1)^{a+b}\Delta_2\omega(\Xi;[z_a,z_b],Z(a,b)) \\
                                  & =\sum_{k=1}^{r-1}(-1)^{k+1}\sum_{m<n}(-1)^{m+n}\rho_0^1(\mu(z_k))\omega(\partial_0\Xi(m,n);x_m^0,x_n^0,Z(k))+ \\
                                  & \qquad\quad +\sum_{a<b}(-1)^{a+b}\sum_{m<n}(-1)^{m+n}\omega(\partial_0\Xi(m,n);x_m^0,x_n^0,[z_a,z_b],Z(a,b)) ;
\end{align*}
thus, one sees that 
\begin{eqnarray*}
(\delta^{(r-1)}\Delta+\Delta\delta^{(r)}-\delta_{(1)}\Delta_2)\omega(\Xi; Z)=-\Delta_2\delta_{(1)}\omega(\Xi; Z),
\end{eqnarray*}
and the first equation of the statement holds. \\
As for the second equation, let $\Xi=(\xi_0,...,\xi_{q+1})\in\gg_{p+2}^{q+2}$ with $\xi_j\sim(x_j^0,...,x_j^{p+1};y_j)$ and $Z$ as before. Computing,
\begin{align*}
\Delta_2\partial\omega(\Xi;Z) & =\sum_{m<n}(-1)^{m+n}\partial\omega(\partial_0\Xi(m,n);x_m^0,x_n^0,Z) \\
                              & =\sum_{m<n}(-1)^{m+n}\sum_{k=0}^{p+1}(-1)^k\omega(\partial_k\partial_0\Xi(m,n);x_m^0,x_n^0,Z);
\end{align*}
whereas, on the other hand, 
\begin{align*}
\partial\Delta_2\omega(\Xi;Z) & =\sum_{k=0}^{p+2}(-1)^k\Delta_2\omega(\partial_k\Xi;Z) \\
                              & = \sum_{m<n}(-1)^{m+n}\Big{(}\omega(\partial_0\partial_0\Xi(m,n);x_m^1,x_n^1,Z)-\omega(\partial_0\partial_1\Xi(m,n);x_m^0+x_m^1,x_n^0+x_n^1,Z)\Big{)}+ \\
                              & \qquad +\sum_{k=2}^{p+2}(-1)^k\sum_{m<n}(-1)^{m+n}\omega(\partial_0\partial_k\Xi(m,n);x_m^0,x_n^0,Z).
\end{align*}
Due to the simplicial identities that we outlined in the proof of proposition \ref{partialDelta}, we know that $\partial_0\partial_0=\partial_0\partial_1$. By setting
\begin{eqnarray*}
\varpi_{mn}(w_1,w_2):=\omega(\partial_0\partial_0\Xi(m,n);w_1,w_2,Z),
\end{eqnarray*}
we can write the sum
\begin{align*}
(\Delta_2\partial+\partial\Delta_2)\omega(\Xi;Z) & =\sum_{m<n}(-1)^{m+n}\Big{(}\varpi(x_m^1,x_n^1)+\varpi(x_m^1,x_n^1)-\varpi(x_m^0+x_m^1,x_n^0+x_n^1)\Big{)} \\
                                                 & =\sum_{m<n}(-1)^{m+n}\Big{(}\varpi(x_n^1,x_m^0)-\varpi(x_m^1,x_n^0)\Big{)}
\end{align*}
which is precisely $-\Delta^2(\Xi;Z)$. 

\end{proof}
We would like to close this section by pointing out that theorem \ref{The2AlgCx} implies that there is a double complex that allows us to regard the Lie $2$-algebra cohomology with values in a $2$-representation as the total cohomology of a double complex, just as in the case of trivial coefficients. Indeed, we can think of the difference maps as being maps of double complexes between the $p$-pages (cf.proposition \ref{p-pagDblCx}). By considering the total complex of the $p$-pages, we get an honest double complex, whose columns are the total complexes of the $p$-pages and whose horizontal differentials are the maps of complexes induced by the difference maps. The total complex of this double complex (and its cohomology) coincides with that of theorem \ref{The2AlgCx}.


\section[2-cohomology of Lie 2-algebras]{2-cohomology of Lie 2-algebras}\label{2AlgCoh}
\sectionmark{Cohomology}

\subsection{2-cohomology with trivial coefficients}\label{AlgDcx}

We give an interpretation of $H^2_{tot}(\gg_1 )$. A $2$-cocycle consists of a pair of functions $(\omega,\varphi)\in (\hh^*\wedge\hh^*)\oplus \gg_1^*$ such that:\\
1) $\delta \omega = 0$, i.e. $-\omega([y_0,y_1] ,y_2)+\omega([y_0 ,y_2], y_1)-\omega([y_1, y_2],y_0)=0$ for all triples $y_0,y_1,y_2\in \hh$ \\
2) $\partial \varphi =0$, i.e. $\varphi(x_2 ,y)-\varphi(x _1 +x _2 ,y)+\varphi(x_1 ,y+\mu (x_2))=0$ for all $x_1,x_2\in\gg$ and $y\in\hh$. Due to linearity, this boils down to $\varphi(0,y+\mu(x_2))=0$ for all $y\in\hh$ and $x_2\in\gg$.\\
3) $\partial \omega+\delta \varphi=0$, i.e. $\omega(y_0,y_1)-\omega(y_0+\mu (x_0),y_1+\mu (x_1))= \varphi([(x_0,y_0),(x_1,y_1)]_\Lie)$ for all pairs $(x_0,y_0),(x_1,y_1)\in\gg\oplus\hh$. Again, due to bilinearity of $\omega$ and linearity of $\varphi$, this equation can be rewritten as
\begin{eqnarray*}
-\omega(y_0,\mu(x_1))-\omega(\mu(x_0),\mu(x_1))-\omega(\mu(x_0),y_1)=\varphi(\Lie_{y_0}x_1-\Lie_{y_1}x_0,0)+\varphi([x_0,x_1],[y_0,y_1]).
\end{eqnarray*}
Notice that this equation is equivalent to the simpler $\varphi(\Lie_{y}x,0)=-\omega(y,\mu(x))$. \\
It is rather known that if $\delta \omega=0$, $\omega$ induces an (central) extension of $\hh$, 
\begin{eqnarray*}
\xymatrix{
0 \ar[r] & \Rr \ar[r]^{\bar{0}\times I\quad} & \hh\oplus ^\omega \Rr  \ar[r]^{\quad pr_1} & \hh \ar[r] & 0,
}
\end{eqnarray*}
where $\hh\oplus ^\omega\Rr$ is the semi-direct sum induced by $\omega$, that is the space $\hh\oplus\Rr$ endowed with the twisted bracket
\begin{eqnarray*}
[(y_0,\lambda_0),(y_1,\lambda_1)]_\omega=([y_0,y_1],-\omega(y_0,y_1)).
\end{eqnarray*}
\begin{lemma}
If $d(\omega,\varphi)=0$, then 
\begin{eqnarray*}
\xymatrix{
\mu _\varphi: \gg \ar[r] & \hh\oplus ^\omega\Rr : x \ar@{|->}[r] & (\mu(x),\varphi(x,0))
}
\end{eqnarray*}
defines a crossed module for the action $\Lie_{(y,\lambda)}x:=\Lie_y x$.
\end{lemma}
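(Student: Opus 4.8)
The plan is to check the two structural axioms in the definition of a crossed module for the pair $(\mu_\varphi,\Lie)$ with target the central extension $\hh\oplus^\omega\Rr$: namely that $\mu_\varphi$ is a Lie algebra homomorphism and that $\Lie_{(y,\lambda)}x:=\Lie_y x$ is a Lie algebra action of $\hh\oplus^\omega\Rr$ on $\gg$ by derivations, and then the two compatibility axioms, equivariance and infinitesimal Peiffer. The guiding observation is that three of the four conditions are inherited essentially unchanged from the crossed module structure of $\gg_1$ together with the fact that $\hh\oplus^\omega\Rr\to\hh$ is a central extension; only one component of the homomorphism condition genuinely uses the cocycle equations.

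Before doing the checks, I would distill from $d(\omega,\varphi)=0$ the two scalar identities that carry all the content. From the simplified form of condition 3) one has $\varphi(\Lie_y x,0)=-\omega(y,\mu(x))$ for $y\in\hh$, $x\in\gg$; and specializing the full form of condition 3) to $y_0=y_1=0$ (where the $\omega$-terms with a zero entry and $\varphi(0,0)$ all vanish) yields $\varphi([x_0,x_1],0)=-\omega(\mu(x_0),\mu(x_1))$. Producing this last identity is the one computationally substantive step and the main obstacle, in the sense that it is the only place where the cocycle equations, rather than the ambient crossed module data, are invoked.

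Granting these, the homomorphism property of $\mu_\varphi$ follows at once: the $\hh$-component of $\mu_\varphi([x_0,x_1])$ equals $[\mu(x_0),\mu(x_1)]$ because $\mu$ is already a morphism, and the $\Rr$-component is exactly $\varphi([x_0,x_1],0)=-\omega(\mu(x_0),\mu(x_1))$, matching the cocycle term of $[\mu_\varphi(x_0),\mu_\varphi(x_1)]_\omega$. For the action, I would note that $\Lie_{(y,\lambda)}$ is independent of $\lambda$, so the prescribed action is the composite $\Lie\circ pr_1$ of the original action with the projection $pr_1:\hh\oplus^\omega\Rr\to\hh$; since $pr_1$ is a Lie algebra homomorphism and $\Lie:\hh\to\ggl(\gg)$ is a homomorphism into derivations, the composite is again a homomorphism by derivations, so this axiom is inherited for free.

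Finally, equivariance $\mu_\varphi(\Lie_{(y,\lambda)}x)=[(y,\lambda),\mu_\varphi(x)]_\omega$ splits into its $\hh$-component, which is the original equivariance $\mu(\Lie_y x)=[y,\mu(x)]$, and its $\Rr$-component, which is precisely $\varphi(\Lie_y x,0)=-\omega(y,\mu(x))$; and the infinitesimal Peiffer identity $\Lie_{\mu_\varphi(x_0)}x_1=[x_0,x_1]$ collapses, since $\Lie_{\mu_\varphi(x_0)}=\Lie_{\mu(x_0)}$, to the infinitesimal Peiffer identity already holding in $\gg_1$. Each is then a one-line consequence, which completes the verification.
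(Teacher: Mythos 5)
Your proposal is correct and follows essentially the same route as the paper: both verify the four crossed-module axioms, with the action, the $\hh$-components of equivariance and of the homomorphism property, and the infinitesimal Peiffer identity inherited from $\gg_1$ (and the independence of the action from $\lambda$), while the two $\Rr$-components are supplied by cocycle condition 3), in its simplified form $\varphi(\Lie_y x,0)=-\omega(y,\mu(x))$ and in its specialization to $y_0=y_1=0$ respectively. One minor remark: your framing sentence that only one component of the homomorphism condition genuinely uses the cocycle equations undersells your own argument, since the $\Rr$-component of equivariance also invokes $\varphi(\Lie_y x,0)=-\omega(y,\mu(x))$, exactly as in the paper's proof.
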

\begin{proof}
First, let us verify that $\mu _\varphi$ is indeed a Lie algebra homomorphism. It is clearly linear and
\begin{align*}
\mu _\varphi([x_0,x_1]) & = (\mu([x_0,x_1]),\varphi([x_0,x_1],0))        \\
                        & = ([\mu(x_0),\mu(x_1)],-\omega(\mu(x_0),\mu(x_1))) \\
                        & = [(\mu(x_0),\varphi(x_0,0)),(\mu(x_1),\varphi(x_1,0))]_\omega ,
\end{align*}
here $\partial \omega((0,x_0),(0,x_1))=\delta \varphi((0,x_0),(0,x_1))$ justifies the second equality. Now, the action is still a Lie algebra homomorphism and is still by derivations because $\lambda$ does not change the action
\begin{eqnarray*}
\Lie_{[(y_0,\lambda_0),(y_1,\lambda_1)]}x=\Lie_{[y_0,y_1]}x=[\Lie_{y_0},\Lie_{y_1}]x =[\Lie_{(y_0,\lambda_0)},\Lie_{(y_1,\lambda_1)}]x, 
\end{eqnarray*}
\begin{eqnarray*}
\Lie_{(y,\lambda)}[x_1,x_2]=\Lie_y[x_1,x_2]=[\Lie_y x_1,x_2]+[x_1,\Lie_y x_2]=[\Lie_{(y,\lambda)} x_1,x_2]+[x_1,\Lie_{(y,\lambda)} x_2].
\end{eqnarray*}
As for the equivariance of $\mu_\varphi$, on the one hand we have got
\begin{eqnarray*}
\mu _\varphi (\Lie_{(y,\lambda )}x)=(\mu(\Lie_y x),\varphi(\Lie_y x,0)),
\end{eqnarray*}
while on the other,
\begin{align*}
[(y,\lambda ),\mu_\varphi(x)]_\omega & = [(y,\lambda ),(\mu(x),\varphi(x,0))]_\omega \\
                                     & = ([y,\mu(x)],-\omega(y,\mu(x))).                                     
\end{align*}
The first entry coincides because $\mu$ is a crossed module morphism, whereas, as we pointed out, $\partial\omega+\delta\varphi=0$ is equivalent to
\begin{eqnarray*}
\varphi(\Lie_{y}x,0)=-\omega(y,\mu(x)).
\end{eqnarray*}
Finally, one sees that the infinitesimal Peiffer equation holds as,
\begin{align*}
\Lie_{\mu_\varphi(x_2)}x_1 & =\Lie_{(\mu(x_2),\varphi(x_2,0))}x_1 \\
                           & =\Lie_{\mu(x_2)}x_1 =[x_2,x_1];
\end{align*} 
thus proving the lemma.

\end{proof}
As a consequence of this lemma, we see that there is an induced short exact sequence of Lie $2$-algebras that we write using their associated crossed modules
\begin{eqnarray*}
\xymatrix{
0 \ar[r] & 0 \ar[d]\ar[r] & \gg \ar[d]^{\mu_\varphi}\ar[r]^{Id} & \gg \ar[d]^\mu \ar[r] & 0  \\
0 \ar[r] & \Rr \ar[r]     & \hh\oplus^\omega\Rr \ar[r]^{\quad pr_1}   & \hh \ar[r]            & 0.  
}
\end{eqnarray*}

\begin{prop}
Let $(\omega ,\varphi ),(\omega ',\varphi ')\in\Omega^2_{tot}(\gg_1 )$. If there exists a $\phi\in\Omega^1_{tot}(\gg _1)=\hh^*$, such that $(\omega,\varphi)-(\omega ',\varphi ')=d\phi$, then the induced extensions are isomorphic.
\end{prop}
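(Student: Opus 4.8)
The plan is to produce the required isomorphism of extensions directly from the degree-one cochain $\phi\in\hh^*$, following the classical template: a coboundary $d\phi$ encodes precisely the ``shift'' that identifies the two extended objects. First I would unpack the equation $(\omega,\varphi)-(\omega',\varphi')=d\phi$ into its two homogeneous components. Since $\phi$ sits in bidegree $(p,q)=(0,1)$ and $d=\delta+(-1)^q\partial$, the $\bigwedge^2\hh^*$-component gives $\omega-\omega'=\delta\phi$, that is $(\omega-\omega')(y_0,y_1)=-\phi([y_0,y_1])$, while the $\gg_1^*$-component gives $\varphi-\varphi'=-\partial\phi$. Evaluating $\partial\phi$ on an arrow $(x,y)$ through the face maps $\hat{s},\hat{t}$ yields $\partial\phi(x,y)=\phi(y)-\phi(y+\mu(x))=-\phi(\mu(x))$, so that $(\varphi-\varphi')(x,y)=\phi(\mu(x))$; in particular this difference depends only on the $\gg$-component, consistently with the cocycle condition $\varphi(0,\cdot)=0$ recorded above.

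With these two identities in hand, I would define the candidate isomorphism of crossed modules as the pair $(\mathrm{Id}_\gg,\Phi)$, where
\[
\Phi:\hh\oplus^{\omega'}\Rr\longrightarrow\hh\oplus^{\omega}\Rr,\qquad \Phi(y,\lambda)=(y,\lambda+\phi(y)).
\]
This is a linear isomorphism with inverse $(y,\nu)\mapsto(y,\nu-\phi(y))$. The heart of the verification is that $\Phi$ is a Lie algebra homomorphism: comparing $\Phi[(y_0,\lambda_0),(y_1,\lambda_1)]_{\omega'}$ with $[\Phi(y_0,\lambda_0),\Phi(y_1,\lambda_1)]_{\omega}$, both have first component $[y_0,y_1]$, and the second components agree exactly because $-\omega'(y_0,y_1)+\phi([y_0,y_1])=-\omega(y_0,y_1)$, which is the relation $\omega-\omega'=\delta\phi$.

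Next I would check that $(\mathrm{Id}_\gg,\Phi)$ intertwines the structure maps and actions of the two induced crossed modules. Commutation with the crossed-module maps reads $\Phi\circ\mu_{\varphi'}=\mu_\varphi\circ\mathrm{Id}_\gg$; evaluating the left-hand side gives $(\mu(x),\varphi'(x,0)+\phi(\mu(x)))$, which equals $(\mu(x),\varphi(x,0))=\mu_\varphi(x)$ precisely by the identity $(\varphi-\varphi')(x,0)=\phi(\mu(x))$. Equivariance of the actions is immediate, since in both crossed modules the action is $\Lie_{(y,\lambda)}x=\Lie_y x$, which ignores the $\Rr$-coordinate, and $\Phi$ preserves the $\hh$-coordinate; hence $\Lie_{\Phi(y,\lambda)}x=\Lie_y x=\Lie_{(y,\lambda)}x$. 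The infinitesimal Peiffer identity is likewise untouched for the same reason.

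Finally I would observe that $(\mathrm{Id}_\gg,\Phi)$ is genuinely an isomorphism of extensions, in the sense of fixing both ends: restricted to the kernel $\mathbb{V}=(0\to\Rr)$ it is the identity, since $\Phi(0,\lambda)=(0,\lambda)$, and it covers the identity on the quotient $\gg_1=(\gg\to\hh)$, since $pr_1\circ\Phi=pr_1$ and the top map is $\mathrm{Id}_\gg$; thus both squares of the extension diagram commute. I do not expect any real obstacle beyond carefully pinning down the sign conventions for $\delta$ and $\partial$ on degree-one cochains: the homomorphism condition on $\Phi$ and the crossed-module condition are coupled, so the two components of $d\phi$ are forced to supply exactly the relations needed (up to the sign of $\phi$ inside $\Phi$), and the remaining computations are the same short ones used to prove the preceding lemma. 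This is the Lie $2$-algebra analogue of the classical statement that cohomologous $2$-cocycles yield isomorphic central extensions.
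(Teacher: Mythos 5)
Your proposal is correct and follows essentially the same route as the paper: the isomorphism is the shear $(y,\lambda)\mapsto(y,\lambda\pm\phi(y))$ on the object extension (yours is simply the inverse of the paper's map, going from $\hh\oplus^{\omega'}\Rr$ to $\hh\oplus^{\omega}\Rr$), paired with the identity on $\gg$, with the two components of $d\phi$ supplying exactly the bracket compatibility and the intertwining of $\mu_\varphi$ with $\mu_{\varphi'}$. Your verification is in fact slightly more complete, since you check the Lie algebra homomorphism property explicitly where the paper merely recalls it from the classical theory.
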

\begin{proof}
First, recall that the object extensions are isomorphic via
\begin{eqnarray*}
\begin{pmatrix}
    I     & 0 \\
    -\phi & 1 
\end{pmatrix}\xymatrix{ 
:\hh\oplus^\omega\Rr \ar[r] & \hh\oplus^{\omega '}\Rr:(y,\lambda ) \ar@{|->}[r] & (y,\lambda-\phi (y)).
}
\end{eqnarray*}
We claim that this together with the identity of $\gg$ induce the isomorphism between the extensions. Indeed, using the notation from the previous lemma
\begin{align*}
\begin{pmatrix}
    I     & 0 \\
    -\phi & 1 
\end{pmatrix} \mu _\varphi(x) & =\begin{pmatrix}
    I     & 0 \\
    -\phi & 1 
\end{pmatrix} (\mu(x),\varphi(x,0)) \\
                                & =(\mu(x),\varphi(x,0)-\phi(\mu(x))) \\
                                & =(\mu(x),\varphi '(x,0)) = \mu _{\varphi '}(x),
\end{align*}
also, trivially $Id(\Lie_{(y,\lambda )}x)=\Lie_{(y,\lambda-\phi(y))}Id(x)$, thus finishing the proof.

\end{proof}
The fact that the second $2$-cohomology group classifies a certain type of extensions is a happy accident, because it suggests an extension of the classical theory of Lie algebra cohomology. Since in the classical theory extensions are classified by a cohomology with coefficients in a representation, we will devote the following subsection to study extensions of Lie $2$-algebras.

\subsection{Extensions of Lie 2-algebras and their associated cocycle equations}
We study abstract extensions to easily relate them to the spaces of $1$-cochains and $2$-cochains and the corresponding subspaces of $2$-cocycles and $2$-coboundaries. We look to express extensions in terms of simpler data and decide the equations these need to satisfy. \\
\begin{prop}\label{2-cocycles}
Let $\rho$ be a $2$-representation of $\xymatrix{\gg \ar[r]^\mu & \hh}$ on $\xymatrix{W \ar[r]^\phi & V}$. Given a triple $(\omega,\alpha,\varphi)\in((\hh^*\wedge\hh^*)\otimes V)\oplus(\hh^*\otimes\gg^*\otimes W)\oplus(\gg^*\otimes V)$, it defines a $2$-extension  \begin{eqnarray*}
\xymatrix{
0 \ar[r] & W \ar[d]_\phi\ar@{^{(}->}[r] & \gg {}_{\rho_0^1\circ\mu}\oplus^{\omega_1}W \ar[d]_\epsilon\ar[r]^{\qquad pr_1} & \gg \ar[d]^\mu\ar[r] & 0  \\
0 \ar[r] & V \ar@{^{(}->}[r]            & \hh {}_{\rho_0^0}\oplus^{\omega_1}V \ar[r]_{\qquad pr_1}                        & \hh \ar[r]           & 0, 
}
\end{eqnarray*}
with 
\begin{eqnarray*}
\omega_1(x_0,x_1)           & := & \rho_1(x_1)\varphi(x_0)+\alpha(\mu(x_0);x_1),  \\
\epsilon(x,w)                & = & (\mu(x),\phi(w)+\varphi(x)),                   \\
\Lie^{\epsilon}_{(y,v)}(x,w) & = & (\Lie _y x,\rho_0^1(y)w-\rho_1(x)v-\alpha(y;x))
\end{eqnarray*} 
if, and only if the following equations are satisfied 
\begin{itemize}
\item[i)] $\delta\omega =0$. Explicitely, for all triples $y_0,y_1,y_2\in\hh$, 
\begin{eqnarray*}
\rho^0_0(y_0)\omega(y_1,y_2)-\omega([y_0,y_1],y_2)+\circlearrowleft =0.
\end{eqnarray*}  
Here, $\circlearrowleft$ stands for cyclic permutations. 
\item[ii)] $\rho_1(x_0)\varphi(x_1)+\alpha(\mu(x_1);x_0)+\circlearrowleft =0$
\item[iii)] For all triples $x_0,x_1,x_2\in\gg$, 
\begin{eqnarray*}
\rho_0^1(\mu(x_0))(\rho_1(x_2)\varphi(x_1)+\alpha(\mu(x_2);x_1))-\rho_1(x_2)\varphi([x_0,x_1])-\alpha(\mu([x_0,x_1]);x_2)+\circlearrowleft =0.
\end{eqnarray*}
\item[iv)] $\omega(y,\mu(x))=\phi\circ\alpha(y;x)+\rho^0_0(y)\varphi(x)-\varphi(\Lie_y x)$
\item[v)] $\alpha([y_0,y_1];x)-\rho_1(x)\omega(y_0,y_1)=\rho^1_0(y_0)\alpha(y_1;x)-\alpha(y_1;\Lie_{y_0}x)+\circlearrowleft$
\item[vi)] For the contraction $a_y:=\alpha(y;-)\in\gg^*\otimes W$ seen as a $1$-cocycle with values in $\rho^1_0\circ\mu$,
\begin{eqnarray*}
\delta a_y(x_0,x_1)=\rho^1_0(y)\omega_1(x_0,x_1)-(\omega_1(\Lie _y x_0,x_1)-\omega_1(\Lie _y x_1,x_0))
\end{eqnarray*}
\end{itemize}
\end{prop}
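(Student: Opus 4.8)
The plan is to regard the displayed data as a candidate crossed module and to verify the crossed-module axioms one at a time, showing that each axiom is equivalent to exactly one of the equations i)--vi), while the two remaining axioms hold automatically. Concretely, I would set $\mathfrak{e}_0=\hh\oplus V$ with the twisted bracket $[(y_0,v_0),(y_1,v_1)]_0=([y_0,y_1],\rho_0^0(y_0)v_1-\rho_0^0(y_1)v_0-\omega(y_0,y_1))$ and $\mathfrak{e}_1=\gg\oplus W$ with the analogous bracket twisted by $\omega_1$ relative to the representation $\rho_0^1\circ\mu$, exactly as in the discussion preceding the proof of Proposition \ref{Ind2Rep}. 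Exactness of the two rows, commutativity of the squares with $j_0,j_1,\pi_0,\pi_1$, and the fact that $\Lie^\epsilon$ lands in $W$ (since $\rho_0^1(y)\colon W\to W$, $\rho_1(x)\colon V\to W$ and $\alpha(y;-)\in W$) are all immediate from the explicit forms of the maps, so the entire content sits in the crossed-module axioms.

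First I would dispose of the object and arrow Lie algebras. Antisymmetry of the bracket on $\mathfrak{e}_0$ is automatic because $\omega\in\hh^*\wedge\hh^*$, and its Jacobi identity is, by the standard computation for twisted semidirect sums, the Chevalley--Eilenberg cocycle condition $\delta\omega=0$, which is equation i). For $\mathfrak{e}_1$ the bracket formula is antisymmetric precisely when $\omega_1$ is; since $\omega_1(x_0,x_1)=\rho_1(x_1)\varphi(x_0)+\alpha(\mu(x_0);x_1)$, this antisymmetry is equation ii), and its Jacobi identity is $\delta\omega_1=0$ for the representation $\rho_0^1\circ\mu$, which upon substituting the formula for $\omega_1$ is equation iii).

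Next I would treat the structural map and the action. Expanding the equivariance relation $\epsilon(\Lie^\epsilon_{(y,v)}(x,w))=[(y,v),\epsilon(x,w)]_0$ and using the cancellations produced by $\phi\rho_0^1(y)=\rho_0^0(y)\phi$ and $\phi\rho_1(x)=\rho_0^0(\mu(x))$, together with equivariance $\mu(\Lie_y x)=[y,\mu(x)]$ of the original crossed module, the surviving terms are exactly equation iv). Expanding the homomorphism condition for $\epsilon$ and using the same identities together with the original Peiffer identity $\Lie_{\mu(x_0)}x_1=[x_0,x_1]$, I expect it to reduce to an identity that follows from iv) evaluated at $y=\mu(x_0)$, so $\epsilon$ being a morphism imposes no new constraint. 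The condition $\Lie^\epsilon_{[(y_0,v_0),(y_1,v_1)]_0}=[\Lie^\epsilon_{(y_0,v_0)},\Lie^\epsilon_{(y_1,v_1)}]$ splits according to components: the $\gg$-component is that $\Lie$ is an action of $\hh$, the $w$- and $v$-carrying terms collapse using that $\rho_0^1$ is a homomorphism and $\rho_1(\Lie_y x)=\rho_0^1(y)\rho_1(x)-\rho_1(x)\rho_0^0(y)$, and the residual terms carrying $\alpha$ and $\omega$ are equation v). The derivation condition $\Lie^\epsilon_{(y,v)}[(x_0,w_0),(x_1,w_1)]_1=[\Lie^\epsilon_{(y,v)}(x_0,w_0),(x_1,w_1)]_1+[(x_0,w_0),\Lie^\epsilon_{(y,v)}(x_1,w_1)]_1$, after the representation terms cancel, yields precisely the compatibility of $a_y=\alpha(y;-)$ with the bracket twist $\omega_1$, i.e. equation vi). Finally the infinitesimal Peiffer identity $\Lie^\epsilon_{\epsilon(x_0,w_0)}(x_1,w_1)=[(x_0,w_0),(x_1,w_1)]_1$ holds unconditionally, since this is exactly how $\omega_1$ was defined in the discussion before Proposition \ref{Ind2Rep}, so it adds no equation; this is why the list contains six equations rather than eight.

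The main obstacle will be the bookkeeping in the two heaviest axioms, namely the homomorphism property of $\Lie^\epsilon$ (giving v)) and the derivation property (giving vi)). Each requires fully expanding a doubly twisted bracket on $\mathfrak{e}_1$, then isolating the terms that survive after applying the $2$-representation relations and the original crossed-module identities, and checking that the survivors assemble into the stated cocycle expressions with the correct signs and cyclic-permutation patterns. I would organize these by grouping terms according to which variable they carry ($w$, $v$, $\alpha$, or the twists $\omega$ and $\omega_1$) and verifying cancellation block by block; the ``only if'' direction is then obtained by reading the same equivalences backwards, since every step above is an identity matching an axiom to a single equation.
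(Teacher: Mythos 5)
Your proposal matches the paper's proof essentially step for step: the same identification of i) with the Jacobi identity for the twisted bracket on $\hh\oplus V$, ii) and iii) with antisymmetry and the cocycle condition for $\omega_1$, iv) with equivariance of $\epsilon$ (with the homomorphism property recovered as its specialization at $y=\mu(x)$), v) and vi) with the action and derivation properties of $\Lie^\epsilon$, and the observation that the infinitesimal Peiffer identity holds automatically by the definition of $\omega_1$. The approach and decomposition are the same as the paper's; no gaps.
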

There is nothing to these equations, in fact in most of the examples we have computed explicitly, they are redundant. In the proof, the reader might find the meaning of each of these equations.
\begin{proof}
We make the computations necessary to prove that a triple $(\omega ,\alpha ,\varphi)$ subject to the equations in the statement defines a $2$-extension. \\
First, the usual theory of Lie algebra extensions, tells us that item $i)$ says that $\omega$ is a $2$-cocycle with values in the representation $\rho_0^0$, and, as such, it defines a Lie bracket on $\hh\oplus V$. Analogously, $\omega_1$ defines an extension if, and only if $\omega_1$ is a $2$-cocycle with values in the representation $\rho_0^1\circ\mu$. With these, the equations in the statement have the following meaning
\begin{itemize}
\item[ii)] says $\omega_1$ is skew-symmetric. 
\item[iii)] says $\omega_1$ is a $2$-cocycle.
\item[iv)] evaluated at $y=\mu(x')$ says that $\epsilon$ is a Lie algebra homomorphism; indeed, 
\begin{align*}
\epsilon([(x',w'),(x,w)]) & =\epsilon([x',x],\rho_0^1(\mu(x'))w-\rho_0^1(\mu(x))w'-\omega_1(x',x)) \\
                          & =(\mu([x',x]),\phi(\rho_0^1(\mu(x'))w-\rho_0^1(\mu(x))w'-\omega_1(x',x))+\varphi([x',x])),
\end{align*}
while on the other hand,
\begin{align*}
[\epsilon(x',w'),\epsilon(x,w)] & =[(\mu(x'),\phi(w')+\varphi(x')),(\mu(x),\phi(w)+\varphi(x))] \\
                                & =([\mu(x'),\mu(x)],-\omega(\mu(x'),\mu(x))+ \\
                                & \qquad\qquad\rho_0^0(\mu(x'))(\phi(w)+\varphi(x))-\rho_0^0(\mu(x))(\phi(w')+\varphi(x'))).
\end{align*}
Clearly, the first entries coincide; moreover, since $\rho_0^0(\mu(x))=\phi\rho_1(x)$, $\epsilon$ is a Lie algebra homomorphism if, and only if
\begin{eqnarray*}
\varphi([x',x])-\phi\circ\omega_1(x',x)=\rho_0^0(\mu(x'))\varphi(x)-\rho_0^0(\mu(x))\varphi(x')-\omega(\mu(x'),\mu(x)).
\end{eqnarray*}
Replacing, the definition of $\omega_1$ yields
\begin{eqnarray*}
\varphi([x',x])-\phi\circ\alpha(\mu(x');x)=\rho_0^0(\mu(x'))\varphi(x)-\omega(\mu(x'),\mu(x)),
\end{eqnarray*}
which is precisely the equation in item $iv)$. Additionally, this equation also implies that $\epsilon$ is equivariant, as the following computation shows. 
\begin{align*}
\epsilon(\Lie^{\epsilon}_{(y,v)}(x,w)) & =\epsilon(\Lie _y x,\rho_0^1(y)w-\rho_1(x)v-\alpha(y;x)) \\
                                       & =(\mu(\Lie _y x),\phi(\rho_0^1(y)w-\rho_1(x)v-\alpha(y;x))+\varphi(\Lie _y x)),
\end{align*}
while on the other hand,
\begin{align*}
[(y,v),\epsilon(x,w)] & =[(y,v),(\mu(x),\phi(w)+\varphi(x))] \\
                      & =([y,\mu(x)],\rho_0^0(y)(\phi(w)+\varphi(x))-\rho_0^0(\mu(x))v-\omega(y,\mu(x))).
\end{align*}
Again, it is clear that the first entries coincide, and using again the relation $\rho_0^0(\mu(x))=\phi\rho_1(x)$ together with $\rho_0(y)\in\ggl (\phi)_0$, $\epsilon$ is equivariant if, and only if
\begin{eqnarray*}
\varphi(\Lie _y x)-\phi\circ\alpha(y;x)=\rho_0^0(y)\varphi(x)-\omega(y,\mu(x)).
\end{eqnarray*}
\item[v)] says $\Lie$ is a Lie algebra homomorphism and thus an action: 
\begin{align*}
\Lie^{\epsilon}_{[(y_0,v_0),(y_1,v_1)]}(x,w) & =\Lie^{\epsilon}_{([y_0,y_1],\rho_0^0(y_0)v_1-\rho_0^0(y_1)v_0-\omega(y_0,y_1))}(x,w) \\
                                             & =(\Lie _{[y_0,y_1]} x,\rho_0^1([y_0,y_1])w-\alpha([y_0,y_1];x) \\
                                             & \qquad\qquad-\rho_1(x)(\rho_0^0(y_0)v_1-\rho_0^0(y_1)v_0-\omega(y_0,y_1))). 
\end{align*}
On the other hand,
\begin{align*}
\Lie^{\epsilon}_{(y_0,v_0)}\Lie^{\epsilon}_{(y_1,v_1)}(x,w) 
                        & =\Lie^{\epsilon}_{(y_0,v_0)}(\Lie _{y_1} x,\rho_0^1(y_1)w-\rho_1(x)v_1-\alpha(y_1;x)) \\
                        & =(\Lie _{y_0} \Lie _{y_1} x,\rho_0^1(y_0)(\rho_0^1(y_1)w-\rho_1(x)v_1-\alpha(y_1;x))  \\
                        & \qquad\qquad-\rho_1(\Lie _{y_1} x)v_0-\alpha(y_0;\Lie _{y_1} x)).
\end{align*}
Then, considering the cyclic difference, one realizes that the first entries coincide. Using the fact that $\rho_0^1$ is a Lie algebra representation and the fact that $\rho_1$ respects is compatible with the actions, one is left with the relation
\begin{eqnarray*}
\rho_1(x)\omega(y_0,y_1)-\alpha([y_0,y_1];x)=\rho_0^1(y_1)\alpha(y_0;x)+\alpha(y_1;\Lie _{y_0} x))-\circlearrowleft.
\end{eqnarray*}
\item[vi)] says that $\Lie$ acts by derivations:
\begin{align*}
\Lie^{\epsilon}_{(y,v)}[(x_0,w_0),(x_1,w_1)] & =\Lie^{\epsilon}_{(y,v)}([x_0,x_1],\rho_0^1(\mu(x_0))w_1-\rho_0^1(\mu(x_1))w_0-\omega_1(x_0,x_1)) \\
                                             & =(\Lie _{y} [x_0,x_1],\rho_0^1(y)(\rho_0^1(\mu(x_0))w_1-\rho_0^1(\mu(x_1))w_0-\omega_1(x_0,x_1)) \\
                                             & \qquad\qquad -\rho_1([x_0,x_1])v-\alpha(y;[x_0,x_1])) 
\end{align*}
On the other hand,
\begin{align*}
[\Lie^{\epsilon}_{(y,v)}(x_0,w_0),(x_1,w_1)] 
                        & =[(\Lie _{y} x_0,\rho_0^1(y)w_0-\rho_1(x_0)v-\alpha(y;x_0)),(x_1,w_1)] \\
                        & =([\Lie _{y} x_0,x_1],\rho_0^1(\mu(\Lie _{y} x_0))w_1-\omega_1(\Lie _{y} x_0,x_1) \\
                        & \qquad\qquad -\rho_0^1(\mu(x_1))(\rho_0^1(y)w_0-\rho_1(x_0)v-\alpha(y;x_0))).
\end{align*}
Considering again the cyclic difference, one realizes that the first entries coincide. Since $\mu$ is equivariant and $\rho_0^1$ is a Lie algebra representation, we are left with the relation
\begin{eqnarray*}
-\rho_0^1(y)\omega_1(x_0,x_1)-\alpha(y;[x_0,x_1])=-\omega_1(\Lie _{y} x_0,x_1)+\rho_0^1(\mu(x_1))\alpha(y;x_0))-\circlearrowleft.
\end{eqnarray*}
\end{itemize}
Finally, the infinitesimal Peiffer equation holds by the very definition of $\omega_1$; indeed,
\begin{align*}
[(x_0,w_0),(x_1,w_1)] & =([x_0,x_1],\rho_0^1(\mu(x_0))w_1-\rho_0^1(\mu(x_1))w_0-\omega_1(x_0,x_1)),
\end{align*}
whereas
\begin{align*}
\Lie^{\epsilon}_{\epsilon(x_0,w_0)}(x_1,w_1) & =\Lie^{\epsilon}_{(\mu(x_0),\phi(w_0)+\varphi(x_0))}(x_1,w_1) \\
                                             & =(\Lie _{\mu(x_0)}x_1,\rho_0^1(\mu(x_0))w_1-\rho_1(x_1)(\phi(w_0)+\varphi(x_0))-\alpha(\mu(x_0),x_1)).
\end{align*}
Due to the relation $\rho_0^1(\mu(x))=\rho_1(x)\phi$, we are left precisely with the defining equation for $\omega_1$.

\end{proof}

Next, we analyze what happens when we have got equivalent extensions as in
\begin{eqnarray*}
\xymatrix{
         &                     & \mathfrak{e}_1 \ar[dd]\ar@{.>}[ddr]^{\psi_1}\ar[drr] &          &                    &    \\
0 \ar[r] & W \ar[dd]\ar[ur]\ar[drr] &                                 &                          & \gg \ar[r] \ar[dd] & 0  \\
         &                     & \mathfrak{e}_0 \ar[drr]\ar@{.>}[ddr]_{\psi_0} & \mathfrak{f}_1 \ar[dd]\ar[ur] &      &    \\
0 \ar[r] & V \ar[ur]\ar[drr]        &                                 &                          & \hh \ar[r]         & 0. \\
         &                          &                                          & \mathfrak{f}_0 \ar[ur]        &      &   
}
\end{eqnarray*}
Picking a splitting of either extension and composing it with the isomorphism, one gets a splitting for the other extension. In picking the splittings compatibly so, the induced $2$-representations are identical. We use these compatible splittings to identify both $\mathfrak{e}$ and $\mathfrak{f}$ with their respective semi-direct sums, and we write $\psi$ in these coordinates. Since both components of $\psi$ are linear, and respect both inclusions and projections
\begin{eqnarray}\label{anIsoOfExts}
\psi_k(z,a)=(z,a+\lambda_k(z))
\end{eqnarray}
for some linear maps $\xymatrix{\lambda_0:\hh \ar[r] & V}$ and $\xymatrix{\lambda_1:\gg \ar[r] & W}$.
\begin{prop}\label{2-coboundaries}
Let $\rho$ be a $2$-representation of $\xymatrix{\gg \ar[r]^\mu & \hh}$ on $\xymatrix{W \ar[r]^\phi & V}$. Given two $2$-cocycles  $(\omega_k,\alpha_k,\varphi_k)$ as in the previous proposition, the induced extensions are equivalent if, and only if there are linear maps $\xymatrix{\lambda_0:\hh \ar[r] & V}$ and $\xymatrix{\lambda_1:\gg \ar[r] & W}$ verifying  
\begin{itemize}
\item $\omega_2-\omega_1 =\delta\lambda_0$. Explicitly, for all triples $y_0,y_1\in\hh$, 
\begin{eqnarray*}
\omega_2(y_0,y_1)-\omega_1(y_0,y_1) =\rho^0_0(y_0)\lambda_0(y_1)-\rho^0_0(y_1)\lambda_0(y_0)-\lambda_0([y_0,y_1])
\end{eqnarray*}  
\item $\alpha_2(y;x)-\alpha_1(y;x)=\rho^1_0(y)(\lambda_1(x))-\lambda_1(\Lie _y x)-\rho_1(x)(\lambda_0(y))$
\item $\varphi_2(x)-\varphi_1(x)=\lambda_0(\mu(x))-\phi(\lambda_1(x))$
\end{itemize}
\end{prop}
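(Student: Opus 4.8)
The plan is to exploit the normal form (\ref{anIsoOfExts}) already isolated for an equivalence $\psi=(\psi_0,\psi_1)$ of the two induced extensions: once compatible splittings are fixed, any such $\psi$ is forced to read $\psi_k(z,a)=(z,a+\lambda_k(z))$ for linear maps $\lambda_0\colon\hh\to V$ and $\lambda_1\colon\gg\to W$. Thus an equivalence is \emph{the same datum} as a pair $(\lambda_0,\lambda_1)$ turning $\psi$ into an isomorphism of crossed modules, and the whole content of the proposition is to translate ``$\psi$ is a morphism of Lie $2$-algebras'' into the three displayed equations. I would run both implications simultaneously, since each displayed condition arises as an honest equivalence obtained by equating two explicit expressions.

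First I would recall that a morphism of crossed modules amounts to four requirements: $\psi_0$ and $\psi_1$ are Lie algebra homomorphisms, $\psi$ intertwines the crossed module maps ($\epsilon_2\circ\psi_1=\psi_0\circ\epsilon_1$), and $\psi$ is equivariant for the actions. Using the bracket $[(y_0,v_0),(y_1,v_1)]=([y_0,y_1],\rho^0_0(y_0)v_1-\rho^0_0(y_1)v_0-\omega_k(y_0,y_1))$ on the semi-direct sum $\hh\,{}_{\rho^0_0}\oplus^{\omega_k}V$, imposing that $\psi_0$ preserve brackets produces exactly the first equation $\omega_2-\omega_1=\delta\lambda_0$. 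Comparing $\psi_0\circ\epsilon_1$ with $\epsilon_2\circ\psi_1$ by means of $\epsilon_k(x,w)=(\mu(x),\phi(w)+\varphi_k(x))$ yields the third equation $\varphi_2-\varphi_1=\lambda_0\circ\mu-\phi\circ\lambda_1$. Finally, writing out $\psi_1(\Lie^{\epsilon_1}_{(y,v)}(x,w))=\Lie^{\epsilon_2}_{\psi_0(y,v)}\psi_1(x,w)$ with the action $\Lie^{\epsilon}_{(y,v)}(x,w)=(\Lie_y x,\rho^1_0(y)w-\rho_1(x)v-\alpha_k(y;x))$ of Proposition \ref{2-cocycles} gives the second equation.

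The point that deserves care, and which I expect to be the only genuine obstacle, is the remaining requirement: that $\psi_1$ be a Lie algebra homomorphism for the twisted brackets on $\gg\,{}_{\rho^1_0\circ\mu}\oplus W$, whose defining $W$-valued $2$-cocycle is $\rho_1(x_1)\varphi_k(x_0)+\alpha_k(\mu(x_0);x_1)$ for the $k$th extension. A priori this would impose a further relation among the $\varphi_k,\alpha_k$; the plan is to show it is automatic. Indeed, by the infinitesimal Peiffer identity the bracket on the extension $\mathfrak{e}_1$ is recovered from the action as $[(x_0,w_0),(x_1,w_1)]=\Lie^{\epsilon}_{\epsilon(x_0,w_0)}(x_1,w_1)$; chaining the equivariance of $\psi$ with the compatibility $\psi_0\circ\epsilon_1=\epsilon_2\circ\psi_1$ already established then gives $\psi_1([\,\cdot\,,\,\cdot\,]_1)=[\psi_1(\cdot),\psi_1(\cdot)]_2$ for free. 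Hence no extra equation appears, and the three displayed conditions are precisely equivalent to $\psi$ being an isomorphism of extensions.

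For the converse direction I would simply define $\psi_k(z,a)=(z,a+\lambda_k(z))$ from a given pair $(\lambda_0,\lambda_1)$ satisfying the three equations, observe that each $\psi_k$ is invertible (inverse $a\mapsto a-\lambda_k(z)$) and manifestly respects the inclusions of $W,V$ and the projections onto $\gg,\hh$, and then read the same three computations backwards to conclude that $\psi$ is a crossed module isomorphism, i.e. an equivalence of the induced extensions.
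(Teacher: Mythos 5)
Your proposal is correct, and for the three displayed equations it follows the paper's proof exactly: the first comes from $\psi_0$ preserving the twisted bracket on $\hh\,{}_{\rho^0_0}\oplus^{\omega_k}V$ (the classical cohomologous-cocycle criterion), the third from $\psi_0\circ\epsilon_1=\epsilon_2\circ\psi_1$, and the second from equivariance of $\psi$ for the actions, all read off from the normal form $\psi_k(z,a)=(z,a+\lambda_k(z))$. The one place where you genuinely diverge is the remaining requirement that $\psi_1$ be a homomorphism for the twisted brackets on $\gg\,{}_{\rho^1_0\circ\mu}\oplus^{\omega'_k}W$. The paper settles this by a direct computation: it shows that the three equations force $\omega'_2-\omega'_1=\delta\lambda_1$, using $\rho_1(x)\phi=\rho^1_0(\mu(x))$, so that the top-row cocycles are cohomologous via $\lambda_1$. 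You instead observe that the infinitesimal Peiffer identity recovers the bracket on $\mathfrak{e}_1$ as $[a,b]=\Lie^{\epsilon}_{\epsilon(a)}b$, so that intertwining $\epsilon$ and the actions already forces $\psi_1$ to intertwine the brackets; this is valid (both extensions satisfy Peiffer by Proposition \ref{2-cocycles}) and is arguably the more conceptual route, since it makes transparent why no fourth equation appears. What the paper's computation buys in exchange is the explicit identification of $\lambda_1$ as the cobounding cochain for the top extension, which is what gets matched against the coordinates of $\nabla(\lambda_0,\lambda_1,v)$ in the cohomological discussion that follows; if you adopt your shortcut you should still record the identity $\omega'_2-\omega'_1=\delta\lambda_1$ somewhere, as it is used there.
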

\begin{proof}
The first equation is the classic identification of isomorphic extensions with cohomologous cocycles. \\
The second equation says that if the isomorphim of extensions $\psi$ is defined by the formula \ref{anIsoOfExts}, it respects the actions. Indeed, 
\begin{align*}
    \psi_1(\Lie_{(y,v)}(x,w)) & =\psi_1(\Lie _y x,\rho^1_0(y)w -\rho_1(x)v -\alpha_1(y;x)) \\
                              & =(\Lie_yx,\rho^1_0(y)w-\rho_1(x)v-\alpha_1(y;x)+\lambda_1(\Lie_yx)),
\end{align*}
whereas,
\begin{align*}
    \Lie_{\psi_0(y,v)}\psi_1(x,w) & =\Lie_{(y,v+\lambda_0(y))}(x,w+\lambda_1(x)) \\
                                  & =(\Lie_yx,\rho^1_0(y)(w+\lambda_1(x))-\rho_1(x)(v+\lambda_0(y))-\alpha_2(y;x)) .
\end{align*}
Thus, these expressions coincide if, and only if the second equation from the statement holds.
The third equation says that $\psi$ commutes with the structural maps of the crossed modules. Indeed,
\begin{align*}
 \psi_0(\epsilon_1(x,w)) & =\psi_0(\mu(x),\phi(w)+\varphi_1(x)) \\
                         & =(\mu(x),\phi(w)+\varphi_1(x)+\lambda_0(\mu(x))),
\end{align*}
while on the other hand,
\begin{align*}
 \epsilon_2(\psi_1(x,w)) & =\epsilon_2(x,w+\lambda_1(x)) \\
                         & =(\mu(x),\phi(w+\lambda_1(x))+\varphi_2(x)).
\end{align*}
Thus, these expressions coincide if, and only if the last equation of the statement holds. To conclude the proof, notice that if the equations in the statement are verified, the cocycles
\begin{eqnarray*}
\omega_k'(x_0,x_1)=\rho_1(x_1)\varphi_k(x_0)+\alpha_k(\mu(x_0);x_1),
\end{eqnarray*}
for $k\in\lbrace 1,2\rbrace$ defining the top extensions are cohomologous too. Indeed,
\begin{align*}
    (\omega_2'-\omega_1')(x_0,x_1) & =\rho_1(x_1)(\varphi_2-\varphi_1)(x_0)+(\alpha_2-\alpha_1)(\mu(x_0);x_1) \\
                                   & =\rho_1(x_1)(\lambda_0(\mu(x_0))-\phi(\lambda_1(x_0)))+\rho^1_0(\mu(x_0))(\lambda_1(x_1))+ \\
                                   & \qquad\qquad\qquad\qquad\qquad\qquad -\lambda_1(\Lie_{\mu(x_0)}x_1)-\rho_1(x_1)(\lambda_0(\mu(x_0)) \\
                                   & =-\rho_1(x_1)\phi(\lambda_1(x_0))+\rho^1_0(\mu(x_0))(\lambda_1(x_1))-\lambda_1([x_0,x_1])=\delta\lambda_1(x_0,x_1),
\end{align*}
where the last equality holds, because $\rho$ is a $2$-representation, and therefore, $\rho_1(x)\phi=\rho_0^1(\mu(x))$.

\end{proof}


We conclude this section by studying the cohomology of the complex from theorem \ref{The2AlgCx}. \\
Start out with an element $v\in V=C^0(\gg_1,\phi)=C^{0,0}_0(\gg_1,\phi)$, then its differential is given by
\begin{eqnarray*}
\nabla v=(\delta v,\delta_{(1)}v,\cancelto{0}{\partial v})\in C^1(\gg_1,\phi)
\end{eqnarray*}
If $v$ is a $0$-cocycle, then for all $y\in\hh$ and for all $x\in\gg$,
\begin{eqnarray*}
\rho_0^0(y)v=0 & \textnormal{and} & \rho_1(x)v=0;
\end{eqnarray*}
therefore,
\begin{eqnarray*}
H^0_\nabla(\gg_1,\phi)=V^{\gg_1}:=\lbrace v\in V:\bar{\rho}_{(x,y)}(0,v)=0,\quad\forall (x,y)\in\gg\oplus_\Lie\hh\rbrace ,
\end{eqnarray*}
where $\bar{\rho}$ is the honest representation of proposition \ref{honestAlgRep}. \\
A $1$-cochain $\lambda$ is a triple $(\lambda_0,\lambda_1,v)\in(\hh^*\otimes V)\oplus(\gg^*\otimes W)\oplus V$, and its differential has six entries that we write using their coordinates, i.e. $\nabla\lambda^{p,q}_r\in C^{p,q}_r(\gg _1,\phi)$
\begin{align*}
\nabla\lambda^{0,2}_0 & =\delta\lambda_0                             &                       & & &                               \\
\nabla\lambda^{1,1}_0 & =-\partial\lambda_0+\delta v-\Delta\lambda_1 & \nabla\lambda^{0,1}_1 & =-\delta_{(1)}\lambda_0+\delta'\lambda_1 & & \\
\nabla\lambda^{2,0}_0 & =\partial v=v                                & \nabla\lambda^{1,0}_1 & =\delta_{(1)}v-\partial\lambda _1=\delta_{(1)}v & \nabla\lambda^{0,0}_2 & =\delta_{(1)}\lambda_1 
\end{align*}
Schematically,
\begin{eqnarray*}
\xymatrix{
  & & \delta\lambda_0 \ar@{.}[dl]\ar@{.}[dr] & &  \\
  & -\partial\lambda_0+\delta v-\Delta\lambda_1 & \lambda_0 \ar@{|->}[u]\ar@{|->}[r]\ar@{|->}[l]\ar@{.}[dr] & -\delta_{(1)}\lambda_0+\delta'\lambda_1 & \\
  & v \ar@{|->}[dl]\ar@{|->}[u]\ar@{|->}[dr]\ar@{.}[ur]\ar@{.}[rr] &  & \lambda_1 \ar@{|->}[dr]\ar@{|->}[u]\ar@{|->}[dl]\ar@{|-->}[ull] & \\
v \ar@{.}[uur] \ar@{.}[rr] & & \delta_{(1)}v \ar@{.}[rr] & & \delta_{(1)}\lambda_1 .\ar@{.}[uul]
}
\end{eqnarray*}
Here the solid arrows represent the differentials, the dashed arrow represents the difference map, and the pointed polygons represent elements of the same degree. If $\lambda$ is a $1$-cocylce, then $v=0$, $(\lambda_0,\lambda_1)\in Der(\hh,V)\oplus Der(\gg,W)$ and the following relations hold for all pairs $(y,x)\in\hh\times\gg$:
\begin{eqnarray*}
\phi(\lambda_1(x))=\lambda_0(\mu(x)), \\
\lambda_1(\Lie_y x)=\rho_1(x)\lambda_0(y)-\rho_0^1(y)\lambda_1(x).
\end{eqnarray*}
The first of these relations says that 
\begin{eqnarray*}
\xymatrix{
\bar{\lambda}:\gg\oplus\hh \ar[r] & W\oplus V:(x,y) \ar@{|->}[r] & (\lambda_1(x),\lambda_0(y))
}
\end{eqnarray*}
is a map of $2$-vector spaces. The second, says that $\bar{\lambda}\in Der(\gg\oplus_\Lie\hh,W\oplus V)$ with respect to $\bar{\rho}$. Notice that since $\nabla v$ can also be seen as $\bar{\rho}_{(x,y)}(0,v)$; by analogy, we can define:
\begin{Def}
The \textit{space of derivations} of a Lie $2$-algebra $\gg_1$ with respect to a $2$-representation $\rho$ on the $2$-vector $\mathbb{V}=\xymatrix{W \ar[r]^\phi & V}$ is defined to be
\begin{eqnarray*}
Der(\gg_1,\phi):=\lbrace (\lambda_1,\lambda_0)\in Hom_{2-Vect}(\gg_1,\mathbb{V}):(\lambda_1,\lambda_0)\in Der(\gg\oplus_\Lie\hh,W\oplus V)\rbrace .
\end{eqnarray*}
The space of \textit{inner derivations}, on the other hand, is defined to be
\begin{eqnarray*}
Inn(\gg_1,\phi):=\lbrace (\lambda_1,\lambda_0)\in Der(\gg_1,\phi):(\lambda_1(x),\lambda_0(y))=\bar{\rho}_{(x,y)}(0,v)\textnormal{ for some }v\in V\rbrace .
\end{eqnarray*}
\end{Def}
With these definitions, we can write
\begin{eqnarray*}
H^1_\nabla(\gg_1,\phi)=Out(\gg_1,\phi):=Der(\gg_1,\phi)/Inn(\gg_1,\phi).
\end{eqnarray*}
A $2$-cochain $\vec{\omega}$ is a $6$-tuple $(\omega_0,\alpha,\varphi,\omega_1,\lambda,v)$ where
\begin{align*}
\omega_0 & \in\bigwedge^2\hh^*\otimes V &          &                               &          &   \\
\varphi  & \in\gg_1^*\otimes V          & \alpha   & \in\hh^*\otimes\gg^*\otimes W &          &   \\
v        & \in V                        & \lambda  & \in\gg^*\otimes W             & \omega_1 & \in\bigwedge^2\gg^*\otimes W . 
\end{align*} 
Let us compute the differential using coordinates as before. First, $\nabla\vec{\omega}^{3,0}_0=\partial v=0$; for the other values,
\begin{align*}
\nabla\vec{\omega}^{0,3}_0 & =\delta\omega_0 & & & &  \\
\nabla\vec{\omega}^{1,2}_0 & =\partial\omega_0+\delta\varphi-\Delta\alpha+\Delta_2\omega_1 & \nabla\vec{\omega}^{0,2}_1 & =\delta_{(1)}\omega_0+\delta'\alpha & & \\
\nabla\vec{\omega}^{2,1}_0 & =-\partial\varphi+\delta v-\Delta\lambda & \nabla\vec{\omega}^{1,1}_1 & =-\delta_{(1)}\varphi+\partial\alpha+\delta'\lambda+\Delta\omega_1 & \nabla\vec{\omega}^{0,1}_2 & =-\delta_{(1)}\alpha+\delta^{(2)}\omega_1 \\
\nabla\vec{\omega}^{2,0}_1 & =\delta_{(1)}v-\cancelto{\lambda}{\partial\lambda} & \nabla\vec{\omega}^{1,0}_2 & =\cancelto{0}{\partial\omega_1}+\delta_{(1)}\lambda & \nabla\vec{\omega}^{0,0}_3 & =\delta_{(1)}\omega_1
\end{align*}
If we assume  $v=0$, $\lambda$ vanishes too and these equations correspond to those in the statement of proposition \ref{2-cocycles}. First, $\partial\varphi(x_0,x_1,y)=\varphi(0,y+\mu(x_1))=0$ says that $\varphi$ does not depend on $\hh$, as was supposed in the statement. Then, the equation $\nabla\vec{\omega}^{1,1}_1=0$ coincides with the definition of $\omega_1$ in the statement. Finally, for the numbering of the equations in the statement of proposition \ref{2-cocycles}:
\begin{itemize}
\item Equation $i)$ is literally $\delta\omega_0 =0$.  
\item Equation $ii)$ just said that $\omega_1$ was skew-symmetric, so we've got it already, as our $\omega_1$ is alternating by definition.
\item Equation $iii)$ said that $\omega_1$ was a cocycle, so it follows from $\nabla\vec{\omega}^{0,0}_3=0$.
\item Equation $iv)$ is equivalent to $\nabla\vec{\omega}^{1,2}_0=0$ evaluated at $\begin{pmatrix}
0 & x \\
y & 0
\end{pmatrix}\in\gg_1^2$. 
\item Equation $v)$ is exactly $\nabla\vec{\omega}^{0,2}_1=0$.
\item Equation $vi)$ is exactly $\nabla\vec{\omega}^{0,1}_2=0$.
\end{itemize}
Moreover, for a pair of $2$-cocycles 
\begin{eqnarray*}
(\vec{\omega})^k=(\omega_0^k,\alpha^k,\varphi^k,\omega_1^k,\lambda^k,v^k), & k\in\lbrace 1,2\rbrace
\end{eqnarray*}
with $v^k=0$, if they are cohomologous we recover the equations in proposition \ref{2-coboundaries}. Indeed, if
\begin{eqnarray*}
(\vec{\omega})^2-(\vec{\omega})^1=\nabla(\lambda_0,\lambda_1,v),
\end{eqnarray*}
coordinate-wise we've got
\begin{align*}
\omega_0^2-\omega_0^1 & =\delta\lambda_0                             &                   & & &                               \\
\varphi^2-\varphi^1   & =-\partial\lambda_0+\delta v-\Delta\lambda_1 & \alpha^2-\alpha^1 & =-\delta_{(1)}\lambda_0+\delta'\lambda_1 & & \\
0                     & =v                                           & 0                 & =\delta_{(1)}v                           & \omega_1^2-\omega_1^1 &  =\delta_{(1)}\lambda_1 ;
\end{align*}
thus, explicitely 
\begin{align*}
    (\varphi^2-\varphi^1)(x) & =\lambda_0(\mu(x))-\phi(\lambda_1(x)) \\
    (\alpha^2-\alpha^1)(y;x) & =-\rho_1(x)\lambda_0(y)+\rho_0^1(y)\lambda_1(x)-\lambda_1(\Lie_y x)
\end{align*}
which coincide with the referred equations, as we claimed. \\
In the end, this is the theorem this theory is dedicated to. 
\begin{theorem}\label{H2Alg}
$H^2_\nabla(\gg_1,\phi)$ classifies $2$-extensions.
\end{theorem}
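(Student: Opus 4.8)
The statement should be read relative to the fixed $2$-representation $\rho$ built into the complex $C_{tot}(\gg_1,\phi)$ of Theorem \ref{The2AlgCx}: the claim is that equivalence classes of $2$-extensions of $\gg_1$ by $\mathbb{V}=(W\xrightarrow{\phi}V)$ inducing $\rho$ are in bijection with the classes in $H^2_\nabla(\gg_1,\phi)$. Essentially all of the analytic content has already been isolated in Propositions \ref{Ind2Rep}, \ref{2-cocycles} and \ref{2-coboundaries}; the plan is to package these results correctly against the \emph{full} degree-$2$ piece of the complex, which carries the two extra slots $\lambda\in C^{1,0}_1$ and $v\in C^{2,0}_0$ that do not appear in those propositions. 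The main task is therefore to show that these slots carry no additional information.

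First I would define a map $\Phi$ from extensions to cohomology. Given an extension $\mathfrak{e}$ inducing $\rho$, choose a linear splitting $\sigma$; the computations preceding the proof of Proposition \ref{Ind2Rep} identify $\mathfrak{e}$ with the twisted semi-direct sums $\hh\,{}_{\rho^0_0}\oplus^{\omega_0}V$ and $\gg\,{}_{\rho^1_0\circ\mu}\oplus^{\omega_1}W$ and produce the triple $(\omega_0,\alpha,\varphi)$ together with $\omega_1(x_0,x_1)=\rho_1(x_1)\varphi(x_0)+\alpha(\mu(x_0);x_1)$. Because $\mathfrak{e}$ is an honest Lie $2$-algebra, the ``only if'' direction of Proposition \ref{2-cocycles} shows this data satisfies equations i)--vi), which is exactly the assertion that $\vec\omega=(\omega_0,\alpha,\varphi,\omega_1,0,0)$ is a $\nabla$-cocycle (by the coordinate description of $\nabla\vec\omega$ given at the end of the section). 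I then set $\Phi([\mathfrak{e}])=[\vec\omega]$. Well-definedness has two sources: a change of splitting and a replacement of $\mathfrak{e}$ by an equivalent extension. Both are governed by Proposition \ref{2-coboundaries}: a change of splitting is the special case in which the isomorphism $\psi$ of \eqref{anIsoOfExts} is the identity read in two coordinate systems, and an equivalence is the general case; in either situation the two cocycles differ by $\nabla(\lambda_0,\lambda_1,0)$ and hence represent the same class.

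For the inverse $\Psi$ I would start from a class $c\in H^2_\nabla(\gg_1,\phi)$ and first normalise a representative. Any cocycle $(\omega_0,\alpha,\varphi,\omega_1,\lambda,v)$ may be replaced by the cohomologous one obtained by subtracting $\nabla(0,0,v)$; since the $C^{2,0}_0$-component of this coboundary is precisely $v$, the new representative has vanishing $v$-slot, and then the bidegree-$C^{2,0}_1$ cocycle equation (which reads $\lambda=\delta_{(1)}v$) forces $\lambda=0$ as well. As $\nabla^2=0$ the result is again a cocycle, so it automatically satisfies i)--vi) and the $\hh$-independence of $\varphi$; thus every class has a representative $(\omega_0,\alpha,\varphi,\omega_1,0,0)$ of the form handled by Proposition \ref{2-cocycles}, to which its ``if'' direction associates an extension $\mathfrak{e}$, and I set $\Psi(c)=[\mathfrak{e}]$. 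Two normalised representatives of a single class differ by some $\nabla(\lambda_0,\lambda_1,v')$, and comparing $C^{2,0}_0$-components forces $v'=0$, so Proposition \ref{2-coboundaries} returns equivalent extensions and $\Psi$ is well defined.

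Finally I would check the two constructions are mutually inverse. The extension built in Proposition \ref{2-cocycles} carries the tautological splitting $x\mapsto(x,0)$, $y\mapsto(y,0)$, and reading off its structure maps returns the cocycle one started from, giving $\Phi\circ\Psi=\mathrm{id}$; conversely the identifications of $\mathfrak{e}$ with the twisted semi-direct sums show the extension reconstructed from $\Phi([\mathfrak{e}])$ is isomorphic to $\mathfrak{e}$, giving $\Psi\circ\Phi=\mathrm{id}$. The step I expect to be the genuine obstacle is precisely the normalisation argument together with the bookkeeping that the spurious slots $\lambda,v$ are redundant: one must see that $\nabla(0,0,v)$ disturbs only $\varphi$ (by $\delta v$) and $\lambda$ (by $\delta_{(1)}v$) while leaving $\omega_0,\alpha,\omega_1$ fixed, so that normalisation lands inside the locus described by Proposition \ref{2-cocycles} without altering the class. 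Everything else is a transcription of results already proved.
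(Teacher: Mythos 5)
Your proposal is correct and follows essentially the same route as the paper, which establishes the theorem by combining Propositions \ref{2-cocycles} and \ref{2-coboundaries} with the coordinate computation of $\nabla$ on $1$- and $2$-cochains. Your explicit normalisation step (subtracting $\nabla(0,0,v)$ to kill the $C^{2,0}_0$-slot and then using the cocycle equation $\lambda=\delta_{(1)}v$ to kill the $C^{1,0}_1$-slot) correctly fills in the reduction that the paper only signals with the phrase ``if we assume $v=0$, $\lambda$ vanishes too.''
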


\bibliographystyle{plain}
\bibliography{biblos.bib} 		             


\end{document}